\algnewcommand\algorithmicinput{\textbf{Input:}}
\algnewcommand\algorithmicoutput{\textbf{Output:}}
\algnewcommand\Input{\item[\algorithmicinput]}
\algnewcommand\Output{\item[\algorithmicoutput]}
\tikzstyle{vertex}=[circle, draw, inner sep=0pt, minimum size=6pt]
\tikzstyle{none}=[]
\tikzstyle{Vertex}=[fill={rgb,255: red,142; green,142; blue,142}, draw=black, shape=rectangle]
\tikzstyle{white vertex}=[fill=white, draw=black, shape=circle]
\tikzstyle{grey vertex}=[fill={rgb,255: red,142; green,142; blue,142}, draw=black, shape=circle]
\tikzstyle{black vertex}=[fill=black, draw=black, shape=circle]
\tikzstyle{small black}=[fill=black, draw=black, shape=circle, scale=.5pt]
\tikzstyle{small gray}=[fill={rgb,255: red,142; green,142; blue,142}, draw=black, shape=circle, scale=.5pt]
\tikzstyle{small white}=[fill=white, draw=black, shape=circle, scale=.5pt]
\tikzstyle{double blue edge}=[->, draw={rgb,255: red,6; green,118; blue,255}, latex-latex]
\tikzstyle{new edge style 0}=[->, draw={rgb,255: red,6; green,118; blue,255}, -latex]
\tikzstyle{Edge}=[-]
\tikzstyle{edge red}=[-, fill=none, draw=red]
\tikzstyle{blue_edge}=[-, draw={rgb,255: red,6; green,118; blue,255}]
\tikzstyle{gray_region}=[-, fill={rgb,255: red,220; green,220; blue,220}]
\newcommand{\m}[1]{}
\declaretheorem{theorem}
\declaretheorem[numberlike=theorem]{proposition}
\declaretheorem[numberlike=theorem]{corollary}
\declaretheorem[numberlike=theorem]{conjecture}
\declaretheorem[numbered=no,name=Gy\'arf\'as-Sumner conjecture]{GSconjecture}
\declaretheorem[numberlike=theorem]{lemma}
\declaretheorem{claim}
\declaretheorem[numberlike=theorem]{definition}
\declaretheorem[numberlike=theorem]{observation}
\declaretheorem[numbered=no,name=Gyárfás--Sumner Conjecture]{conjecture*}
\newcommand{\Gya}{Gy\'arf\'as\xspace}
\newcommand{\Erd}{Erd\H os\xspace}
\definecolor{darkgreen}{RGB}{1, 50, 32}
\newcommand{\mcal}[1]{\mathcal{#1}}
\newcommand{\mF}{\mathcal{F}}
\newcommand{\set}[1]{\left\{#1\right\}}
\newcommand{\ra}{\rightarrow}
\DeclareMathOperator{\Forb}{Forb}
\newcommand{\adj}[1][]{\stackrel{#1}{\sim}}
\newcommand{\F}[1]{\Forb_{ind}{#1}}
\newcommand{\indsub}{\underset{\!\!ind\!\!}{\subset}}
\title{Extension of the \Gya-Sumner conjecture to signed graphs}
\author[1]{Guillaume Aubian}
\author[2]{Allen Ibiapina}  
\author[3]{Luis Kuffner} 
\author[3,4]{Reza Naserasr}
\author[3]{\\ Cyril Pujol \Envelope}
\author[3]{Cléophée Robin}
\author[3,4]{Huan Zhou}
 \affil[1]{\small Université Paris-Panthéon-Assas, CRED Paris, France {Email: \texttt{guillaume.aubian@assas-universite.fr}}}
 \affil[2]{\small Universidade Federal do Ceará, Brazil {Email: \texttt{allen.ibiapina@alu.ufc.br}}}
\affil[3]{\small Université Paris Cité, CNRS, IRIF, F-75013, Paris, France.\\
	
	 {Emails: \texttt{\{kuffner, reza, cpujol, crobin, zhou\}@irif.fr}}}
\affil[4]{\small Zhejiang Normal University, Jinhua, China. {Email: \texttt{huanzhou@zjnu.edu.cn}}}
\begin{document}
	
	\maketitle
	%\tableofcontents
	
	\begin{abstract}
		The balanced chromatic number of a signed graph $\widehat{G}$ is the minimum number of balanced sets that cover all vertices of $G$. Studying structural conditions which implies bound on the balanced chromatic number of signed graphs then is among the most fundamental problems in graph theory. In this work, we initiate the study of coloring hereditary classes of signed graphs. More precisely we say that a set $F=\{\widehat{F_1}, \widehat{F_2}, \ldots, \widehat{F_l}\}$ is a GS (for \Gya-Sumner) set if there exists a constant $c$ such that signed graphs with no induced subgraph switching equivalent to a member of $F$ admit a balanced $c$-coloring. The focus of this work then is to study GS sets of order 2. We show that if $F$ is a GS set of order 2, then $F_1$ is either $\widehat{(K_3,-)}$ or $\widehat{(K_4,-)}$ and $F_2$ is a linear forest. In the case of $F_1=\widehat{(K_3,-)}$ we show that any choice of a linear forest for $F_2$ works. In the case of $F_1=\widehat{(K_4,-)}$ we show that if each connected component of $F_2$ is a path of length at most 4, then $\{F_1,F_2\}$ is a GS set.
		
	\end{abstract}
	
	%---------------------------------------------------------
	
	\section{Introduction}
	
	One of the key questions in the theory of proper coloring of graphs is: what structure conditions impose upper bounds on the chromatic number? After a series of constructions of triangle-free graphs of arbitrarily large chromatic number, P. \Erd \cite{Erdos59}, in one of the earliest use of probabilistic methods, proved the following. 
	\begin{theorem}\label{thm:large_girth}
		There exist graphs of arbitrarily large chromatic number and girth. 
	\end{theorem}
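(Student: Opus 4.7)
The plan is to follow \Erd's classical probabilistic argument. Fix target girth $g$ and target chromatic number $k$. I would consider the random graph $G(n,p)$ on $n$ vertices with edge probability $p = n^{\theta-1}$, where $0 < \theta < 1/g$ is a small constant to be tuned; the exponent is chosen so that short cycles are rare but independent sets are small.

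First I would estimate the expected number $X$ of cycles of length at most $g$. Since the expected number of cycles of length $i$ is bounded by $\binom{n}{i}(i-1)!\,p^i/2 \le (np)^i/2$, and $np = n^{\theta}$, the sum over $3 \le i \le g$ is $O(n^{\theta g}) = o(n)$. By Markov's inequality, with probability at least $1/2$ we have $X < n/2$. Second I would bound the independence number: set $x = \lceil (3/p)\ln n \rceil$; then the probability that a fixed $x$-set is independent is $(1-p)^{\binom{x}{2}}$, and a union bound over all $x$-sets shows $\Pr[\alpha(G) \ge x] \to 0$ as $n \to \infty$. Hence for $n$ large enough, with probability at least $1/2$ we have $\alpha(G) < x = O(n^{1-\theta}\ln n)$.

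Combining these two events (both occur simultaneously with positive probability for large $n$), I would pick a graph $G$ in which fewer than $n/2$ short cycles exist and $\alpha(G) < x$. From each such short cycle, delete one vertex; this destroys all cycles of length $\le g$ and leaves a graph $G'$ on at least $n/2$ vertices with girth $> g$. Since deleting vertices cannot increase the independence number, $\alpha(G') \le \alpha(G) < x$, and therefore
\[
\chi(G') \;\ge\; \frac{|V(G')|}{\alpha(G')} \;\ge\; \frac{n/2}{x} \;=\; \Omega\!\left(\frac{n^{\theta}}{\ln n}\right),
\]
which tends to infinity with $n$. Choosing $n$ large enough makes $\chi(G') \ge k$ while $\mathrm{girth}(G') > g$, proving the theorem.

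The main technical obstacle is calibrating the exponent $\theta$: it must be small enough that $\mathbb{E}[X] = o(n)$ so the cycle-killing step loses only half the vertices, yet large enough that $n^{\theta}/\ln n$ still diverges, so the ratio $|V(G')|/\alpha(G')$ forces arbitrarily high chromatic number. Any $\theta \in (0, 1/g)$ achieves both; the concentration bound on $\alpha(G)$ is the step where one must be a little careful, but the standard union-bound estimate above suffices.
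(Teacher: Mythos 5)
Your proof is correct and is exactly the argument the paper relies on: \Cref{thm:large_girth} is quoted from \Erd's 1959 paper, whose probabilistic deletion method (random graph $G(n,p)$ with $p=n^{\theta-1}$, Markov bound on short cycles, union bound on the independence number, then deleting one vertex per short cycle and using $\chi \geq |V|/\alpha$) is precisely what you reproduce. No gaps; the calibration $0<\theta<1/g$ you describe is the standard and sufficient choice.
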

	
	This immediately implies that if given a finite set $F$ of graphs, the class of graphs with no induced subgraph isomorphic to a graph in $F$, denoted $\F{(F)}$, has a bounded chromatic number, then $F$ must contain at least one forest. Considering the class of complete graphs, any such set $F$ must also contain a complete graph. \Gya and Sumner, independently, conjectured, in \cite{Gya75} and \cite{Sumner81}, that any such pair is enough for $\F{(F)}$ to have a bounded chromatic number: 
	
	\begin{GSconjecture}\label{conj:Gyarfash-Sumner}
		For any forest $F$ and any complete graph $K_t$ the class $\F{\{F, K_t\}}$ has a bounded chromatic number.
	\end{GSconjecture}

	The goal of this work is to consider potential extensions of this conjecture and related results to the class of signed graphs. To this end, we first introduce the notions and concepts.
	
	\subsection{Definitions and Notations}
	
	A \emph{signed graph} is a pair $(G,\sigma)$ where $G$ is a graph and $\sigma: E(G)\to \{+,-\}$ is a mapping that assigns to each edge one of the two signs: positive or negative. If $\Sigma$ is the set of negative edges, then $(G, \sigma)$ can be equivalently presented as $(G,\Sigma)$. When $\sigma$ is of little importance, we may write $\widehat{G}$ in the place of $(G,\sigma)$. The subgraph $(V(G), \Sigma)$ of $\widehat{G}$ is denoted by $\widehat{G}^{-}$.

	%On the other hand, for a signed graph $(G, \sigma)$ the subgraph $(V(G), \Sigma)$ will be denoted by $\widehat{G}_{-}$. \Cleo{ The notation $G^\sigma_{-}$ is strange as we did not define $G^\sigma$. Do you think that $(G_{-},\sigma$ could be more consistent ?  }
	
	The graph $G$ is the \textit{underlying graph} of $\widehat{G}$. A pair of parallel edges of different signs is called a {\it digon}. However, unless especially mentioned, in the rest of this work, we only consider signed simple graphs.
	
	\vspace{2ex}
	
	{\it Switching} a vertex $v$ of $(G,\sigma)$ is to multiply the sign of each edge incident to $v$ by $-$. Observe that switching is an involution, and that the order doesn't matter when switching multiple vertices. Therefore, we may switch a set of vertices, meaning that we switch all of the vertices of the set in an arbitrary order. If $(G,\sigma')$ is obtained from $(G,\sigma)$ by switching some vertices then we say they are {\it switching equivalent} (see \Cref{fig:switch}).
	
	The \textit{sign of a structure} $W$ in $(G, \sigma)$, denoted $\sigma(W)$, is the product of the signs of its edges, considering multiplicity. It is immediate that the sign of a closed walk, and in particular a cycle, will not change after switching.

	A signed graph $\widehat{G}$ is said to be \textit{balanced} if every cycle of it is positive. 
	Hence, we set the following definition: 
	
	\begin{definition}
		A \textit{balanced set} of a sign graph $\widehat{G}$ is a subset of vertices $X\subseteq V(\widehat{G})$ such that every cycle in $\widehat{G}[X]$ is positive. \end{definition}
	
	The signed graph on $G$ where all edges are negative (respectively, positive) is denoted by $(G, -)$ (respectively, $(G,+)$). A signed graph switching equivalent to $(G, -)$ will be denoted by $\widehat{(G, -)}$.
	
	\begin{figure}[h]
		\centering
		\begin{tikzpicture}[scale = .5]
	\begin{pgfonlayer}{nodelayer}
		\node [style=small white] (0) at (-5.75, 0.75) {};
		\node [style=small white] (1) at (-4.25, 0.75) {};
		\node [style=small white] (2) at (-5.75, -0.75) {};
		\node [style=small white] (3) at (-4.25, -0.75) {};
		\node [style=small white] (4) at (-7, 2) {};
		\node [style=small white] (5) at (-3, 2) {};
		\node [style=small white] (6) at (-3, -2) {};
		\node [style=small white] (7) at (-7, -2) {};
		\node [style=small white] (8) at (-0.75, 0.75) {};
		\node [style=small black] (9) at (0.75, 0.75) {};
		\node [style=small white] (10) at (-0.75, -0.75) {};
		\node [style=small white] (11) at (0.75, -0.75) {};
		\node [style=small black] (12) at (-2, 2) {};
		\node [style=small black] (13) at (2, 2) {};
		\node [style=small black] (14) at (2, -2) {};
		\node [style=small white] (15) at (-2, -2) {};
		\node [style=small white] (16) at (4.25, 0.75) {};
		\node [style=small black] (17) at (5.75, 0.75) {};
		\node [style=small black] (18) at (4.25, -0.75) {};
		\node [style=small white] (19) at (5.75, -0.75) {};
		\node [style=small black] (20) at (3, 2) {};
		\node [style=small white] (21) at (7, 2) {};
		\node [style=small black] (22) at (7, -2) {};
		\node [style=small white] (23) at (3, -2) {};
	\end{pgfonlayer}
	\begin{pgfonlayer}{edgelayer}
		\draw [style={blue_edge}] (4) to (0);
		\draw [style={blue_edge}] (0) to (2);
		\draw [style={blue_edge}] (2) to (3);
		\draw [style={blue_edge}] (3) to (1);
		\draw [style={blue_edge}] (1) to (0);
		\draw [style={blue_edge}] (4) to (5);
		\draw [style={blue_edge}] (5) to (1);
		\draw [style={blue_edge}] (3) to (6);
		\draw [style={blue_edge}] (6) to (5);
		\draw [style={blue_edge}] (4) to (7);
		\draw [style={blue_edge}] (7) to (2);
		\draw [style={blue_edge}] (7) to (6);
		\draw [style=edge red] (12) to (8);
		\draw [style={blue_edge}] (8) to (10);
		\draw [style={blue_edge}] (10) to (11);
		\draw [style=edge red] (11) to (9);
		\draw [style=edge red] (9) to (8);
		\draw [style={blue_edge}] (12) to (13);
		\draw [style={blue_edge}] (13) to (9);
		\draw [style=edge red] (11) to (14);
		\draw [style={blue_edge}] (14) to (13);
		\draw [style=edge red] (12) to (15);
		\draw [style={blue_edge}] (15) to (10);
		\draw [style=edge red] (15) to (14);
		\draw [style=edge red] (20) to (16);
		\draw [style=edge red] (16) to (18);
		\draw [style=edge red] (18) to (19);
		\draw [style=edge red] (19) to (17);
		\draw [style=edge red] (17) to (16);
		\draw [style=edge red] (20) to (21);
		\draw [style=edge red] (21) to (17);
		\draw [style=edge red] (19) to (22);
		\draw [style=edge red] (22) to (21);
		\draw [style=edge red] (20) to (23);
		\draw [style=edge red] (23) to (18);
		\draw [style=edge red] (23) to (22);
	\end{pgfonlayer}
\end{tikzpicture} 
		\caption{$(Q_3,+) \simeq (Q_3,\sigma) \simeq (Q_3,-)$}
		Switching black vertices in $(Q_3,\sigma)$, or in $(Q_3,-)$, results in $(Q_3,+)$.
		\label{fig:switch}
	\end{figure}
	
	The following lemma of Zaslavsky \cite{zaslavsky82}, extending a special case, first proven by Harary \cite{Harary53}, characterizes switching equivalent classes of signed graphs. 
	
	\begin{lemma}\label{lem:Zaslavsky}
		Two signed graphs $(G, \sigma)$ and $(G,\sigma')$ are switching equivalent ($\simeq$) if and only if $\sigma(C)=\sigma(C')$ for every cycle $C$ of $G$. 
	\end{lemma}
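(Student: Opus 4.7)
The plan is to prove the two directions separately, with the forward implication being essentially immediate from the definition of switching and the backward implication resting on a cycle-space / edge-cut argument.

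For the forward direction, I would argue that the sign of any cycle $C$ is invariant under the switching operation. It suffices to check this for switching at a single vertex $v$, since any switching equivalence is a finite composition of single-vertex switches. If $v \notin V(C)$, then no edge of $C$ has its sign changed. If $v \in V(C)$, then exactly two edges of $C$ are incident to $v$, so the product of signs along $C$ is multiplied by $(-1)^2 = +1$. Hence $\sigma(C) = \sigma'(C)$ for every cycle $C$.

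For the backward direction, assume $\sigma(C) = \sigma'(C)$ for every cycle $C$ of $G$, and let
\[
F = \{\, e \in E(G) : \sigma(e) \neq \sigma'(e)\,\}.
\]
The goal is to exhibit a set $S \subseteq V(G)$ such that switching $S$ in $(G,\sigma)$ yields $(G,\sigma')$; equivalently, $F$ is the edge cut $\delta(S)$. I may assume $G$ is connected, since the problem decomposes over connected components. The key observation is that for every cycle $C$, comparing $\sigma(C)$ and $\sigma'(C)$ shows that the number of edges of $C$ lying in $F$ must be even. I then fix a spanning tree $T$ of $G$ and a root $r$, and define
\[
S = \{\, v \in V(G) : \text{the unique $r$--$v$ path in $T$ contains an odd number of edges of $F$}\,\}.
\]

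It remains to verify that $\delta(S) = F$. For an edge $e = uv$ of $T$, this is immediate from the definition of $S$ by comparing the parities of the tree paths from $r$ to $u$ and from $r$ to $v$. For a non-tree edge $e = uv$, the path in $T$ from $u$ to $v$ together with $e$ forms a cycle $C_e$; the even-parity condition on $|E(C_e) \cap F|$ forces $e \in F$ if and only if the $r$--$u$ and $r$--$v$ tree paths have different $F$-parities, which is exactly the condition $e \in \delta(S)$. The main (and only real) obstacle is this last verification, i.e.\ checking that $F$ indeed behaves as an edge cut; everything else is bookkeeping. Once $F = \delta(S)$ is established, switching precisely the vertices of $S$ turns $\sigma$ into $\sigma'$, completing the proof.
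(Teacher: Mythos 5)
Your proof is correct. Note that the paper does not prove this lemma at all: it is quoted from Zaslavsky (and, for the balanced case, Harary) as a known characterization of switching classes, so there is no in-paper argument to compare against. Your write-up is the standard proof of that cited result: the forward direction via the observation that a cycle through a switched vertex uses exactly two incident edges, and the converse by showing that the disagreement set $F=\{e:\sigma(e)\neq\sigma'(e)\}$ meets every cycle in an even number of edges and is therefore an edge cut $\delta(S)$, obtained from the $F$-parity of root-to-vertex paths in a spanning tree; switching $S$ then carries $\sigma$ to $\sigma'$. Both directions are verified correctly (including the tree-edge and non-tree-edge cases), and reducing to connected components is legitimate, so the argument is complete. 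One cosmetic remark: the lemma's statement has a typo ($\sigma(C)=\sigma(C')$ should read $\sigma(C)=\sigma'(C)$), which you interpreted in the intended way.
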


	\begin{lemma} \label{lem:balanced}
		The following are equivalent:
		\begin{itemize}
			\item $\widehat{G}$ is balanced,
			\item $\widehat{G}$ is switching equivalent to $(G,+)$,% also denoted by $(G, \emptyset)$,
			\item the negative edges of $\widehat{G}$ form an edge-cut $(X, \overline{X})$ of $G$.
		\end{itemize}
	\end{lemma}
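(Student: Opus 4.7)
The plan is to establish the equivalence by showing $(1) \Leftrightarrow (2)$ via Zaslavsky's lemma, and $(2) \Leftrightarrow (3)$ by directly tracking how the switching operation acts on individual edges. Since Lemma \ref{lem:Zaslavsky} is already available, the implications involving balance become almost immediate; the remaining work is a careful bookkeeping of signs under switching.

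For $(1) \Leftrightarrow (2)$: The all-positive signature $(G,+)$ assigns sign $+$ to every cycle by definition. Hence, saying that every cycle of $\widehat{G}$ is positive is exactly the condition $\sigma(C) = \sigma_+(C)$ for every cycle $C$, where $\sigma_+$ denotes the all-positive signature. Applying Lemma \ref{lem:Zaslavsky} to the pair $\widehat{G}$ and $(G,+)$ yields $\widehat{G} \simeq (G,+)$ if and only if $\widehat{G}$ is balanced.

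For $(2) \Rightarrow (3)$: Suppose $(G,+)$ is obtained from $\widehat{G}$ by switching a set $X \subseteq V(G)$. I would observe that switching $X$ multiplies the sign of an edge $uv$ by $-$ precisely when exactly one of $u,v$ lies in $X$ (both endpoints contribute a factor $-$ if both are in $X$, leaving the sign unchanged). Since the result is all positive, the edges whose sign got flipped are exactly the negative edges of $\widehat{G}$; these are exactly the edges of the cut $(X, \overline{X})$.

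For $(3) \Rightarrow (2)$: Conversely, if the negative edges form an edge-cut $(X, \overline{X})$, then switching the vertex set $X$ flips the sign of precisely the edges with one endpoint in $X$ and one in $\overline{X}$, which are exactly the negative edges. All other edges are already positive and remain so, so we obtain $(G,+)$, giving $\widehat{G} \simeq (G,+)$.

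There is no real obstacle: the only subtlety is the parity argument showing that switching $X$ affects exactly the edges of the cut $(X,\overline{X})$, and this is the heart of why balance, switch-equivalence to $(G,+)$, and ``negative edges form a cut'' are really the same condition.
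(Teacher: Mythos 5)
Your proof is correct. The paper itself states this lemma without proof (it is the classical balance theorem of Harary, with the switching formulation following from Zaslavsky's Lemma~\ref{lem:Zaslavsky}), so there is no in-paper argument to compare against; your route --- deducing $(1)\Leftrightarrow(2)$ from Lemma~\ref{lem:Zaslavsky} applied to $\widehat{G}$ and $(G,+)$, and handling $(2)\Leftrightarrow(3)$ by the observation that switching a set $X$ flips exactly the signs of the edges of the cut $(X,\overline{X})$ --- is the standard complete proof of this fact. The only point worth making explicit is the degenerate case: when $\widehat{G}$ has no negative edges, the ``edge-cut'' in item (3) is the trivial cut $(\emptyset,V)$, which is consistent with your argument and with the intended reading of the statement.
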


	\vspace{2ex}
	We then have a notion of balanced chromatic number for signed graphs. 
	
	\begin{definition}
		The balanced chromatic number of a signed graph $\widehat{G}$, denoted $\chi_b(\widehat{G})$, is the minimum number of balanced sets needed to cover $V(\widehat{G})$.
	\end{definition}
	
	This parameter was first introduced by Zaslavsky \cite{Zaslavsky87} under the name of \textit{balanced partition number}.
	Zaslavsky has also introduced the notion of \textit{$0$-free $p$-coloring of signed graphs}. That is, a coloring $c$ of $V(G)$ with colors from the set $\set{\pm1, \pm2, \ldots, \pm p}$ such that $c(x) \neq \sigma(xy)c(y)$ for each edge $xy$ of $(G,\sigma)$. It can be easily verified that a balanced $p$-coloring of $(G,\sigma)$ is equivalent to a $0$-free $p$-coloring of $(G,-\sigma)$. This can also be viewed as a homomorphism to the signed graph on $p$ vertices where there are both positive and negative edge between each pair of vertices and there is positive loop on each vertex. For more on homomorphisms of signed graphs we refer to \cite{NRS15}.

	The notion naturally extends to a family $\mathcal{G}$ of signed graphs by
	$$\chi_b(\mathcal{G}) = \max\limits_{\widehat{G} \in \mathcal{G}} \chi_b(\widehat{G}),$$ where  ${\chi_b(\mathcal{G}) =\infty}$ if the maximum does not exit.  
	
	\vspace{2ex}
	
	From here on, when we refer to a coloring of a graph, it will be a proper coloring. A coloring of a signed graph, on the other hand, will be a balanced coloring, which could be far from a proper coloring of the underlying graph.
	However, there is a tight relation between the balanced chromatic number of a signed graph and the chromatic number of its negative subgraph \cite{Zaslavsky87}:
	
	\begin{lemma}\label{lem:no_switch}
		$\chi_b(\widehat{G}) \leq \chi(\widehat{G}^{-}) \leq 2\chi_b(\widehat{G})$
	\end{lemma}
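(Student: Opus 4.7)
The plan is to prove the two inequalities separately, both by direct translation between the two notions using the characterization of balanced graphs given in \Cref{lem:balanced}.

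For the left inequality $\chi_b(\widehat{G}) \leq \chi(\widehat{G}^{-})$, I would start from a proper $k$-coloring of $\widehat{G}^{-}$ with $k = \chi(\widehat{G}^{-})$. Each color class $V_i$ is an independent set of $\widehat{G}^{-}$, which means that $\widehat{G}[V_i]$ contains no negative edge at all. In particular, every cycle of $\widehat{G}[V_i]$ uses only positive edges and is therefore positive, so $V_i$ is a balanced set. The $k$ color classes thus witness $\chi_b(\widehat{G}) \leq k$.

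For the right inequality $\chi(\widehat{G}^{-}) \leq 2\chi_b(\widehat{G})$, I would take a balanced $k$-coloring of $\widehat{G}$ with $k = \chi_b(\widehat{G})$, giving a partition $V_1, \ldots, V_k$ of $V(G)$ into balanced sets. By \Cref{lem:balanced} applied to $\widehat{G}[V_i]$, the negative edges of $\widehat{G}[V_i]$ form an edge-cut $(X_i, \overline{X_i})$ of $G[V_i]$. Consequently, neither $X_i$ nor $\overline{X_i}$ contains any negative edge of $\widehat{G}$, so each of these $2k$ sets is an independent set in $\widehat{G}^{-}$. Declaring the pair $(X_i, \overline{X_i})$ to be two new color classes for each $i$ produces a proper coloring of $\widehat{G}^{-}$ using at most $2k$ colors.

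There is no genuine obstacle here: both directions are immediate once one uses the equivalence, from \Cref{lem:balanced}, between being balanced and having the negative edges form an edge-cut. The only tiny point to be careful about is to observe that the second direction cannot generally be improved to $\chi(\widehat{G}^{-}) \leq \chi_b(\widehat{G})$, since $X_i$ and $\overline{X_i}$ may be joined by positive edges in $\widehat{G}$ but this does not interfere with the coloring of $\widehat{G}^{-}$; the factor $2$ is therefore essentially sharp, as witnessed for instance by $(K_{n,n},+)$ viewed as a signed graph with empty negative subgraph.
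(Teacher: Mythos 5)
Your proof is correct and follows essentially the same route as the paper: independent sets of $\widehat{G}^{-}$ are balanced (no negative edges at all), and conversely each balanced class splits into two negative-independent parts via the edge-cut characterization of \Cref{lem:balanced}, giving the factor $2$. One small aside: your sharpness example $(K_{n,n},+)$ has an empty negative subgraph, so $\chi(\widehat{G}^{-})=1=\chi_b$ there and it does not witness the factor $2$; a correct witness is, e.g., $(K_2,-)$, where $\chi_b=1$ but $\chi(\widehat{G}^{-})=2$.
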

	\begin{proof}
		Let $c = \chi_b(\widehat G)$ and consider a partition of $\widehat{G}$ into $c$ balanced subgraphs: $(\widehat{G}_{i})_{i\leq c}$. By \Cref{lem:balanced}, in each $\widehat{G}_i$, all negative edges form an edge cut. In other words, $\widehat{G}_i^-$ is $2$-colorable. Thus, $\widehat{G}^{-}$ is $2c$-colorable.
		
		Conversely, any partition of $G_\sigma^-$ into independent sets induces a partition of $\widehat{G}$ into balanced sets. Therefore, $\chi_b(\widehat{G}) \leq \chi(\widehat{G}^{-})$.
	\end{proof}
	
	This directly implies the following:
	
	\begin{corollary}\label{cor:link_signed_unsigned}
		A family $\mathcal{G}$ of signed graphs has bounded balanced chromatic number if and only if ${\{\widehat{G}^{-} \mid \widehat{G} \in \mathcal{G}\}}$ has bounded chromatic number.
	\end{corollary}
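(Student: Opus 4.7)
The plan is to derive this corollary directly from the double inequality in Lemma~\ref{lem:no_switch}, so the proof is essentially a one-line argument unpacking the definition of a bounded parameter over a family.

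First I would suppose that $\mathcal{G}$ has bounded balanced chromatic number, say $\chi_b(\widehat{G})\le c$ for every $\widehat{G}\in\mathcal{G}$. By the right-hand inequality of Lemma~\ref{lem:no_switch}, $\chi(\widehat{G}^{-})\le 2\chi_b(\widehat{G})\le 2c$ for every $\widehat{G}\in\mathcal{G}$, hence the family $\{\widehat{G}^{-}\mid \widehat{G}\in\mathcal{G}\}$ has chromatic number bounded by $2c$.

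Conversely, suppose $\{\widehat{G}^{-}\mid \widehat{G}\in\mathcal{G}\}$ has chromatic number bounded by some constant $k$. By the left-hand inequality of Lemma~\ref{lem:no_switch}, $\chi_b(\widehat{G})\le \chi(\widehat{G}^{-})\le k$ for every $\widehat{G}\in\mathcal{G}$, so $\chi_b(\mathcal{G})\le k$.

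There is no real obstacle here: both directions reduce immediately to the sandwich from Lemma~\ref{lem:no_switch}, and the only thing to be careful about is the quantifier over the family, i.e.\ that a uniform bound on one parameter transfers (up to a factor of~$2$) to a uniform bound on the other. No case analysis, construction, or probabilistic argument is needed.
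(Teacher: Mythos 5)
Your proof is correct and follows exactly the route the paper intends: the paper states the corollary as a direct consequence of Lemma~\ref{lem:no_switch}, and your argument simply unpacks the two sides of that sandwich inequality with the uniform bound over the family, which is all that is needed.
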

	
	\vspace{2ex}
	\begin{definition}
		A signed graph $\widehat{H}$ is said to be an induced subgraph of a signed graph $\widehat{G}$, denoted $\widehat{H} \indsub \widehat{G}$, if $\widehat{H}$ is isomorphic to a subgraph of $\widehat{G}$ obtained from $\widehat{G}$ by applying the following (commutative) operations:
		\begin{itemize}
			\item removal of a vertex (and all edges incident to it),
			\item switching at vertex.
		\end{itemize}
	\end{definition}
	
	For instance, in \Cref{fig:switch} one can see that $(C_6,-) \indsub (Q_3,+)$.

	A family $\mathcal{G}$ of signed graphs is said to be \textit{hereditary} if it is closed under taking induced subgraphs. Given a family $\mathcal{F}$ of signed graphs, we denote by $\F(\mF)$ the class $\{\widehat{G} \mid \forall \widehat{H} \in \mathcal{F}, \widehat H \not\indsub \widehat{G}\}$. Note that $\F(\mathcal{F})$ is hereditary.
	
	In this paper, we sometimes need to work with signed graphs where switching will not be considered. In such cases, considering negative edges as red and positive edges as blue, we rather refer to the signed graph in hand as a 2-edge-colored graph. We may also prefer to work with the underlying graph of a signed graph in certain cases (where forbidding a subgraph is equivalent to forbidding all possible signatures). To capture the three notions together (graphs, 2-edge-colored graphs and signed graphs) we adopt the following notation. Given $\mF=\set{F_1, (F_2, \pi), \widehat{F}_3}$, where $F_1$ is a graph, $(F_2,\pi)$ a 2-edge-colored graph (with $\pi$ the edge coloring function) and $\widehat{F}_3$ is a signed graph, the class $\Forb_{ind}(\mF)$ is the class of signed graphs $(G, \sigma)$ where no induced subgraph of $G$ is isomorphic to $F_1$, no induced subgraph of $(G, \sigma)$ (as a 2-edge-colored graph) is isomorphic to $(F_2, \pi)$ and no induced subgraph of $(G, \sigma)$ is isomorphic to $\widehat{F}_{3}$ (where switching is permitted). In \Cref{fig:forb} we have an example of $(K_5, \sigma)$ which is in  $\F\{P_3,(K_3,+),\widehat{(K_4,-)}\}$.
	
	\begin{figure}[h]
		\centering
		\begin{tikzpicture}[scale = .5]
	\begin{pgfonlayer}{nodelayer}
		\node [style=small white] (0) at (0, 2.75) {};
		\node [style=small white] (1) at (-2.25, 1) {};
		\node [style=small white] (2) at (2.25, 1) {};
		\node [style=small white] (3) at (-1.5, -1.75) {};
		\node [style=small white] (4) at (1.5, -1.75) {};
		\node [style=small white] (5) at (-7.25, 1.25) {};
		\node [style=small white] (6) at (-6, 3.5) {};
		\node [style=small white] (7) at (-4.75, 1.25) {};
		\node [style=small white] (9) at (-6.75, -1) {};
		\node [style=small white] (10) at (-5.25, -1) {};
		\node [style=small white] (11) at (-6, 0.25) {};
		\node [style=small white] (12) at (-6, -2) {};
		\node [style=small white] (13) at (-4.75, -2) {};
		\node [style=small white] (14) at (-7.25, -2) {};
		\node [style=small white] (15) at (-6, 2) {};
	\end{pgfonlayer}
	\begin{pgfonlayer}{edgelayer}
		\draw [style=edge red] (0) to (2);
		\draw [style=edge red] (2) to (4);
		\draw [style=edge red] (4) to (3);
		\draw [style=edge red] (3) to (1);
		\draw [style=edge red] (1) to (0);
		\draw [style={blue_edge}] (0) to (3);
		\draw [style={blue_edge}] (3) to (2);
		\draw [style={blue_edge}] (2) to (1);
		\draw [style={blue_edge}] (0) to (4);
		\draw [style={blue_edge}] (4) to (1);
		\draw [style=edge red] (6) to (5);
		\draw [style=edge red] (5) to (7);
		\draw [style=edge red] (7) to (6);
		\draw [style={blue_edge}] (9) to (11);
		\draw [style={blue_edge}] (11) to (10);
		\draw [style={blue_edge}] (10) to (9);
		\draw (14) to (12);
		\draw (12) to (13);
		\draw [style=edge red] (6) to (15);
		\draw [style=edge red] (15) to (7);
		\draw [style=edge red] (15) to (5);
	\end{pgfonlayer}
\end{tikzpicture}  
		\caption{A graph in $\F\{P_3,(K_3,+),\widehat{(K_4,-)}\}$}
		\label{fig:forb}
	\end{figure}

	\begin{definition}
		A finite set $\mathcal{F}$ of signed graphs is a Gyárfás-Sumner set (or $GS$ set) whenever $\chi_{b}(\F(\mathcal{F}))$ is bounded.
	\end{definition}
	
	By \Cref{lem:no_switch}, to determine if $\{\chi_{b}(\widehat{G})\mid \widehat{G} \in \mathcal{G}\}$ is bounded it would be enough to consider the chromatic number of the family $\{\widehat{G}^{-} \mid \widehat{G} \in \mathcal{G}\}$ where $\widehat{G}^{-}$ is chosen for an arbitrary signature among all signatures equivalent to the signature in $\widehat{G}$. 
	
	The study can be compared to the notion of GS sets for dichromatic number of digraphs introduced in \cite{ACN21}. For more recent study on this subject, see \cite{AACT} and references therein. For dichromatic number the vertices of a digraph are to be partitioned into sets none of which induces a directed cycle. While the two notions of dichromatic number and balanced chromatic number are quite similar, there are two essential differences between the two. The first is that being balanced is more restrictive for a color class in signed graphs than being acyclic is in digraphs. That is because among all orientations of a cycle only two are directed, while among all assignments of signs half are unbalanced. The second reason is that balanced coloring is arguably more suitable than even proper coloring (of graphs) to study the connection between minor theory and coloring. This is one of the main motivations for studying the balanced chromatic number. For further comments on this connection see \cite{JMNNQ25}. 
	
	\section{Signed \Gya-Sumner sets}
	
	Given a graph $G$, the signed graph obtained from $G$ by replacing each edge with a digon is denoted by $\widetilde{G}$.
	Independent sets in $G$ correspond to balanced sets in $\widetilde{G}$. Hence, $\chi_{b}(\widetilde{G})=\chi(G)$. Furthermore, observe that $\set{\widetilde{G}\mid G \text{ is a graph}} = \F {\left\{\widehat{(K_2,-)}\right\}}$.
	Thus, the \Gya-Sumner conjecture can be restated as follows. 
	
	\begin{conjecture} 
		For any forest $F$ and any complete graph $K_t$ the set $X=\{\widehat{(K_2,-)}, \widetilde{K}_t,\widetilde{F} \}$ is a $GS$ set.
	\end{conjecture}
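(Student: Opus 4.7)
The plan is to prove that $X$ is a GS set directly by exhibiting a uniform constant bound on $\chi_b(\widehat{H})$ for every $\widehat{H} \in \Forb_{ind}(X)$. The bound will be the Gy\'arf\'as--Sumner constant $c(F,t)$ attached to the pair $(F,K_t)$, transported from the underlying graph of $\widehat{H}$ back to $\widehat{H}$ itself via the digon normal form.

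First I would show that every $\widehat{H}\in\Forb_{ind}(X)$ is of the form $\widetilde{G}$ for some simple graph $G$. The paper already records the identity $\set{\widetilde{G}\mid G \text{ is a graph}}=\Forb_{ind}\{\widehat{(K_2,-)}\}$: since $K_2$ has no cycles, any single edge is switching--equivalent to $(K_2,-)$, so avoiding $\widehat{(K_2,-)}$ as an induced signed subgraph forces every adjacency of $\widehat{H}$ to be a full digon, whence $\widehat{H}=\widetilde{G}$ for a unique simple graph $G$. I would then translate the two remaining forbidden patterns into conditions on $G$: because every adjacency of $\widetilde{G}$ is already a digon and both $\widetilde{K_t}$ and $\widetilde{F}$ are built entirely of digons, an induced copy of $\widetilde{K_t}$ in $\widetilde{G}$ exists if and only if $G$ contains an induced $K_t$, and the same holds for $\widetilde{F}$ and $F$. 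Consequently $G \in \Forb_{ind}\{F,K_t\}$ in the classical graph--theoretic sense.

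The second step is to invoke the Gy\'arf\'as--Sumner conjecture on the pair $(F,K_t)$ to supply a constant $c = c(F,t)$ with $\chi(G)\le c$. Combining this with the identity $\chi_b(\widetilde{G})=\chi(G)$ recorded in the paragraph immediately preceding the statement---balanced sets in $\widetilde{G}$ are exactly the independent sets of $G$, since every digon is already a negative $2$--cycle---gives $\chi_b(\widehat{H}) = \chi(G)\le c$ uniformly over $\widehat{H}\in\Forb_{ind}(X)$. This is precisely the definition of $X$ being a GS set, with explicit constant $c(F,t)$.

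The sole obstacle in this plan is the appeal to Gy\'arf\'as--Sumner itself. Since that conjecture is currently open, the argument as written only delivers an unconditional bound for those forests $F$ for which Gy\'arf\'as--Sumner has already been established (paths, subdivided stars, brooms, and various small trees); the structural reduction in the first two paragraphs is completely rigorous for every $F$, so any future progress on the classical conjecture automatically transfers to the signed setting through this argument. Any genuinely new technique would also have to confront the fact that the map $\widehat{H}\mapsto G$ above erases the sign information entirely, so within the class $\Forb_{ind}(X)$ the signed--graph structure provides no extra leverage beyond the classical setting.
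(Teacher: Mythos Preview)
Your reduction is exactly what the paper itself does in the paragraph immediately preceding the statement: it records that $\{\widetilde{G}\mid G\text{ a graph}\}=\Forb_{ind}\{\widehat{(K_2,-)}\}$ and that $\chi_b(\widetilde{G})=\chi(G)$, and then presents this conjecture explicitly as a \emph{restatement} of the classical Gy\'arf\'as--Sumner conjecture, not as a result with a proof. Your write-up fleshes out the translation of the two remaining forbidden patterns $\widetilde{K}_t$ and $\widetilde{F}$ into ordinary induced-subgraph constraints on $G$, which the paper leaves implicit, and you correctly flag that the only obstruction to an unconditional proof is the open status of Gy\'arf\'as--Sumner itself; there is nothing further to compare, since the paper offers no proof of this conjecture.
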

	
	In other words, the claim is that any minimal $GS$ set containing $\widehat{(K_2, -)}$ is of order three except for the trivial cases: $\mathcal{F}=\{\widehat{(K_2,-)}, \widetilde{K}_2 \}$ and $\mathcal{F}=\{K_1 \}$. 
	Our question can be restated as finding minimal $GS$ sets including $\widetilde{K}_2$, but for simplicity we will not repeat this element, and only consider simple graphs. Toward characterizing such sets, the following is a key observation.  
	
	\begin{observation}
		If $\mathcal{G}$ is a hereditary class of signed graphs such that $\chi_b(\mathcal{G})$ is unbounded, then any $GS$ set must contain a signed graph that is switching equivalent to a member of $\mathcal{G}$.
	\end{observation}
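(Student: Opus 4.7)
The plan is to prove the contrapositive: assume $\mathcal{F}$ is a $GS$ set such that no element of $\mathcal{F}$ is switching equivalent to a member of $\mathcal{G}$, and deduce that $\chi_b(\F(\mathcal{F}))$ is bounded cannot coexist with $\chi_b(\mathcal{G})$ being unbounded. The key step is to show, under this assumption, the inclusion $\mathcal{G} \subseteq \F(\mathcal{F})$.

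To establish the inclusion, I would pick an arbitrary $\widehat{G} \in \mathcal{G}$ and argue that no $\widehat{H} \in \mathcal{F}$ satisfies $\widehat{H} \indsub \widehat{G}$. Indeed, if such an $\widehat{H}$ existed, then by the definition of $\indsub$ the signed graph $\widehat{H}$ would be isomorphic to a signed graph obtained from $\widehat{G}$ by switchings and vertex deletions. Since $\mathcal{G}$ is hereditary, and since hereditariness as defined in the excerpt is closure under exactly these operations (switching and vertex removal), the resulting signed graph lies in $\mathcal{G}$. Hence $\widehat{H}$ is isomorphic to, and in particular switching equivalent to, a member of $\mathcal{G}$, contradicting our standing assumption on $\mathcal{F}$.

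Having shown $\mathcal{G} \subseteq \F(\mathcal{F})$, I would conclude by monotonicity of $\chi_b$ over families: $\chi_b(\F(\mathcal{F})) \geq \chi_b(\mathcal{G}) = \infty$, which contradicts $\mathcal{F}$ being a $GS$ set.

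There is no real obstacle here: the entire content of the observation is that the hereditary property of $\mathcal{G}$ is compatible with $\indsub$ (which already bakes in switching), so forbidding induced subgraphs from $\mathcal{F}$ cannot exclude any member of $\mathcal{G}$ unless $\mathcal{F}$ itself reaches into $\mathcal{G}$ up to switching equivalence. The only point to double-check is the matching between the definition of $\indsub$ and the closure operations implied by "hereditary" for signed graphs, which is immediate from the definitions given earlier in the excerpt.
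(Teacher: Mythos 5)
Your proof is correct and is exactly the immediate argument the paper leaves implicit (the statement is given as an observation without proof): hereditariness of $\mathcal{G}$, with switching built into $\indsub$, gives $\mathcal{G}\subseteq\F(\mathcal{F})$ whenever $\mathcal{F}$ avoids $\mathcal{G}$ up to switching, so $\chi_b(\F(\mathcal{F}))$ would be unbounded. The only wording to tighten is ``isomorphic to, and in particular switching equivalent to, a member of $\mathcal{G}$'': isomorphism does not literally imply switching equivalence, but since the families in question are closed under isomorphism this is harmless.
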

	
	Basic families of hereditary signed simple graphs with unbounded balanced chromatic number are the followings.
	
	\begin{itemize}
		\item $\mathcal{K}= \set{(K_i, -) \mid i\geq 1}$.
		\item $\mathcal{G}_{k}= \set{(G, -) \mid \text{$G$ has girth at least $k$}}$. 
		\item  $PC(\mathcal{G}_{k})= \set{ (K_{|V(G)|}, E(G)) \mid \text{$G$ has girth at least $k$}}$. 
	\end{itemize}  
	The first two classes are cliques and graphs of high girth. The third class is obtained from the second by replacing all non-edges with positive edges. We call this operation the \textit{positive completion} (denoted $PC(\bullet)$). $PC(C_5)$ is drawn in \Cref{fig:forb} 
	
	\begin{lemma}
		The classes of signed graphs $\mathcal{K}$, $\mathcal{G}_{k}$ and $PC(\mathcal{G}_k)$ have unbounded balanced chromatic number.
	\end{lemma}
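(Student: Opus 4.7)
The approach is to prove unboundedness for each of the three classes separately, in each case exhibiting structural constraints on balanced subsets strong enough to force arbitrarily many colors. The first two cases are one-line consequences of earlier results in the excerpt; the third requires a small structural argument.

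For $\mathcal{K}$, I would start from the observation that every triangle in $(K_n,-)$ carries sign $(-)^{3}=-$ and is therefore unbalanced, so any balanced subset has at most two vertices. This forces $\chi_b((K_n,-))\ge \lceil n/2\rceil$, which is unbounded as $n\to\infty$. (Equivalently, by \Cref{lem:balanced}, the negative edges of $(K_m,-)$ form an edge cut of $K_m$ only when $m\le 2$.)

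For $\mathcal{G}_k$, the plan is simply to combine \Cref{lem:no_switch} with \Cref{thm:large_girth}: since $(G,-)^{-}=G$, \Cref{lem:no_switch} gives $\chi_b((G,-))\ge \chi(G)/2$, and \Cref{thm:large_girth} supplies, for any fixed $k$, graphs $G$ of girth at least $k$ with chromatic number as large as desired.

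For $PC(\mathcal{G}_k)$, which is the case I expect to require the most care, the argument hinges on identifying the structure of balanced subsets via \Cref{lem:balanced}. If $X$ is balanced in $PC(G)=(K_{|V(G)|},E(G))$, the negative edges of the induced subgraph must form an edge cut of the underlying $K_{|X|}$, and such an edge cut is exactly a spanning complete bipartite subgraph $K_{a,b}$ on $X$. Thus $G[X]$ is complete bipartite, and its two colour classes are independent sets of $G$. Hence any partition of $V(G)$ into $c$ balanced sets of $PC(G)$ refines to a partition of $V(G)$ into $2c$ independent sets, yielding $\chi_b(PC(G))\ge \chi(G)/2$. A final application of \Cref{thm:large_girth} producing $G$ of girth at least $k$ with arbitrarily large chromatic number closes the argument. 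The only subtlety to watch for here is verifying that ``edge cut of $K_m$'' really does coincide with ``spanning complete bipartite subgraph of $K_m$'', which is immediate but is the step where the completeness of the underlying graph is crucially used.
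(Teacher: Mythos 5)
Your proposal is correct and takes essentially the same approach as the paper: balanced sets in $(K_n,-)$ have at most two vertices, and the other two classes are handled by combining \Cref{thm:large_girth} with the lower bound $\chi_b(\widehat{G}) \geq \chi(\widehat{G}^{-})/2$ from \Cref{lem:no_switch}. The only difference is that for $PC(\mathcal{G}_k)$ you re-derive that bound by hand (via \Cref{lem:balanced}, an edge cut of a complete graph is a spanning complete bipartite graph, so each balanced set splits into two independent sets of $G$), whereas the paper just notes that $PC(G)$ and $(G,-)$ have the same negative subgraph, namely $G$, and cites \Cref{lem:no_switch} directly.
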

	
	\begin{proof}
		In $\mathcal{K}$, maximal balanced sets are of size 2, therefore $\chi_b(K_i,-) \geq \lceil \frac{i}{2} \rceil$ and the class has unbounded balanced chromatic number. 
		
		The class $\mathcal{G}_{k}$ and $PC(\mathcal{G}_k)$ have the same negative edges : graphs of girth at least $k$, which have unbounded chromatic number from  \Cref{thm:large_girth}. Hence, by \Cref{lem:no_switch}, those two classes also have unbounded balanced chromatic number. 
	\end{proof}
	An immediate corollary is the following :
	
	\begin{corollary}\label{cor:GS_form}
		If $\mF$ is a GS set, then there exists an integer $n$ and two forests $F_1,F_2$, such that 
		
		\centering $\set{\widehat{(K_t,-)},\widehat{PC(F_1)},F_2} \subset \mF$
		
	\end{corollary}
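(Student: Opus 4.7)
The plan is to apply the previous observation three times, once for each of the hereditary classes corresponding to $\mathcal{K}$, $\mathcal{G}_k$, and $PC(\mathcal{G}_k)$. One needs to be mildly careful with the latter two, since switching changes their all-negative edge set: I would instead work with their hereditary closures under the induced-signed-subgraph relation, namely all signings of graphs of girth at least $k$, respectively all signed complete graphs switching-equivalent to some $PC(H)$ with $H$ of girth at least $k$. These closures still contain the original classes and hence still have unbounded balanced chromatic number by the preceding lemma, so the observation applies.

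Applied to $\mathcal{K}$, the observation immediately yields an integer $t$ and a signed graph of $\mF$ switching equivalent to $(K_t,-)$, giving the first element of the triple. For the other two classes I would apply the observation for every $k \geq 3$, and then use pigeonhole on the finite set $\mF$ to extract single fixed elements $\widehat H, \widehat H' \in \mF$, each witnessing arbitrarily large $k$ in its respective class. Since switching equivalence preserves the underlying graph and $\widehat H, \widehat H'$ have a fixed number of vertices, for $k$ larger than that number the corresponding witness graphs cannot contain any cycle and must be forests $F_2$ and $F_1$. Thus $\widehat H$ has underlying graph $F_2$, and $\widehat H'$ is switching equivalent to $PC(F_1)$. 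By \Cref{lem:balanced}, every signature of a forest is switching equivalent to the all-positive one, so forbidding $\widehat H$ up to switching is the same as forbidding the graph $F_2$. Consequently $F_2 \in \mF$ and $\widehat{PC(F_1)} \in \mF$, which completes the required triple.

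The main obstacle is making the pigeonhole step precise: one must notice that a fixed signed graph on $n$ vertices cannot be switching equivalent to $(G,-)$, nor to $PC(G)$, for a graph $G$ of girth larger than $n$ unless $G$ is acyclic, so taking $k$ larger than the maximum size of any element of $\mF$ forces the witness graphs to be forests. The only other technical nuisance is the passage to hereditary closures mentioned above, which is invisible in the final conclusion since forests admit a unique switching class.
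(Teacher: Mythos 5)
Your proof is correct and follows essentially the argument the paper intends, since the paper states this as an immediate consequence of the Observation applied to $\mathcal{K}$, $\mathcal{G}_k$ and $PC(\mathcal{G}_k)$, with $k$ taken large enough that the girth-at-least-$k$ witnesses inside the finite set $\mF$ must be forests (whose unique switching class justifies writing $F_2\in\mF$). Your extra care about hereditary closure under switching and the pigeonhole over $k$ are fine but not needed beyond simply choosing one $k$ exceeding the order of every member of $\mF$.
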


	In the case that $t=2$ or one of $F_1$ or $F_2$ has two vertices, the family $\mF$ consists of empty graphs. Otherwise, if $t\in \set{3,4}$, then $\widehat{(K_t,-)}$ and $\widehat{PC(F_1)}$ could be switch equivalent, in which case we can have a GS set of order 2. This work is then a first step toward characterizing GS sets of order 2. 
	
	Our contribution is depicted in \Cref{tab:result}, where a linear forest is a forest whose components are paths.
	
	\begin{table}[h]
		\centering
		\renewcommand{\arraystretch}{1.5}
\begin{tabular}{c|c|c|}
\cline{2-3}
 & $K_n= K_3$ & $K_n= K_4$ \\ \hline
\multicolumn{1}{|c|}{$F$ contains a vertex of degree $3$  }& \multicolumn{2}{|c|}{UNBOUNDED} \\ \hline
\multicolumn{1}{|c|}{$F$ is a linear forest and $P_5 \not \subseteq F $ }& \multirow{2}{*}{BOUNDED} & BOUNDED \\ \cline{1-1} \cline{3-3}
\multicolumn{1}{|c|}{$F$ is a linear forest and $P_5 \subseteq F$ }& & UNKNOWN \\ \hline
\end{tabular}  
		\caption{Results on $\chi_b\left(\F{\left\{\widehat{(K_n,-)},F\right\}}\right)$}
		\label{tab:result}
	\end{table}

	\section{Sequences of unbounded balanced chromatic number}
	
	In this section we present some sequences of graphs with unbounded balanced chromatic number excluding some particular red-blue induced subgraph.
	
	Our first construction is based on the notion of shift graphs studied in \cite{EH64}. The second construction is based on the notions of line graphs and arc graphs. Ultimately, the line graph construction will be stronger than the shift graph one, but since the shift graph construction is self contained and more explicit, we believe that both of them might be interesting to the reader.
	
	\subsection{Signed shift graphs} 
	
	Given positive integers $k$ and $n$, $k \leq n$, the shift graph $S_{k,n}$ has as its vertices all increasing sequences $(s_1, s_2,\ldots, s_k)$, $1\leq s_1\leq s_2 \leq \cdots \leq s_k\leq n$, where two sequences are adjacent if they are of the form  $(s_1, s_2,\ldots, s_k)$ and $(s_2, s_3,\ldots, s_{k+1})$.

	Given an integer $k$, the family of shift graphs $\{S_{k,n} \mid n \in \mathbb{N}  \}$, is an example of a family of triangle-free graphs of unbounded chromatic number. For the sake of completeness, we present a proof of this fact first proved in \cite{EH68}.

	\begin{lemma}\label{lem:chi-s(kn)}
		Given any positive integers $k$ and $c$, there exists an integer $n$ such that $\chi (S_{k,n})>c$.
	\end{lemma}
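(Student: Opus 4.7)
The plan is to prove this by induction on $k$, establishing the inequality
\[
\chi(S_{k-1,n}) \;\le\; 2^{\chi(S_{k,n})}.
\]
Combined with the base case $S_{1,n}=K_n$ (any two singletons form an edge), iterating the inequality $k-1$ times yields $\chi(S_{k,n}) \ge \log^{(k-1)} n$, which exceeds any prescribed $c$ once $n$ is taken large enough.

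For the base case, one verifies directly that in $S_{1,n}$ every pair of distinct vertices is adjacent. For the inductive step, I would fix a proper coloring $\phi \colon V(S_{k,n}) \to [c]$ and define, for each $(k-1)$-tuple $a=(a_1,\ldots,a_{k-1})$, its \emph{color palette}
\[
\Phi(a) \;=\; \bigl\{\, \phi(a_1,\ldots,a_{k-1},t) \,:\, t>a_{k-1} \,\bigr\}\;\subseteq\;[c].
\]
There are at most $2^c$ such palettes, so to finish it suffices to check that $\Phi$ is a proper coloring of $S_{k-1,n}$.

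The main (and only nontrivial) step is that adjacency chase. Let $a=(a_1,\ldots,a_{k-1})$ and $b=(a_2,\ldots,a_k)$ be adjacent in $S_{k-1,n}$. For every $t>a_k$, the $k$-tuples $(a_1,\ldots,a_{k-1},a_k)$ and $(a_2,\ldots,a_k,t)$ form an edge of $S_{k,n}$, whence $\phi(a_1,\ldots,a_k)\neq \phi(a_2,\ldots,a_k,t)$. Consequently, the color $\alpha := \phi(a_1,\ldots,a_k)$ lies in $\Phi(a)$ (take $t=a_k$ in the definition of $\Phi(a)$) but lies in none of the contributions defining $\Phi(b)$, so $\Phi(a)\neq \Phi(b)$. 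This shows $\chi(S_{k-1,n})\le 2^c$, completing the induction.

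I expect the only subtle point to be keeping the indices straight in the adjacency argument, in particular making sure the witness color $\alpha$ truly belongs to $\Phi(a)$ and truly does not belong to $\Phi(b)$; everything else is routine. Once this is in place, the quantitative bound $\chi(S_{k,n})\ge \log^{(k-1)} n$ follows immediately and gives the desired $n$ for any $c$.
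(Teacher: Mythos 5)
Your proof is correct and is essentially the paper's own argument: the same induction on $k$ via the ``palette'' map, showing $\chi(S_{k-1,n})\le 2^{\chi(S_{k,n})}$ with the same adjacency chase (the paper states it as $\chi(S_{k,n})\le 2^{\chi(S_{k+1,n+1})}$). The only cosmetic difference is that the paper passes to $S_{k+1,n+1}$ so every palette is nonempty, whereas you keep the same $n$ and allow possibly empty palettes, which is harmless since your witness color still separates the two endpoints of any edge.
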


	\begin{proof}
		We prove this by induction on $k$. For $k=1$, $S_{1, n}=K_n$ and the claim is immediate.  The claim follows if we show that $\chi(S_{k,n}) \leq 2^{\chi(S_{k+1, n+1})}$.
		
		Let $\phi$ be a $c$-coloring of $S_{k+1,n+1}$. For each vertex $s=(s_1,s_2, \ldots, s_{k})$ of $S_{k,n}$ let $\psi(s)$ be the set of all colors assigned to the vertices $(s_1,s_2, \ldots, s_{k}, t)$ of $S_{k+1,n+1}$ such that $s_k < t \leq n+1$. We claim that $\psi$ is a (proper) coloring of $S_{k,n}$. That is because given a pair $u'=(u_1,u_2, \ldots u_{k})$ and $u''=(u_2,u_3, \ldots u_{k}, u_{k+1})$ of adjacent vertices in $S_{k, n}$, we consider the vertex  $u=(u_1,u_2, \ldots u_{k}, u_{k+1})$ of $S_{k+1,n+1}$ and observe that $\phi(u) \in \psi(u')$, but $\phi(u)\notin \psi(u'')$ because every extension of $u''$ is adjacent to $u$. Thus $\psi(u')\neq \psi(u'')$. 
	\end{proof}

	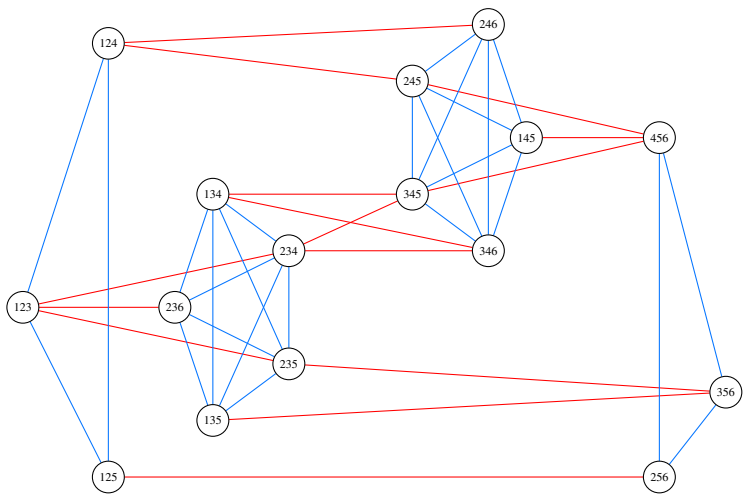
\begin{figure}[h]
		\centering
		\begin{tikzpicture}[scale = .5]
	\begin{pgfonlayer}{nodelayer}
		\node [style=small white] (0) at (-12.25, 2.5) {\small 123};
		\node [style=small white] (1) at (-10, 9.5) {\small 124};
		\node [style=small white] (2) at (-10, -2) {\small 125};
		\node [style=small white] (3) at (-5.25, 1) {\small 235};
		\node [style=small white] (4) at (-8.25, 2.5) {\small 236};
		\node [style=small white] (5) at (-7.25, 5.5) {\small 134};
		\node [style=small white] (6) at (-7.25, -0.5) {\small 135};
		\node [style=small white] (8) at (-5.25, 4) {\small 234};
		\node [style=small white] (9) at (1, 7) {\small 145};
		\node [style=small white] (11) at (-2, 8.5) {\small 245};
		\node [style=small white] (12) at (0, 10) {\small 246};
		\node [style=small white] (13) at (-2, 5.5) {\small 345};
		\node [style=small white] (14) at (0, 4) {\small 346};
		\node [style=small white] (15) at (4.5, 7) {\small 456};
		\node [style=small white] (16) at (4.5, -2) {\small 256};
		\node [style=small white] (17) at (6.25, 0.25) {\small 356};

	\end{pgfonlayer}
	\begin{pgfonlayer}{edgelayer}
		\draw [style=edge red] (0) to (8);
		\draw [style=edge red] (0) to (3);
		\draw [style=edge red] (0) to (4);
		\draw [style=edge red] (1) to (11);
		\draw [style=edge red] (1) to (12);
		\draw [style=edge red] (2) to (16);
		\draw [style=edge red] (8) to (13);
		\draw [style=edge red] (8) to (14);
		\draw [style=edge red] (3) to (17);
		\draw [style=edge red] (5) to (13);
		\draw [style=edge red] (5) to (14);
		\draw [style=edge red] (6) to (17);
		\draw [style=edge red] (9) to (15);
		\draw [style=edge red] (11) to (15);
		\draw [style=edge red] (13) to (15);
		\draw [style={blue_edge}] (17) to (16);
		\draw [style={blue_edge}] (16) to (15);
		\draw [style={blue_edge}] (15) to (17);
		\draw [style={blue_edge}] (1) to (0);
		\draw [style={blue_edge}] (0) to (2);
		\draw [style={blue_edge}] (2) to (1);
		\draw [style={blue_edge}] (5) to (8);
		\draw [style={blue_edge}] (8) to (4);
		\draw [style={blue_edge}] (4) to (3);
		\draw
		[style={blue_edge}] (4) to (5);
		\draw
		[style={blue_edge}] (4) to (6);
		\draw [style={blue_edge}] (3) to (6);
		\draw [{blue_edge}] (3) to (8);
		\draw [style={blue_edge}] (6) to (5);
		\draw [style={blue_edge}] (5) to (3);
		\draw [style={blue_edge}] (8) to (6);
		\draw [style={blue_edge}] (12) to (11);
		\draw [style={blue_edge}] (11) to (9);
		\draw [style={blue_edge}] (9) to (12);
		\draw [style={blue_edge}] (9) to (13);
		\draw [style={blue_edge}] (11) to (13);
		\draw [style={blue_edge}] (9) to (14);
		\draw [style={blue_edge}] (13) to (14);
		\draw [style={blue_edge}] (14) to (12);
		\draw [style={blue_edge}] (12) to (13);
		\draw [style={blue_edge}] (14) to (11);
	\end{pgfonlayer}
\end{tikzpicture}
		\caption{The biggest connected component of the signed shift graph $\widehat{S}_{3,6}$}
		\label{fig:shift_graph}
	\end{figure}
	
	We now consider the signed graph $\widehat{S}_{3,n}$ built on $S_{3,n}$ as follows. The negative edges of $\widehat{S}_{3,n}$ are the edges of  $S_{3,n}$. Each pair of vertices, $(a, b, c)$ and $(a', b, c')$, is connected by a positive edge. The main connected component of $\widehat{S}_{3,6}$ is depicted in \Cref{fig:shift_graph}.  Using the signed graph $\widehat{S}_{3,n}$ we can prove the following:   
	
	\begin{theorem}\label{thm:K3+K1,4}
		The set $\set{\widehat{(K_3,-)},K_{1,4}}$ is not a GS set.
	\end{theorem}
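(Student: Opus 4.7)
The plan is to use the signed shift graphs $\widehat{S}_{3,n}$ just defined as witnesses that $\set{\widehat{(K_3,-)}, K_{1,4}}$ is not a GS set. I need to verify three properties: that $\chi_b(\widehat{S}_{3,n})$ is unbounded as $n$ grows, that the underlying graph contains no induced $K_{1,4}$, and that no induced subgraph of $\widehat{S}_{3,n}$ is switching-equivalent to $(K_3,-)$ (equivalently, by \Cref{lem:Zaslavsky}, no induced triangle is unbalanced).

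The first property is essentially free: the negative subgraph $\widehat{S}_{3,n}^{-}$ is exactly the shift graph $S_{3,n}$, whose chromatic number is unbounded by \Cref{lem:chi-s(kn)}, so \Cref{lem:no_switch} yields $\chi_b(\widehat{S}_{3,n}) \geq \chi(S_{3,n})/2$, which tends to infinity.

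For the $K_{1,4}$-freeness, the key structural observation is that the neighborhood of any vertex $v=(a,b,c)$ partitions into three sets: the \emph{forward shifts} $(b,c,d)$ with $d>c$, the \emph{backward shifts} $(e,a,b)$ with $e<a$, and the \emph{positive neighbors} $(a',b,c')$ sharing middle $b$ with $v$. The forward shifts all share the middle coordinate $c$, the backward shifts all share middle $a$, and the positive neighbors all share middle $b$; since any two distinct triples with a common middle are joined by a positive edge, each of these sets is a clique in the underlying graph. Therefore $N(v)$ is covered by three cliques, so $\alpha(N(v)) \leq 3$ and no induced $K_{1,4}$ can be centered at $v$.

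Finally, to rule out induced unbalanced triangles, I will enumerate the possible sign patterns. A triangle with three negative edges would be a triangle of $S_{3,n}$, which is impossible by a short case analysis on the shift orientations around the triangle (cyclic, common source, or common sink): in each case two coordinates are forced to coincide, violating strict monotonicity. A triangle with a single negative edge $v_1v_2$ and a common positive neighbor $v_3$ would require $v_3$ to share the middle coordinate of $v_1$ and also of $v_2$; but shift-adjacent vertices have distinct middle coordinates, a contradiction. Hence every triangle has zero or two negative edges, and is therefore balanced. The main conceptual point I expect to verify is that completing with positive edges precisely along the shared-middle relation is exactly what simultaneously forces triangles to be balanced and keeps neighborhoods covered by only three cliques; once that observation is in place, each individual check is routine.
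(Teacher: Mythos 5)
Your proposal is correct and follows essentially the same route as the paper: the witnesses are the signed shift graphs $\widehat{S}_{3,n}$, unboundedness comes from \Cref{lem:chi-s(kn)} together with \Cref{lem:no_switch}, $K_{1,4}$-freeness comes from covering each neighborhood by the three middle-value cliques, and unbalanced triangles are excluded via triangle-freeness of $S_{3,n}$ plus the fact that positive edges live inside the shared-middle cliques. Your explicit case analysis of the sign patterns (three negative edges versus exactly one) merely spells out what the paper states more tersely, so there is no substantive difference.
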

	
	\begin{proof}
		By \Cref{cor:link_signed_unsigned} and \Cref{lem:chi-s(kn)}, the class of signed graphs $\widehat{S}_{3,n}$ has unbounded balanced chromatic number. 
		It remains to prove that each signed graph $\widehat{S}_{3,n}$ is in $\F{\left\{\widehat{(K_3,-)},K_{1,4}\right\}}$.
		Observe that the set of positive edges induces a disjoint union of cliques (each corresponding to a middle value of the triplets). Furthermore, the negative neighbors of a vertex $(a,b,c)$ are partitioned into two types: $(\bullet, a, b)$ and $(b,c, \bullet)$, each of which induces a clique where all the edges are positive. Thus, there can only be a maximum of three independent neighbors of a vertex $(a,b,c)$: a positive neighbor $(\bullet, b, \bullet)$, two negative neighbors $(\bullet, a, b)$ and $(b,c, \bullet)$. Furthermore, since $S_{3,n}$ is triangle-free and since positive edges induce unions  of cliques, $\widehat{S}_{3,n}$ has no negative triangle. 
	\end{proof}
	
	%\Cleo{I stoped here.}

	\subsection{Signed line graphs}
	
	The line graph of a graph $G$, denoted $L(G)$, has as vertices the edges of $G$, where edges with common end point in $G$ are adjacent in $L(G)$. Various characterizations of line graphs are given in the literature, among which is Beineke's characterization \cite{Beineke70}: a graph $H$ is isomorphic to the line graph of a simple graph $G$ if and only if it has no induced subgraph isomorphic to one of the nine graphs known as Beineke's. The first of those is the claw ($K_{1,3}$), which means, in particular, that line graphs are claw free.
	
	Given an orientation $D$ of $G$, the arc graph of $D$, denoted $A(D)$, is the graph whose vertices are arcs of $D$ and where the arcs $uv$ and $vw$ are adjacent. A relation between the chromatic number of $A(D)$ and the chromatic number of the graph $G$ was given in \cite{HE72, Poljak91}.
	
	\begin{theorem}\label{thm:ArcChromaticNumber}
		Given a graph $G$ and an orientation $D$ of it, we have 
		
		$$\min \set{k \mid \chi(G)\leq 2^k} \leq \chi(A(D)) \leq \min \set{k \mid \chi(G)\leq \binom{k}{\lfloor \frac{k}{2} \rfloor}}.$$	
		
	\end{theorem}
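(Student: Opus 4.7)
The plan is to prove both inequalities by encoding colorings via subsets of a ground set $[k]$; the asymmetry between the two bounds reflects the gap between $2^k$ and the largest antichain size $\binom{k}{\lfloor k/2 \rfloor}$ in $2^{[k]}$.

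For the upper bound, let $k$ be minimal with $\binom{k}{\lfloor k/2 \rfloor} \geq \chi(G)$. By Sperner's theorem, $2^{[k]}$ contains an antichain $\mathcal{A}$ of that size, so I would use $\mathcal{A}$ as the palette of a proper coloring $c : V(G) \to \mathcal{A}$; since $\mathcal{A}$ is an antichain, the endpoints of every edge of $G$ receive incomparable labels. For each arc $uv$ of $D$ the set $c(u) \setminus c(v)$ is nonempty, so I assign the arc $uv$ any element $i \in c(u) \setminus c(v)$, obtaining a map $\phi : V(A(D)) \to [k]$. To check that this is a proper coloring of $A(D)$, consider two adjacent arcs $uv$ and $vw$: if both received the same color $i$, then $i \in c(u) \setminus c(v)$ and $i \in c(v) \setminus c(w)$ would force $i \notin c(v)$ and $i \in c(v)$ simultaneously, a contradiction. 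Hence $\chi(A(D)) \leq k$, proving the right-hand inequality.

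For the lower bound, starting from a proper coloring $\phi : V(A(D)) \to [k]$ with $k = \chi(A(D))$, I color each vertex $v$ of $G$ by the subset of colors on its out-arcs, namely $\psi(v) = \set{\phi(vw) \mid vw \text{ is an arc of } D}$, which uses at most $2^k$ colors. To see $\psi$ is proper, take an edge $uv \in E(G)$ and orient it as $u \to v$ in $D$; every out-arc $vw$ shares the vertex $v$ with $uv$ and is therefore adjacent to $uv$ in $A(D)$, so no such $vw$ can carry the color $\phi(uv)$. Thus $\phi(uv) \in \psi(u) \setminus \psi(v)$, so $\psi$ is a proper $2^k$-coloring of $G$, yielding $\chi(G) \leq 2^{\chi(A(D))}$, which is the desired left-hand inequality.

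There is no deep obstacle in this argument: Sperner's theorem delivers the upper bound, and a direct out-neighborhood encoding delivers the lower one. The only subtlety worth flagging is that in the upper-bound construction it is not enough for adjacent vertices of $G$ to receive \emph{distinct} subsets of $[k]$; we really need them to be \emph{incomparable}, which is precisely why working with a Sperner antichain rather than an arbitrary family of subsets is essential.
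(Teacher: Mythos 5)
Your proof is correct. Note that the paper does not prove this statement at all --- it is quoted as a known result from the cited references on arc graphs --- so there is no in-paper argument to compare against; your write-up is essentially the classical proof from that literature. Both directions check out: the out-arc encoding $\psi(v)=\set{\phi(vw)\mid vw\in A(D)}$ gives a proper $2^k$-coloring of $G$ because for an arc $uv$ the color $\phi(uv)$ lies in $\psi(u)$ but is excluded from $\psi(v)$ by adjacency of $uv$ with every $vw$, and the antichain construction makes $c(u)\setminus c(v)$ nonempty on every arc, with the head-to-tail adjacency $uv,vw$ ruled out exactly as you say. One cosmetic remark: Sperner's theorem is not actually needed for the upper bound --- you only need \emph{some} antichain of size $\binom{k}{\lfloor k/2\rfloor}$, and the middle layer of $2^{[k]}$ provides it directly; Sperner's theorem is the converse statement that no larger antichain exists, which would only matter if one wanted to show this choice of $k$ is optimal for the method. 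Your flagged subtlety (incomparability rather than mere distinctness of the labels) is indeed the key point of the construction.
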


	\begin{definition}
		Given a graph $G$ and an orientation $D$ of it, we define the signed line graph of $D$, denoted $(L(G), \sigma_D)$ to be a signed graph on $L(G)$ whose negative edges are the edges of $A(D)$.  
	\end{definition} 	
	
	Observe that a triangle in $L(G)$ corresponds to one of two possibilities: either a $K_3$ in $G$, or a $K_{1,3}$ in $G$. Any triangle in $L(G)$ which corresponds to a $K_{1,3}$ in $G$ is of positive sign in $(L(G), \sigma_D)$. That is because either the three edges of $K_{1,3}$ are all oriented the same way, in which case they induce three positive edges in $(L(G), \sigma_D)$, or exactly two of them are in the same direction, in which case we have a triangle with exactly two negative edges in $(L(G), \sigma_D)$.
	
	If the graph $G$ is selected to be triangle-free, then all of the triangles of $(L(G), \sigma_D)$ are of the second type and. Hence, every triangle is positive.
	
	By taking $G$ to be a triangle-free graph of arbitrarily large chromatic number (for example using \Cref{thm:large_girth}), then applying \Cref{thm:ArcChromaticNumber}, we conclude that the negative edges of $(L(G), \sigma_D)$ induce a graph of high chromatic number. Hence, from \Cref{lem:no_switch}, we conclude that the balanced chromatic number of $(L(G), \sigma_D)$ can be arbitrarily large.
	
	Noting that line graphs are in particular claw free, we conclude that:
	
	\begin{theorem}
		\label{thm:K_3+K_1,3}
		The set $\set{\widehat{(K_3,-)},K_{1,3}}$ is not a GS set.
	\end{theorem}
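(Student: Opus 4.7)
The plan is to build directly on the signed line graph framework developed in the preceding subsection. I would take $G$ to be a triangle-free graph of very large chromatic number (guaranteed by \Cref{thm:large_girth}), choose an arbitrary orientation $D$ of $G$, and consider the signed line graph $(L(G),\sigma_D)$ as the candidate signed graph. The argument then has three ingredients, all essentially in place before the statement.

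First, I need to show that $(L(G),\sigma_D)$ contains no induced $K_{1,3}$. This is immediate from Beineke's characterization quoted just above: the claw is the first of the nine forbidden induced subgraphs of line graphs, so $L(G)$ is claw-free regardless of the signature. Second, I need to show that no induced subgraph of $(L(G),\sigma_D)$ is switching equivalent to $\widehat{(K_3,-)}$, i.e.\ that no triangle of $(L(G),\sigma_D)$ is negative (since the sign of a cycle is switching invariant). By the discussion preceding the statement, every triangle in $L(G)$ arises from either a $K_3$ or a $K_{1,3}$ in $G$. Triangles of the first type do not occur because $G$ is triangle-free. Triangles of the second type, coming from a star $K_{1,3}$ with orientation, always carry either zero or exactly two negative edges in $\sigma_D$ (depending on whether the three arcs are oriented uniformly or with exactly one of them opposite to the other two at the center), so their sign is $+$.

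Third, I need to show $\chi_b(L(G),\sigma_D)$ is unbounded as $G$ ranges over triangle-free graphs of large chromatic number. The negative part of $(L(G),\sigma_D)$ is, by definition, the arc graph $A(D)$. By \Cref{thm:ArcChromaticNumber} we have $\chi(A(D)) \geq \min\{k \mid \chi(G)\leq 2^k\}$, which tends to infinity with $\chi(G)$. Combining this with \Cref{lem:no_switch} gives $\chi_b(L(G),\sigma_D) \geq \tfrac{1}{2}\chi(A(D)) \to \infty$, so the class $\{(L(G),\sigma_D) : G \text{ triangle-free}\} \subseteq \F{\{\widehat{(K_3,-)},K_{1,3}\}}$ has unbounded balanced chromatic number, proving the set is not GS.

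There is essentially no hard step here: everything has been set up by \Cref{thm:large_girth}, \Cref{thm:ArcChromaticNumber}, \Cref{lem:no_switch}, Beineke's theorem, and the triangle-sign computation for $(L(G),\sigma_D)$. The only point that requires a tiny bit of care is the verification that a triangle of $L(G)$ induced by a $K_{1,3}$ in $G$ is positive in $\sigma_D$ for \emph{every} orientation of the three edges at the center, which amounts to checking the two cases (all three arcs pointing the same way at the center vs.\ not) and noting both give an even number of negative edges.
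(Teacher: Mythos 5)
Your proposal is correct and follows essentially the same route as the paper: the paper's proof of \Cref{thm:K_3+K_1,3} is precisely the signed line graph construction, using Beineke's theorem for claw-freeness, the two-case orientation check to see that triangles coming from a $K_{1,3}$ are positive (and $K_3$-triangles are absent since $G$ is triangle-free), and \Cref{thm:ArcChromaticNumber} together with \Cref{lem:no_switch} to make the balanced chromatic number unbounded. No gaps; your write-up just makes explicit the switching-invariance remark and the factor $\tfrac12$ from \Cref{lem:no_switch} that the paper leaves implicit.
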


	\section{Negative triangle and linear forest}
	
	So far we have observed that, for a set $\mF=\set{\widehat{F_1}, \widehat{F_2}}$ to be a GS set, one of  $\widehat{F_1}, \widehat{F_2}$, say $\widehat{F_1}$, must be switching equivalent to either $(K_3,-)$ or $(K_4,-)$ and the other must be a forest. If the forest $F_2$ has a vertex of degree $3$ or more, then $\F{(\mF)}$ contains  $\F{\{\widehat{(K_3,-)},K_{1,3}\}}$ and, therefore, its balanced chromatic number is not bounded. Thus, for $\mF$ to be a GS set of order two, besides the fact that the underlying graph of $F_1$ must be $K_3$ or $K_4$, the second, $F_2$, must be a linear forest: that is a forest where each component is a path. In this section, then, we show that for each linear forest $F$,  $\mF=\set{\widehat{(K_3,-)}, F}$ is a GS set. To that end, we first show that it is enough to only consider the cases when $F$ is a path, i.e., a connected linear forest. The disjoint union of two graphs $G$ and $H$ is denoted $G+H$.

	\begin{proposition}\label{prop:K3-ForestToPath}
		If $\set{\widehat{(K_3,-)}, F_1}$ and $\set{\widehat{(K_3,-)}, F_2}$ are GS sets, then $\set{(K_3,-), F_1+F_2}$ is also a GS set.
	\end{proposition}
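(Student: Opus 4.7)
The plan is a direct two-case argument exploiting the fact that forbidding induced negative triangles forces a strong local structure. Let $c_1, c_2$ be bounds witnessing that $\{\widehat{(K_3,-)}, F_1\}$ and $\{\widehat{(K_3,-)}, F_2\}$ are GS sets, and take any $\widehat{G} \in \F\{\widehat{(K_3,-)}, F_1+F_2\}$. If $\widehat{G}$ contains no induced copy of $F_1$ in its underlying graph, then $\widehat{G} \in \F\{\widehat{(K_3,-)}, F_1\}$ and we are done with $\chi_b(\widehat{G}) \leq c_1$. Otherwise, fix an induced copy of $F_1$ with vertex set $X = \{v_1, \ldots, v_k\}$, and observe that $Y := V(G)\setminus N[X]$ cannot contain an induced copy of $F_2$: such a copy together with $X$ would yield an induced $F_1+F_2$ in $\widehat{G}$. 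Hence $\widehat{G}[Y] \in \F\{\widehat{(K_3,-)}, F_2\}$, giving $\chi_b(\widehat{G}[Y]) \leq c_2$.

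For the remaining vertices $N[X]$, I would invoke the following \emph{local balancedness} lemma, which is an immediate consequence of the absence of induced $\widehat{(K_3,-)}$: for every vertex $v$, the signed graph $\widehat{G}[N[v]]$ is balanced. Indeed, after switching at every negative neighbor of $v$, all edges incident to $v$ are positive; then any edge inside $N(v)$ closes a triangle with $v$ whose other two edges are positive, so this edge must be positive as well, lest we create a negative triangle. After switching, all edges of $\widehat{G}[N[v]]$ are positive, so the signed graph is balanced. Using this, I partition $N[X]$ into $k$ blocks $A_1,\ldots,A_k$, where $A_i$ consists of $v_i$ together with all $u \in N(X)\setminus X$ whose smallest-indexed neighbor in $X$ is $v_i$. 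Each $A_i \subseteq N[v_i]$, so $\widehat{G}[A_i]$ is a subgraph of the balanced signed graph $\widehat{G}[N[v_i]]$ and is therefore balanced.

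Combining the partition of $N[X]$ into $|V(F_1)|$ balanced parts with the $c_2$-coloring of $\widehat{G}[Y]$ yields $\chi_b(\widehat{G}) \leq |V(F_1)| + c_2$, giving the uniform bound $\max(c_1,\,|V(F_1)| + c_2)$ on $\chi_b(\F\{\widehat{(K_3,-)}, F_1+F_2\})$. There is no serious obstacle; the argument is short once local balancedness is noted. It is worth remarking that this approach depends crucially on the $K_3$ case, and does not extend verbatim when $\widehat{(K_3,-)}$ is replaced by $\widehat{(K_4,-)}$, since then closed neighborhoods need not be balanced.
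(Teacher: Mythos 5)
Your proof is correct and follows essentially the same route as the paper: both rest on the observation that forbidding induced $\widehat{(K_3,-)}$ makes every closed neighborhood balanced, cover $N[X]$ of an induced copy of $F_1$ by $|V(F_1)|$ balanced sets, and note that the remaining vertices are $F_2$-free, yielding the bound $\max\{c_1,\,|V(F_1)|+c_2\}$. The only difference is presentational (you argue directly, the paper by contradiction), so there is nothing to add.
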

	
	\begin{proof}
		Suppose $\chi_b\left(\F{\set{\widehat{(K_3,-)}, F_1}}\right)\leq s$ and $\chi_b\left(\F{\set{\widehat{(K_3,-)}, F_2}}\right)\leq t$. We now consider a signed graph $\widehat{G}\in \F{\set{\widehat{(K_3,-)}, F_1+F_2}}$ and claim that $\chi_b(\widehat{G}) \leq \max\{s, |F_1|+t\}$. Assume to the contrary, that $\chi_b(\widehat{G}) \geq \max\{s+1, |F_1|+t+1\}$.
		
		\noindent
		\begin{minipage}{0.71\linewidth}
			\setlength{\parindent}{2em}
			Observe that being a $\widehat{(K_{3},-)}$-free signed graph is the same as the (closed) neighborhood of each vertex inducing a balanced signed graph. That is because given a vertex $v$, after switching at some neighbors if needed, we may have only positive edges incident to $v$. Then there is no negative triangle incident with $v$ if and only if there is no negative edge induced by its neighbors. 
			
			As $\chi_b(\widehat{G})> s$, and since there is no $\widehat{(K_3,-)}$, there must be an induced copy $F'$ of $F_1$. Consider the subgraph $\widehat{G}_1$ induced by $F'$ and all its neighbors. We claim that $\chi_b(\widehat{G}_1) \leq |F_1|$. Indeed, $V(G_1) = \bigcup\limits_{u \in F'} N[u]$, and each of these $ |F_1|$ sets are balanced. 
			
			As $\chi_b(\widehat{G})\geq |F_1|+t+1$, we have $\chi_b(\widehat{G}-\widehat{G}_1)\geq t+1$. As $\widehat{G}-\widehat{G}_1$ still has no $\widehat{(K_3,-)}$, it must have an induced subgraph $F''$ isomorphic to $F_2$. See \Cref{fig:disjoint_union_proof} for an illustration. Then, $F'$ and $F''$ together induce an isomorphic copy of $F_1+F_2$, contradicting the choice of $\widehat{G}$. \qedhere			
		\end{minipage}\hfill
		\begin{minipage}{0.22\linewidth}
			\centering
		
			\begin{tikzpicture}[scale = .5]
	\begin{pgfonlayer}{nodelayer}
		\node [style=none] (16) at (-4.25, 7.25) {};
		\node [style=none] (17) at (-3.25, 7.25) {};
		\node [style=none] (18) at (-2.25, 6.25) {};
		\node [style=none] (19) at (-2.25, -1) {};
		\node [style=none] (20) at (-3.25, -2) {};
		\node [style=none] (21) at (-4.25, -2) {};
		\node [style=none] (22) at (-5.25, -1) {};
		\node [style=none] (23) at (-5.25, 6.25) {};
		\node [style=small white] (24) at (-3, 5) {};
		\node [style=small white] (25) at (-3, 3) {};
		\node [style=small white] (26) at (-3, 1) {};
		\node [style=small white] (27) at (-3, -1) {};
		\node [style=small white] (28) at (-3.75, 6.25) {};
		\node [style=small white] (29) at (-4.5, 6.5) {};
		\node [style=small white] (30) at (-4.25, 5.75) {};
		\node [style=small white] (31) at (-3.75, 4.25) {};
		\node [style=small white] (32) at (-4.5, 4.5) {};
		\node [style=small white] (33) at (-4.25, 3.75) {};
		\node [style=small white] (34) at (-3.75, 2.25) {};
		\node [style=small white] (35) at (-4.5, 2.5) {};
		\node [style=small white] (36) at (-4.25, 1.75) {};
		\node [style=small white] (37) at (-3.75, 0.25) {};
		\node [style=small white] (38) at (-4.5, 0.5) {};
		\node [style=small white] (39) at (-4.25, -0.25) {};
		\node [style=small white] (40) at (-1, 5) {};
		\node [style=small white] (41) at (-1, 3) {};
		\node [style=small white] (42) at (-1, 1) {};
		\node [style=small white] (43) at (-1, -1) {};
		\node [style=none] (44) at (-3.75, -2.5) {$\widehat{G}_1$};
		\node [style=none] (45) at (-3, -1.5) {$F'$};
		\node [style=none] (46) at (-1, -1.5) {$F''$};
	\end{pgfonlayer}
	\begin{pgfonlayer}{edgelayer}
		\draw [style={gray_region}] (20.center)
			 to (21.center)
			 to [bend left=45] (22.center)
			 to (23.center)
			 to [bend left=45, looseness=1.25] (16.center)
			 to (17.center)
			 to [bend left=45] (18.center)
			 to (19.center)
			 to [bend left=45] cycle;
		\draw [style=edge red] (24) to (25);
		\draw [style=edge red] (25) to (26);
		\draw [style=edge red] (26) to (27);
		\draw [style={blue_edge}] (24) to (30);
		\draw [style={blue_edge}] (29) to (24);
		\draw [style={blue_edge}] (28) to (24);
		\draw [style={blue_edge}] (28) to (29);
		\draw [style={blue_edge}] (29) to (30);
		\draw [style={blue_edge}] (30) to (28);
		\draw [style={blue_edge}] (31) to (32);
		\draw [style={blue_edge}] (32) to (33);
		\draw [style={blue_edge}] (33) to (31);
		\draw [style={blue_edge}] (34) to (35);
		\draw [style={blue_edge}] (35) to (36);
		\draw [style={blue_edge}] (36) to (34);
		\draw [style={blue_edge}] (37) to (38);
		\draw [style={blue_edge}] (38) to (39);
		\draw [style={blue_edge}] (39) to (37);
		\draw [style={blue_edge}] (33) to (25);
		\draw [style={blue_edge}] (25) to (31);
		\draw [style={blue_edge}] (32) to (25);
		\draw [style={blue_edge}] (34) to (26);
		\draw [style={blue_edge}] (26) to (36);
		\draw [style={blue_edge}] (35) to (26);
		\draw [style={blue_edge}] (37) to (27);
		\draw [style={blue_edge}] (27) to (39);
		\draw [style={blue_edge}] (38) to (27);
		\draw [style=edge red] (40) to (41);
		\draw [style=edge red] (41) to (42);
		\draw [style=edge red] (42) to (43);
	\end{pgfonlayer}
\end{tikzpicture}
			
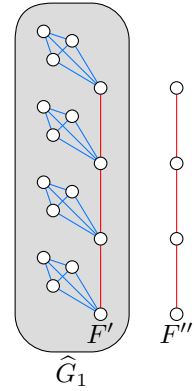
\captionof{figure}{Illustration of the proof, with $F_1=F_2=P_4$}
			\label{fig:disjoint_union_proof}
		\end{minipage}
	\end{proof}

	\begin{theorem}\label{thm:K3+Pk}
		For every positive integer $k$, the set $\set{\widehat{(K_3,-)}, P_k}$ is a GS set.
	\end{theorem}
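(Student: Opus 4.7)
By \Cref{prop:K3-ForestToPath}, it suffices to prove the theorem for $F = P_k$, so I proceed by induction on $k$. For the base case $k \leq 3$, a $P_k$-free graph is a disjoint union of cliques, and a $\widehat{(K_3,-)}$-free signed clique is balanced: fix a vertex $u$ and switch so that every edge incident to $u$ is positive; any remaining negative edge among $N(u)$ would yield a triangle through $u$ with exactly one negative edge, contradicting $\widehat{(K_3,-)}$-freeness. Hence the whole clique is switching equivalent to $(K_n,+)$, giving $\chi_b \leq 1$.

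For the inductive step, assume that $f(k-1)$ bounds $\chi_b$ on $\F{\{\widehat{(K_3,-)}, P_{k-1}\}}$, and take a connected $\widehat{G} \in \F{\{\widehat{(K_3,-)}, P_k\}}$. Fix an arbitrary vertex $v$ and form the BFS layers $L_0 = \{v\}, L_1, \ldots, L_d$; since $G$ is connected and $P_k$-free, $d \leq k-2$ (a shortest path from $v$ to a deepest vertex is induced). The set $N[v] = L_0 \cup L_1$ is balanced by the same switching argument as in the base case, so it receives a single color. The plan is then to color each $G[L_i]$ with $i \geq 2$ using at most $f(k-1)$ balanced sets, yielding $\chi_b(\widehat{G}) \leq 1 + (k-3)\,f(k-1)$.

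The main obstacle is establishing that $G[L_i] \in \F{\{\widehat{(K_3,-)}, P_{k-1}\}}$ for every $i \geq 2$, so that the induction hypothesis applies. While $\widehat{(K_3,-)}$-freeness is inherited under induced subgraphs, ruling out induced $P_{k-1}$'s inside $G[L_i]$ requires work. The intended argument is by contradiction: suppose $u_1 u_2 \ldots u_{k-1}$ is induced in $G[L_i]$. Take a shortest $v$-to-$u_1$ path $v = w_0, w_1, \ldots, w_i = u_1$ in $G$ and let $j$ be the largest index such that $w_{i-1} u_j$ is an edge. If $j \leq i$, then $w_0, w_1, \ldots, w_{i-1}, u_j, u_{j+1}, \ldots, u_{k-1}$ is an induced path in $G$ on $i + k - j \geq k$ vertices (BFS layer gaps give $w_\ell u_s$ non-edges for $\ell \leq i-2$, the BFS path itself is induced, and the maximality of $j$ eliminates the remaining $w_{i-1}u_s$ edges), contradicting $P_k$-freeness. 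A symmetric argument using a shortest $v$-to-$u_{k-1}$ path rules out the case where the minimal index $j'$ adjacent to the symmetric penultimate vertex is $\geq k-i$.

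The remaining ``middle'' configurations, where neither extremal index produces the desired contradiction, are the delicate part of the proof. The most natural way to resolve them is to exploit that a single vertex $w \in L_{i-1}$ may simultaneously be adjacent to $u_1$ and $u_{k-1}$, and then use the induced path $v, w_1, \ldots, w_{i-1}=w, u_{k-1}, u_{k-2}, \ldots, u_2$ (checking non-adjacencies via the BFS gaps and the assumption $j_{\max} = k-1$) to obtain an induced $P_{i+k-2} \geq P_k$ in $G$; if the two penultimate vertices differ, one combines the two shortest paths through the middle of the $P_{k-1}$. Summing the contributions of the $d \leq k-2$ layers then completes the induction.
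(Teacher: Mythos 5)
Your base case and outer structure (BFS layers from a vertex $v$, with $L_0\cup L_1$ balanced by the switching argument) are in the spirit of the paper's Gyárfás-style proof, but the inductive step has a genuine gap: the claim that each layer $G[L_i]$, $i\ge 2$, is $P_{k-1}$-free is false, and no refinement of your extension argument can establish it. Counterexample: let $u_1,\dots,u_{k-1}$ induce a $P_{k-1}$, add a vertex $w$ adjacent to all of $u_1,\dots,u_{k-1}$, and a vertex $v$ adjacent only to $w$; make every edge positive. This signed graph is in $\F{\set{\widehat{(K_3,-)}, P_k}}$ (every triangle is positive, any induced path through $w$ has at most three vertices since all other vertices are neighbors of $w$, and any induced path avoiding $w$ lies inside the $P_{k-1}$ or is just $v$), yet $L_2=\{u_1,\dots,u_{k-1}\}$ induces a full $P_{k-1}$. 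In this example your largest index $j$ equals $k-1>i$ and the symmetric smallest index equals $1$ (the penultimate vertex $w_{i-1}=w$ dominates the whole path), so both of your extremal cases fail; moreover your proposed fix, the path $v,\dots,w,u_{k-1},u_{k-2},\dots,u_2$, is not induced because $w$ is adjacent to every $u_j$. The ``middle configurations'' you set aside are therefore not a delicate residual case but the generic obstruction, and the induction hypothesis simply cannot be applied to the layers.

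The paper gets around exactly this by strengthening the induction statement: for every connected $\widehat{G}\in\F{\set{\widehat{(K_3,-)}}}$ with $\chi_b(\widehat{G})\ge 2^{k}$ and every prescribed vertex $u$, there is an induced path of length $k+1$ starting at $u$. If no such path exists, the layers up to distance $k$ cover $G$, so some layer $\widehat{G}_i$ satisfies $\chi_b(\widehat{G}_i)\ge 2^{k-i}+1$; one then takes a suitable connected component of that layer, picks a connecting vertex $u_{i-1}$ in layer $i-1$, deletes its neighborhood (which is balanced, so $\chi_b$ drops by at most one), and applies the rooted statement inside a remaining component together with a single attachment vertex $u_i\in N(u_{i-1})$. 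Since the inductively found path meets $N(u_{i-1})$ only in $u_i$, it extends through $u_{i-1}$ and a shortest path back to $u$, producing the long induced path in $G$ itself rather than inside a layer. To repair your write-up, replace the (false) layer claim with this rooted-path strengthening; the per-layer threshold $2^{k-i}$ then telescopes to the paper's bound $\chi_b<2^{k}$ for $P_{k+2}$-free graphs.
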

	
	The following proof is inspired from the classic argument from \Gya \cite{Gya87}.
	\begin{proof}
		We prove, by induction on $k$, that $\chi_{b}\left(\F{\set{\widehat{(K_3,-)}, P_{k+2}}}\right)< 2^k.$ 		
		In fact, we prove a stronger claim: given a connected signed graph $\widehat{G}$ in $\F{\set{\widehat{(K_3,-)}}}$ whose balanced chromatic number is at least $2^{k}$, for each vertex $u$ of $G$ there is an induced path of length $k+1$ (i.e. $P_{k+2}$) starting at $u$.
		
		For $k=1$, if there is no path of length 2 starting at the vertex $u$, noting that $G$ is connected, we conclude that $\widehat{G}$ is a switching of $(K_i, +)$ for some $i$. Thus, $\chi_b(\widehat{G})=1$. 
		
		Suppose that the claim holds for every value up to $k-1$ and consider the statement for $k$.  Let $\widehat{G}$  be a connected signed graph in $\F{\set{\widehat{(K_3,-)}}}$ which has balanced chromatic number at least $2^{k}$ and let $u$ be a vertex of $\widehat{G}$. Let $\widehat{G}_d$ be the subgraph of $\widehat{G}$ induced by the vertices at distance $d$ from $u$. Assume, toward a contradiction, that there is no induced path of length $k+1$ starting from $u$. That is to say: $\set{u}, \widehat{G}_1, \widehat{G}_2, \ldots, \widehat{G}_k$ cover all of the vertices of $\widehat{G}$. 
		
		We claim that, for some $i$, we have $\chi_b(\widehat{G}_i)\geq 2^{k-i}+1$. Observe that $u$ and $\widehat{G}_1$ together induce a balanced set. If each $\widehat{G}_i$ is $2^{k-1}$-colorable for $i=2, \dots, k$, then $\chi_b(\widehat{G})\leq 1+2^{k-1}+2^{k-2} \ldots +2=2^{k}-1$, contradicting the assumption that $\chi_b(\widehat{G}) \geq 2^k$. Thus, we assume that $\chi_b(\widehat{G}_i)\geq 2^{k-i}+1$ for a fixed $i$, with $2\leq i \leq k$.
		
		In $\widehat{G}_{i}$, one of the connected components, say $\widehat{G}_i^1$ has balanced chromatic number at least $2^{k-i}+1$. Let $u_{i-1}$ be a vertex in $\widehat{G}_{i-1}$ that connects $\widehat{G}_{i}^1$ to $u$. Since the neighborhood of each vertex has balanced chromatic number $1$. The signed graph $\widehat{G}_{i}^1-N(u_{i-1})$ has balanced chromatic number at least $2^{k-i}$. Thus, one of its connected components, say $\widehat{C}$, has balanced chromatic number at least $2^{k-i}$. Let $u_{i}$ be a vertex in $N(u_{i-1})$ which has a neighbor in $\widehat{C}$. The subgraph $\widehat{C'}$ induced by $\widehat{C}$ and $u_i$ is connected, belongs to $\F{\set{\widehat{(K_3,-)}}}$, and has balanced chromatic number at least $2^{k-i}$. Thus, by the inductive assumption, we have an induced path $P$ of length $k-i+1$ in $\widehat{C'}$ starting at $u_i$.
		By the choice of $\widehat{C'}$, $u_i$ is the only neighbor of $u_{i-1}$ in $P$. Hence, extending $P$ to $u_{i-1}$ and then, through a shortest path, to $u$, we have an induced path of length $k+1$ with $u$ as its starting point.
	\end{proof}

	%\Reza{1. We need to check if $2^k$ or $2^{k+1}$ is the correct one. 2. Perhaps $2^k$ can be slightly improved; by a -1 for each $G_i$, which would have a bit bigger impact in inductive approach. But what is the correct order of magnitude?} 

	\begin{corollary}
		If $F$ is linear forest consisting of $l$ paths each of length at most $k$, then  $$\chi_{b}\left(\F{\set{\widehat{(K_3,-)}, F}}\right)< 2^k + (l-1)k.$$
	\end{corollary}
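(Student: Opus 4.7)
The plan is a direct induction on the number $l$ of path components of $F$, chaining Theorem \ref{thm:K3+Pk} (base case) with Proposition \ref{prop:K3-ForestToPath} (inductive step). No clever new ingredient seems needed; the corollary should fall out by bookkeeping.

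Write $F = P^{(1)} + P^{(2)} + \cdots + P^{(l)}$ where each $P^{(i)}$ is a path with $|V(P^{(i)})| \leq k$. For the base case $l=1$, note that $P^{(1)}$ appears as an induced subgraph of $P_k$, so any signed graph that forbids $P^{(1)}$ as an induced subgraph in particular forbids $P_k$. Therefore
\[
\chi_b\bigl(\F(\{\widehat{(K_3,-)}, P^{(1)}\})\bigr) \;\leq\; \chi_b\bigl(\F(\{\widehat{(K_3,-)}, P_k\})\bigr) \;<\; 2^{k-2}
\]
by Theorem \ref{thm:K3+Pk}, which is strictly less than $2^k$, settling the case $l=1$.

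For the inductive step, decompose $F = P^{(1)} + F'$ where $F' = P^{(2)} + \cdots + P^{(l)}$ is a linear forest with $l-1$ components, each of length at most $k$. The induction hypothesis gives $\chi_b(\F(\{\widehat{(K_3,-)}, F'\})) < 2^k + (l-2)k$. Applying Proposition \ref{prop:K3-ForestToPath} to $F_1 = P^{(1)}$ and $F_2 = F'$, with base bound $s < 2^k$ and inductive bound $t < 2^k + (l-2)k$, and using $|V(P^{(1)})| \leq k$, yields
\[
\chi_b\bigl(\F(\{\widehat{(K_3,-)}, F\})\bigr) \;\leq\; \max\bigl(s,\; |V(P^{(1)})| + t\bigr) \;<\; \max\bigl(2^k,\; k + 2^k + (l-2)k\bigr) \;=\; 2^k + (l-1)k,
\]
which closes the induction.

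The only place to be a little careful is to make sure the strict inequality survives each step. This works because Proposition \ref{prop:K3-ForestToPath} only adds $|V(P^{(1)})| \leq k$ on top of the bound for $F'$, matching exactly the $+k$ in the target bound $2^k + (l-1)k$. Hence no real obstacle arises; the proof is essentially the routine compounding of the two previous results.
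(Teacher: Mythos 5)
Your proof is correct and is exactly the derivation the paper intends: the corollary is left as an immediate consequence of \Cref{thm:K3+Pk} and \Cref{prop:K3-ForestToPath}, chained by induction on the number of components, and your bookkeeping (base bound $s<2^k$, plus $|V(P^{(1)})|\le k$ added per extra component) yields the stated bound with the strict inequality intact. The one thing to note is that you read ``each of length at most $k$'' as ``at most $k$ vertices''; under the edge-count meaning of length the paths can have $k+1$ vertices, and the same chaining then only gives $2^{k-1}-1+(l-1)(k+1)$, which stays below $2^k+(l-1)k$ only when $l\le 2^{k-1}+1$ --- so your reading is the one under which the bound as written actually follows, and it is evidently the intended one.
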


	\section{$\widehat{(K_4,-)}$ and linear forest}

	We have already seen that, for $\set{\widehat{(K_4,-)}, F}$ to be a GS set, $F$ must be a linear forest. We conjecture that this necessary condition is also sufficient.
	
	\begin{conjecture}\label{conj:K4+BambooForest}
		For any linear forest $F$, the set $\set{\widehat{(K_4,-)}, F}$ is a GS set.
	\end{conjecture}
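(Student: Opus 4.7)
The plan is to mirror the two-step structure used for the $\widehat{(K_3,-)}$ case, exploiting the fact that Theorem~\ref{thm:K3+Pk} already gives a bound on $\chi_b$ for $\widehat{(K_3,-)}$-free, $F$-free signed graphs, and observing the crucial local property: in a $\widehat{(K_4,-)}$-free signed graph, the closed neighborhood of every vertex induces a $\widehat{(K_3,-)}$-free signed graph (after switching so that all edges at the center are positive, a negative triangle in the neighborhood would complete a $\widehat{(K_4,-)}$). Hence, if we also forbid an induced copy of a linear forest $F$, each closed neighborhood lies in $\F{\{\widehat{(K_3,-)},F\}}$, so by Theorem~\ref{thm:K3+Pk} its balanced chromatic number is at most some constant $g(F)$ depending only on $F$.

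First, I would reduce to the case where $F$ is a single path via the disjoint-union trick of Proposition~\ref{prop:K3-ForestToPath}. Assuming $\chi_b(\F{\{\widehat{(K_4,-)},F_1\}})\le s$ and $\chi_b(\F{\{\widehat{(K_4,-)},F_2\}})\le t$, take $\widehat{G}\in\F{\{\widehat{(K_4,-)},F_1+F_2\}}$ with large $\chi_b$; the argument is identical to that proposition except that the subgraph $\widehat{G}_1$ induced by an $F_1$-copy $F'$ and its neighborhood now satisfies $\chi_b(\widehat{G}_1)\le |F_1|\cdot g(F_1)$ instead of $|F_1|$, which is still a constant. The rest of the graph must then contain an induced $F_2$, yielding the forbidden $F_1+F_2$.

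Second, I would attack the path case $F=P_k$ by induction on $k$ via a BFS-layering argument modelled on the proof of Theorem~\ref{thm:K3+Pk}. Given a connected $\widehat{G}\in\F{\{\widehat{(K_4,-)}\}}$ with $\chi_b(\widehat{G})$ very large and a vertex $u$, one considers the layers $\widehat{G}_d$ of vertices at distance $d$ from $u$. Each $\widehat{G}_d$ sits inside the closed neighborhood of any of its lower-layer neighbors, so has $\chi_b\le g(P_k)$. If $\chi_b(\widehat{G})$ exceeds a suitable function of $k$ and $g(P_k)$, some layer has $\chi_b$ large enough to host a large-$\chi_b$ component $\widehat{C}$, from which we remove the neighborhood of a single connecting vertex $u_{i-1}$ (losing at most $g(P_k)$ colors, not just $1$ as in the $\widehat{(K_3,-)}$ case) and recurse to extract an induced path of length $k-1$ inside $\widehat{C}$, which we then extend through $u_i,u_{i-1},\dots,u$ into an induced $P_{k+1}$, a contradiction.

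The main obstacle is precisely the accounting in this inductive step. In the $\widehat{(K_3,-)}$ setting, removing the neighborhood of a single vertex costs only one color, so the induction proceeds cleanly with the exponential recursion $\chi_b \le 2^{k}-1$. In the $\widehat{(K_4,-)}$ setting, the quantity $g(P_k)$ is itself exponential in $k$, so after removing a neighborhood the remaining $\chi_b$ may not be large enough to invoke induction on paths of length $k-1$ with the required slack. Overcoming this almost certainly requires either a sharper understanding of the structure of $\F{\{\widehat{(K_3,-)},P_k\}}$ (e.g.\ a polynomial or even linear bound on $\chi_b$ in that class, much better than the $2^{k-2}$ of Theorem~\ref{thm:K3+Pk}) or a fundamentally different argument that avoids peeling off a whole neighborhood at each step, for instance by using a \emph{templated} Gy\'arf\'as-type partition into paths where the templates already absorb the $\widehat{(K_3,-)}$-free neighborhoods. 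This is consistent with the table in the paper, which marks the $P_5$ case as unknown already at the smallest non-trivial length: the naive induction breaks exactly where $g(P_k)$ starts to outrun the benefit of descending one layer.
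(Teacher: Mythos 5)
Your foundational observation is false, and the whole plan collapses on it. You claim that in a $\widehat{(K_4,-)}$-free signed graph the closed neighborhood of a vertex $u$ is $\widehat{(K_3,-)}$-free, because (after switching so that every edge at $u$ is positive) a negative triangle in $N(u)$ would complete a $\widehat{(K_4,-)}$. This is only true when the triangle in $N(u)$ has \emph{three} negative edges in that particular signature. A triangle in $N(u)$ with exactly one negative edge is still a negative triangle, i.e.\ an induced $\widehat{(K_3,-)}$, but together with $u$ it forms a $K_4$ with exactly two negative triangles, whereas $(K_4,-)$ has four; by \Cref{lem:Zaslavsky} that $K_4$ is not switching equivalent to $(K_4,-)$, so nothing is forbidden. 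Consequently you cannot apply \Cref{thm:K3+Pk} inside neighborhoods, and your constant $g(F)$ does not exist by any result in the paper. What $\widehat{(K_4,-)}$-freeness actually gives is that $N(u)$, with the switched signature, avoids the specific 2-edge-colored subgraphs $(K_3,-)$ and $(K_4,M)$; whether the class $\F\{(K_3,-),(K_4,M),P_k\}$ has bounded balanced chromatic number is exactly \Cref{conj:K4FreeNieghborhood-2} (equivalently \Cref{conj:K4FreeNieghborhood}), which the paper shows is \emph{equivalent} to the very conjecture you are trying to prove, and which it settles only for $k=4$ (\Cref{thm:K4_P4_locally_bounded}, via cograph/full-join structure), yielding \Cref{thm:linearforset}. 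So your argument is circular at its first step; the statement remains a conjecture in the paper. Your disjoint-union reduction has the same defect, since it also presupposes $g(F_1)$; the paper's \Cref{lem:magic} avoids this by making the bounded chromatic number of common neighborhoods an explicit hypothesis on the hereditary class.

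A secondary point: the obstacle you single out (that $g(P_k)$ is exponential, so the layer-peeling accounting fails) is not where the difficulty lies. \Cref{thm:NeighborhoodImpliesUniversalBound} shows that the Gy\'arf\'as layering argument goes through verbatim with \emph{any} uniform bound $b$ on the balanced chromatic number of closed neighborhoods, giving $\chi_b\leq b2^{k-3}$; since induced subgraphs inherit the neighborhood bound, the size of $b$ relative to $k$ is irrelevant to the induction. The genuine missing ingredient is the existence of such a $b$ at all, i.e.\ the neighborhood conjecture above, not the quantitative slack in the recursion.
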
 
	
	As an approach to this conjecture, we first show that it would be enough to prove the conjecture for when $F$ is just a path. Then we verify it for $F=P_4$, concluding the conjecture for when each component of $F$ is a path of length at most 3. The first step is an extension of \Cref{thm:K3+Pk} for signed graphs with no induced $\widehat{(K_4,-)}$. Observe that a key element in the proof of  \Cref{thm:K3+Pk} is the fact that the (closed) neighborhood of each vertex is balanced, that is a consequence of the assumption that there is no induced $\widehat{(K_3,-)}$. When $\widehat{(K_4,-)}$ is forbidden instead, we do not know if the neighborhood of a vertex has bounded balanced chromatic number (see \Cref{conj:K4FreeNieghborhood}). Thus, in order to extend \Cref{thm:K3+Pk}, we add this as an assumption.
	
	\begin{theorem}\label{thm:NeighborhoodImpliesUniversalBound}
		Given two positive integers $b$ and $k$, with $k\geq 3$, let $\widehat{G} \in \F{\set{\widehat{(K_4,-)}, P_k}}$ be a signed graph with the property that the closed neighborhood of each vertex admits a balanced $b$-coloring. Then $\chi_b(\widehat{G})\leq b2^{k-3}$.
	\end{theorem}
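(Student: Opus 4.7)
The plan is to mimic the Gyárfás-style BFS argument from the proof of \Cref{thm:K3+Pk}, weakening the triangle-free assumption ``each closed neighborhood induces a balanced signed graph'' to the hypothesis that each closed neighborhood admits a balanced $b$-coloring. I would prove by induction on $k \geq 3$ the following strengthened statement: \emph{if $\widehat{G}$ is a connected signed graph in $\F{\set{\widehat{(K_4,-)}}}$ whose closed neighborhoods admit a balanced $b$-coloring and satisfies $\chi_b(\widehat{G}) > b \cdot 2^{k-3}$, then every vertex $u$ of $\widehat{G}$ is the starting point of an induced $P_k$.} The theorem itself follows by working component by component and using that $\chi_b$ of a disconnected signed graph is the maximum over its components.

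The base case $k = 3$ is immediate: if no induced $P_3$ starts at $u$, then every shortest path from $u$ has length at most $1$, so $V(\widehat{G}) = N[u]$ and the hypothesis forces $\chi_b(\widehat{G}) \leq b$, contradicting $\chi_b(\widehat{G}) > b$.

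For the inductive step, perform BFS from $u$ to obtain layers $\widehat{G}_0 = \set{u}, \widehat{G}_1, \widehat{G}_2, \ldots$. Since shortest paths are induced, the absence of an induced $P_k$ starting at $u$ implies $V(\widehat{G}) = \widehat{G}_0 \cup \cdots \cup \widehat{G}_{k-2}$. Coloring $\widehat{G}_0 \cup \widehat{G}_1 \subseteq N[u]$ with $b$ balanced classes and each $\widehat{G}_i$ ($i \geq 2$) with a disjoint palette of balanced classes yields a global balanced coloring, so
\[
\chi_b(\widehat{G}) \;\leq\; b + \sum_{i=2}^{k-2} \chi_b(\widehat{G}_i).
\]
If every $\chi_b(\widehat{G}_i) \leq b \cdot 2^{k-i-2}$, the identity $\sum_{i=2}^{k-2} 2^{k-i-2} = 2^{k-3}-1$ gives $\chi_b(\widehat{G}) \leq b \cdot 2^{k-3}$, a contradiction. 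Hence some $i \in \set{2, \ldots, k-2}$ admits a component $\widehat{G}_i^1$ of $\widehat{G}_i$ with $\chi_b(\widehat{G}_i^1) > b \cdot 2^{k-i-2}$. Pick $u_{i-1} \in \widehat{G}_{i-1}$ adjacent to $\widehat{G}_i^1$; since $\widehat{G}_i^1 \cap N(u_{i-1}) \subseteq N[u_{i-1}]$ inherits a balanced $b$-coloring, a component $C$ of $\widehat{G}_i^1 - N(u_{i-1})$ achieves $\chi_b(C) > b(2^{k-i-2}-1) \geq b \cdot 2^{k-i-3}$ whenever $i \leq k-3$. Choosing $u_i \in \widehat{G}_i^1 \cap N(u_{i-1})$ adjacent to $C$ and setting $C' = C \cup \set{u_i}$, the induction hypothesis at parameter $k' = k-i \geq 3$ applied to $C'$ (which inherits every ambient hypothesis as an induced subgraph) starting at $u_i$ produces an induced $P_{k-i}$ of $C'$ beginning at $u_i$.

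Concatenating a shortest $u$-$u_i$ path $u_0 u_1 \cdots u_i$ with this induced $P_{k-i}$ yields a path on $k$ vertices starting at $u$. It is induced because the prefix is a shortest path, the suffix is induced in $C'$, and no cross-chord can exist: each tail vertex lies in $\widehat{G}_i \setminus N(u_{i-1})$, hence is at BFS distance exactly $i$ from $u$ (ruling out adjacency with $u_j$ for $j \leq i-2$) and is not adjacent to $u_{i-1}$ either. The boundary case $i = k-2$, which would require invoking the induction at parameter $2$, is handled directly: we only need a single vertex of the (nonempty) component $C$ adjacent to $u_i$, which exists by the choice of $u_i$. The main delicate step is the arithmetic bookkeeping --- in particular the inequality $2^{k-i-2} - 1 \geq 2^{k-i-3}$, valid precisely for $i \leq k-3$, and the clean isolation of the $i = k-2$ boundary case; everything else is a direct adaptation of the proof of \Cref{thm:K3+Pk} with the scalar $b$ threaded through each estimate.
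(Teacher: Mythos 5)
Your proposal is correct and takes essentially the same route as the paper: the same Gyárfás-style BFS-layer induction on $k$ with the strengthened claim that every vertex of a connected counterexample starts an induced $P_k$, which the paper itself carries out by deferring to the argument of \Cref{thm:K3+Pk}. The only differences are presentational — you spell out the arithmetic bookkeeping and isolate the boundary case $i=k-2$ explicitly, details the paper leaves implicit.
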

	
	\begin{proof}
		We prove the claim by induction on $k$ for the family of all such graphs. More precisely we claim the following. Let $\widehat{G}$ be a connected signed graph with no induced $\widehat{(K_4,-)}$ whose balanced chromatic number is larger than $b2^{k-3}$, but with the property that the closed neighborhood of each vertex admits a balanced $b$-coloring. Then for each vertex $u$ of $\widehat{G}$ there is an induced path of length at least $k-1$ starting at $u$. 
		
		For $k=3$, if there is no $P_3$ starting at $u$, then $N[u]$ spans the whole graph and, therefore, by assumption, $\chi_b(\widehat{G})\leq b$. Given $k\geq 4$, assume that the claim holds for every value up to $k-1$ and consider a signed graph $\widehat{G}$ satisfying the condition for $k$. Suppose that there is a vertex $u$ of $\widehat{G}$ not satisfying the conclusion. That is to say: $\set{u}, \widehat{G}_1, \widehat{G}_2, \ldots, \widehat{G}_{k-3}$ cover all of the vertices of $\widehat{G}$, where $\widehat{G}_i$ is the subgraph of $\widehat{G}$ induced by the vertices at distance $i$ from $u$. Since $\chi_b(\widehat{G}) > b2^{k-3}$, one of these subgraphs, say $\widehat{G}_{i}$, has balanced chromatic number larger than $b2^{k-i-3}+b$.
		Then, as in the proof of \Cref{thm:K3+Pk}, by taking a suitable component of $\widehat{G}_{i}$, selecting a vertex $u_{i}$ with a neighbor $u_{i-1}$ in $\widehat{G}_{i-1}$, deleting all but one of the neighbors of $u_{i-1}$ in $\widehat{G}_{i}$, then applying induction, we get an induced path of length $k-i-1$ in $\widehat{G}_{i}$ with only one connection to $u_{i-1}$. This connection then can be extended with a shortest path connecting $u_{i-1}$ to $u$ to produce an induced path of length $k-1$.  
	\end{proof}

	In light of this development, the following relaxation of \Cref{conj:K4+BambooForest} seems to be essential for resolving the conjecture. 
	
	\begin{conjecture}\label{conj:K4FreeNieghborhood}
		For any positive integer $k$, there exists a positive integer $b_k$ such that for any signed graph $\widehat{G}\in \F{\set{\widehat{(K_4,-)}, P_k}}$ and any vertex $u$ of it, the closed neighborhood of $u$ has balanced chromatic number at most $b_k$.  
	\end{conjecture}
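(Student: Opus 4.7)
The plan is to perform a local reduction via switching, then translate the conjecture into a 2-edge-colored Gyárfás-Sumner statement on neighborhoods. First, switch at $u$ so that every edge incident to $u$ is positive, and set $H = \widehat{G}[N(u)]$. Then the $\widehat{(K_4,-)}$-freeness of $\widehat{G}$ forces a very clean local condition on $H$: there is no all-negative triangle inside it, because any three vertices $v_1, v_2, v_3 \in N(u)$ whose three pairwise edges are all negative would, together with $u$, induce a $K_4$ whose four triangles are all unbalanced, hence switching-equivalent to $(K_4,-)$. Conversely, any $\widehat{(K_4,-)}$ through $u$ produces, after this switch, an all-negative triangle in $H$. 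So the negative subgraph $H^-$ is triangle-free, and $H$ is induced $P_k$-free as an induced subgraph of $G$. The problem thus reduces to bounding $\chi_b(H)$ for every 2-edge-colored graph $H$ whose underlying graph is $P_k$-free and whose negative subgraph is triangle-free.

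By \Cref{lem:no_switch}, $\chi_b(H) \leq \chi(H^-)$, so it suffices to bound $\chi(H^-)$. Since $H^-$ is triangle-free, the classical Gyárfás theorem for paths gives $\chi(F) \leq k-1$ for every triangle-free $P_k$-free graph $F$, so it would be enough to show that $H^-$ itself is $P_k$-free. Suppose, for contradiction, that $H^-$ contains an induced path $P = v_1 v_2 \cdots v_k$. By inducedness in $H^-$, every non-consecutive pair $v_i v_j$ is either a non-edge or a positive edge of $H$. If all such pairs are non-edges, then $P$ is induced in $H$, contradicting the $P_k$-freeness of the underlying graph. Otherwise, some positive chord exists, and I would try to combine it with the matching of path edges $v_iv_{i+1}$ to exhibit a 4-vertex induced subgraph switching-equivalent to $(K_4,-)$; one would likely start from a much longer induced path $P_m \subseteq H^-$, with $m$ a suitable function of $k$, and apply a Ramsey-type pigeonhole on the pattern of positive chords, to extract either an induced $P_k$ in $H$ or a $\widehat{(K_4,-)}$.

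The main obstacle is precisely this last step. A 4-vertex subset drawn from the endpoints of a long induced negative path whose $H$-induced subgraph is exactly a $2K_2$ fails to be a $K_4$, so it does not directly produce a $\widehat{(K_4,-)}$; one genuinely needs to find four vertices spanning a $K_4$. Moreover, a carefully chosen sparse pattern of positive chords could a priori destroy every induced $P_k$ in $H$ while avoiding every $\widehat{(K_4,-)}$ locally, so a purely local forbidden-subgraph argument within a single long path seems insufficient. A likely workaround is to invoke the global $\widehat{(K_4,-)}$-freeness of $\widehat{G}$, for instance by examining how the vertices of $P$ connect to vertices outside $N[u]$ (where additional positive or negative edges give extra structural constraints), and to combine this with an induction on $k$ together with repeated neighborhood analysis rather than a single one-shot Ramsey argument. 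Making this quantitative is what I expect to be the real technical heart of any proof of the conjecture.
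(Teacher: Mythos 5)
The statement you are trying to prove is not a theorem of the paper but an open conjecture: the paper only (i) reformulates it, via exactly the switching-at-$u$ step you begin with, as the statement that the class $\F{\{(K_3,-),(K_4,M),P_k\}}$ of 2-edge-colored graphs has bounded balanced chromatic number (\Cref{conj:K4FreeNieghborhood-2}, shown equivalent in \Cref{prop:SwitchVSColored}), and (ii) proves the case $k=4$ (\Cref{thm:K4_P4_locally_bounded}), using the cograph structure of $P_4$-free graphs and a full-join lemma. So no complete proof exists to compare against, and your own text concedes that the ``real technical heart'' is missing; as it stands this is a reduction plus a sketch of an attack, not a proof.

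Beyond the gap you acknowledge, there is a more serious flaw: your reduction discards an essential constraint, and the target statement you reduce to is false. After switching at $u$, the neighborhood $H$ is not merely ``$P_k$-free with triangle-free negative part'': it also contains no induced $(K_4,M)$ (the $K_4$ whose negative edges form a perfect matching), since that configuration is itself switching equivalent to $(K_4,-)$. Your proposed lemma --- bound $\chi(H^-)$ for every 2-edge-colored graph whose underlying graph is $P_k$-free and whose negative subgraph is triangle-free --- fails badly without this extra exclusion: take a triangle-free (indeed high-girth) graph of arbitrarily large chromatic number, declare its edges negative, and join every non-adjacent pair by a positive edge (the positive completion $PC(\mathcal{G}_5)$ already used in the paper). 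The underlying graph is complete, hence has no induced $P_3$, and the negative subgraph is triangle-free with unbounded chromatic number. These examples are exactly what the $(K_4,M)$ exclusion kills (two disjoint negative edges in such a completion span a $(K_4,M)$), which is why the paper's partial result leans on that condition through \Cref{lem:FullJoin}; any viable attack on the conjecture must use it, so the Ramsey-type argument along a long negative path that you sketch cannot succeed in the generality you set up.
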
 
	
	In fact the two conjectures are equivalent: If \Cref{conj:K4+BambooForest} holds, then we can take the bound on the balanced chromatic number of $\F{\set{\widehat{(K_4,-)}, P_k}}$ to be $b_k$. Conversely, if \Cref{conj:K4FreeNieghborhood} holds, then we apply \Cref{thm:NeighborhoodImpliesUniversalBound} to get a bound on the balanced chromatic number of $\F{\set{\widehat{(K_4,-)}, P_k}}$.
	
	Therefore, we aim at understanding better the conditions on the neighborhood. To that end, considering a vertex $u$ of $\widehat{G}\in \F{\set{\widehat{(K_4,-)}, P_k}}$, we first switch at each neighbor of $u$ which is adjacent to it by a negative edge. That renders us with a vertex $u$ all whose neighbors are connected to it by positive edges. The fact that $\widehat{G}\in \F{\set{\widehat{(K_4,-)}, P_k}}$ has two implications. The first is that the open neighborhood $\widehat{G}_u$ of the vertex $u$ is $P_k$-free, that is because it is an induced subgraph of $\widehat{G}$. The second is that $\widehat{G}_u$ has neither $(K_3,-)$ nor $(K_4, M)$ as induced subgraph. Here $(K_4, M)$ is the signed graph on $K_4$ with a matching of size two being the negative edges (see \Cref{fig:forb_graphs}). We note that these (induced) subgraphs are forbidden only with the given signature. The first is forbidden, as otherwise, together with $u$, it would form a subgraph switching equivalent to $(K_4,-)$. The second is itself switching equivalent to $(K_4,-)$. Moreover, this is the only switching equivalent copy of $(K_4, -)$ which has no $(K_3,-)$ subgraph. Overall \Cref{conj:K4FreeNieghborhood} can be reformulated as follows.

	\begin{conjecture}\label{conj:K4FreeNieghborhood-2}
		For any positive integer $k$, there exists a positive integer $b_k$ such that any signed graph in $\F{\{(K_3,-), (K_4,M), P_k\}}$ has balanced chromatic number at most $b_k$.  
	\end{conjecture}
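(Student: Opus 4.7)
The plan is to reduce the problem to bounding $\chi(\widehat{G}^{-})$, the chromatic number of the negative-edge subgraph, which suffices by \Cref{lem:no_switch}. Since $(K_3,-)$ is forbidden, $\widehat{G}^{-}$ is automatically triangle-free. Gy\'arf\'as' classical theorem guarantees that triangle-free graphs with no induced $P_m$ have chromatic number bounded by a function of $m$. So the goal reduces to showing that, under the hypotheses, $\widehat{G}^{-}$ admits no induced path of length more than some $h(k)$. I would proceed by a structural analysis of how induced paths in $\widehat{G}^{-}$ interact with positive chords in $\widehat{G}$.

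The first step is a clique-local lemma: within any clique of the underlying graph $G$, the negative subgraph is $\{K_3, 2K_2\}$-free, since a pair of disjoint negative edges spanning four vertices of a clique, with no further negative edges, is precisely a $(K_4,M)$. By Wagon's theorem, $\{K_3,2K_2\}$-free graphs are $3$-colorable, so the balanced chromatic number inside any single clique of $G$ is at most $3$. The second step is the path analysis: consider an induced path $v_1 v_2 \cdots v_m$ in $\widehat{G}^{-}$. Each non-consecutive pair $v_i v_j$ is either absent in $G$ or positive in $\widehat{G}$. If few such chords are present, then $v_1 v_2 \cdots v_m$ is already an induced path in $G$, forcing $m \leq k-1$. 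If many chords are present, then among four path-vertices of the form $v_i, v_{i+1}, v_j, v_{j+1}$ with $j > i+1$, the induced $K_4$ in $\widehat{G}$ contains exactly the two disjoint negative edges $v_iv_{i+1}$ and $v_jv_{j+1}$, producing a forbidden $(K_4,M)$. Balancing these two regimes via a Ramsey-style or inductive argument should yield $m \leq h(k)$.

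For the base cases ($k = 4$ and $k = 5$), I would give direct arguments. For $k = 4$, the underlying graph is a cograph, so it decomposes recursively as a disjoint union or join of smaller cographs. Disjoint unions preserve $\chi_b$, and the forbidden patterns across a join $\widehat{G_1} + \widehat{G_2}$ severely constrain the signs of cross-edges: any three pairwise positively connected vertices in $V(G_1)$ restrict the cross-edges to a vertex of $V(G_2)$, by repeated application of the $(K_3,-)$ and $(K_4,M)$ forbids. Exploiting this, one can switch so that cross-edges form a bipartite-like pattern, and then recurse on the vertex count. For $k=5$ one would instead lean on the well-developed structure theory of $P_5$-free graphs adapted to the signed setting.

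The main obstacle is the intermediate regime of the path analysis in Step~2: the $(K_4, M)$ forbid is purely a four-vertex local condition, and upgrading it to a global bound on induced negative paths in $\widehat{G}^{-}$ will require a careful interpolation between ``few chords, hence an induced $P_m$ in $G$'' and ``many chords, hence a forbidden $(K_4,M)$''. The middle region, where a medium density of positive chords must exist on the path but no single quadruple is immediately forbidden, is precisely where the delicate combinatorics live; I expect to need either an induction-within-induction, or a carefully chosen auxiliary graph on the chords themselves whose chromatic or Ramsey parameters can be controlled. This is presumably the reason the statement is phrased as a conjecture, and is where the bulk of the technical work will lie.
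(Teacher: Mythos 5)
You should first be aware that the statement you are proving is stated in the paper as a \emph{conjecture}: the paper does not prove \Cref{conj:K4FreeNieghborhood-2} for general $k$. It only establishes the case $k=4$ (\Cref{thm:K4_P4_locally_bounded}, via the cograph join decomposition, \Cref{lem:FullJoin}, and a maximal negative-edge-free set whose complement's components are modules, yielding the bound $6$ on $\chi(\widehat{G}^{-})$), notes via \Cref{prop:SwitchVSColored} that the conjecture is equivalent to \Cref{conj:K4FreeNieghborhood}, and extends the $k=4$ case to linear forests with small components through \Cref{lem:magic}. So there is no proof in the paper to match yours against; the only question is whether your proposal settles the conjecture, and it does not.

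The genuine gap is exactly where you locate it yourself, but it is worth being precise about why it is fatal as written. Your plan reduces the conjecture, via \Cref{lem:no_switch} and Gy\'arf\'as' theorem for triangle-free $P_m$-free graphs, to showing that $\widehat{G}^{-}$ has no long \emph{induced} path. For the dichotomy you propose, the ``many chords'' branch only fires when two disjoint negative path edges $v_iv_{i+1}$ and $v_jv_{j+1}$ (with $j\geq i+3$, since for $j=i+2$ the pair $v_{i+1}v_j$ is itself a negative path edge) have \emph{all four} cross pairs present as positive edges of $G$; a chord pattern of intermediate density, avoiding a complete positive $K_{2,2}$ between every such pair of negative edges while still destroying all long induced paths of $G$, escapes both branches, and you give no mechanism (Ramsey-type or otherwise) to rule it out. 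Worse, the reduction itself may be strictly stronger than the conjecture: bounded $\chi(\widehat{G}^{-})$ does not require bounded induced path length in $\widehat{G}^{-}$, and you offer no evidence that the path-length bound holds in $\F{\{(K_3,-),(K_4,M),P_k\}}$, so even if the conjecture is true your route could be a dead end. Finally, your base case $k=4$ is only a sketch (``switch so that cross-edges form a bipartite-like pattern, and then recurse''); the actual argument in the paper needs the asymmetry of \Cref{lem:FullJoin} (a negative edge on one side of a join forces $3$-colorability of the negative subgraph on the other side), the maximality/module argument for the set $A'$, and the odd-cycle-plus-$(K_4,M)$ contradiction, so even this case should not be regarded as done in your write-up. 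Your clique-local observation (triangle-free and $2K_2$-free negative subgraph inside a clique, hence $3$-colorable by Wagon) is correct and is in the same spirit as \Cref{lem:FullJoin}, but it is a local fact and does not by itself control the global structure.
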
  
	
	\begin{figure}[h]
		\centering
		\begin{tikzpicture}[scale = .5]
	\begin{pgfonlayer}{nodelayer}
		\node [style=small white] (0) at (0, 1) {};
		\node [style=small white] (1) at (-1.25, -1) {};
		\node [style=small white] (2) at (1.25, -1) {};
		\node [style=small white] (3) at (3, 1) {};
		\node [style=small white] (4) at (3, -1) {};
		\node [style=small white] (5) at (5, -1) {};
		\node [style=small white] (6) at (5, 1) {};
	\end{pgfonlayer}
	\begin{pgfonlayer}{edgelayer}
		\draw [style=edge red] (0) to (1);
		\draw [style=edge red] (1) to (2);
		\draw [style=edge red] (2) to (0);
		\draw [style=edge red] (3) to (4);
		\draw [style=edge red] (6) to (5);
		\draw [style={blue_edge}] (3) to (6);
		\draw [style={blue_edge}] (6) to (4);
		\draw [style={blue_edge}] (4) to (5);
		\draw [style={blue_edge}] (5) to (3);
	\end{pgfonlayer}
\end{tikzpicture}
		\caption{$(K_3,-)$ and $(K_4,M)$}
		\label{fig:forb_graphs}
	\end{figure}
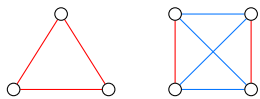

	\begin{proposition}\label{prop:SwitchVSColored}
		For any given integer $k$, \Cref{conj:K4FreeNieghborhood} and \Cref{conj:K4FreeNieghborhood-2} are equivalent.
	\end{proposition}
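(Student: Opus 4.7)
The plan is to prove each implication by a direct construction, using the observation already sketched in the paragraphs preceding the proposition: after switching at the negative neighbors of any vertex $u$, the condition of being $\widehat{(K_4,-)}$-free on the whole graph translates, on the open neighborhood of $u$, to being both $(K_3,-)$-free and $(K_4,M)$-free.

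For the direction \Cref{conj:K4FreeNieghborhood-2} $\Rightarrow$ \Cref{conj:K4FreeNieghborhood}, I will start with $\widehat{G} \in \F{\{\widehat{(K_4,-)}, P_k\}}$ and any $u \in V(\widehat{G})$, switch at every negative neighbor of $u$ to produce $\widehat{G}'$ with all edges at $u$ positive, and set $\widehat{H} := \widehat{G}'[N(u)]$. The verification that $\widehat{H} \in \F{\{(K_3,-), (K_4,M), P_k\}}$ is immediate: $P_k$-freeness is inherited as a graph property; an induced $(K_3,-)$ in $\widehat{H}$ together with $u$ would form in $\widehat{G}'$ an induced $K_4$ with every triangle negative, hence switching equivalent to $(K_4,-)$ and forbidden; and an induced $(K_4,M)$ in $\widehat{H}$ is itself switching equivalent to $(K_4,-)$. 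Applying \Cref{conj:K4FreeNieghborhood-2} to $\widehat{H}$ then bounds $\chi_b(\widehat{G}[N(u)]) = \chi_b(\widehat{H}) \leq b_k$, and therefore $\chi_b(\widehat{G}[N[u]]) \leq b_k + 1$.

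For the direction \Cref{conj:K4FreeNieghborhood} $\Rightarrow$ \Cref{conj:K4FreeNieghborhood-2}, I will invert the construction. Given $\widehat{H} \in \F{\{(K_3,-), (K_4,M), P_k\}}$, I build $\widehat{G}$ by adding a new vertex $u$ joined to every vertex of $\widehat{H}$ by a positive edge. The main step is to check $\widehat{G} \in \F{\{\widehat{(K_4,-)}, P_k\}}$. For $P_k$-freeness (with $k \geq 4$), any induced $P_k$ through $u$ would force $u$ to be internal on the path, but then a vertex at distance $2$ from $u$ along the path would be a non-neighbor of $u$, contradicting universality; so every induced $P_k$ sits inside $\widehat{H}$. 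For $\widehat{(K_4,-)}$-freeness, an induced $K_4$ through $u$ that is switching equivalent to $(K_4,-)$ must have every triangle negative; since $uv_i$ and $uv_j$ are positive, the triangle $uv_iv_j$ is negative iff $\sigma(v_iv_j) = -$, forcing an induced $(K_3,-)$ in $\widehat{H}$, a contradiction. A $\widehat{(K_4,-)}$ avoiding $u$ lies in $\widehat{H}$; but the excerpt already records that the unique representative of the switching class of $(K_4,-)$ containing no $(K_3,-)$ is $(K_4,M)$, again forbidden. Applying \Cref{conj:K4FreeNieghborhood} to the universal vertex $u$ then gives $\chi_b(\widehat{H}) \leq \chi_b(\widehat{G}[N[u]]) = \chi_b(\widehat{G}) \leq b_k$.

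The main piece of friction I foresee is the small-$k$ regime. For $k = 3$ the universal-vertex construction can create an induced $P_3$ whenever $\widehat{H}$ has two non-adjacent vertices, so the forward argument above only directly handles the case where $\widehat{H}$ is a clique. This case is easy to dispatch without the construction: any $P_3$-free signed graph is a disjoint union of signed cliques, and in a signed clique avoiding both $(K_3,-)$ and $(K_4,M)$ the negative edges form a triangle-free graph with no matching of size~$2$, hence a star, which is bipartite, giving $\chi_b \leq 2$. The cases $k \leq 2$ are trivial. Combined with the general $k \geq 4$ arguments above, this completes the equivalence for every $k$.
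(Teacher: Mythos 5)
Your proposal follows essentially the same route as the paper: the direction from \Cref{conj:K4FreeNieghborhood-2} to \Cref{conj:K4FreeNieghborhood} is the switching observation already made in the paragraphs preceding the proposition, and the converse uses exactly the paper's construction $(G,\sigma)^*$ of adding a universal vertex joined by positive edges, with the same bookkeeping (the bound for the closed neighborhood being at most one more than $b_k$). You go further than the paper in verifying the membership claims, and you are right to flag the small-$k$ regime: the paper's one-line ``if and only if'' is indeed false for $k=3$ (an edgeless $\widehat H$ on two vertices is $P_3$-free while $\widehat H^*$ is not), so a separate argument is genuinely needed there.

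However, your patch for $k=3$ rests on a false structural claim. In a signed clique with no induced $(K_3,-)$ and no induced $(K_4,M)$, the negative edges need not avoid a matching of size two, and need not form a star: two disjoint negative edges on four vertices are forbidden only when the remaining four edges are all positive, so for instance a negative $P_4$ inside $K_4$ is allowed, and the paper's own envelope (\Cref{fig:envelope}), a $K_5$ whose negative edges form a $C_5$, lies in $\F{\{(K_3,-),(K_4,M),P_3\}}$ while its negative subgraph is neither a star nor bipartite, so your conclusion ``hence a star, giving $\chi_b\leq 2$'' does not follow. What forbidding $(K_4,M)$ actually gives is that the negative subgraph contains no \emph{induced} $2K_2$; combined with triangle-freeness this bounds its chromatic number by $3$ (for any negative edge $uv$, the negative neighborhoods of $u$ and of $v$ and the set of vertices with no negative edge to either are three independent sets of the negative subgraph covering all vertices), hence $b_3\leq 3$ by \Cref{lem:no_switch} and the $k\leq 3$ cases still close — but via this corrected argument, not the one you gave. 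With that repair, your proof is complete and matches the paper's.
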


	\begin{proof}
		Given a signed graph $(G,\sigma)$, let $(G,\sigma)^*$ be the signed graph obtained from $(G, \sigma)$ by adding a universal vertex which is adjacent to all of the vertices with positive edges. The claim of the proposition follows from the fact that  $\widehat{(G, \sigma)}^*\in \F{\set{\widehat{(K_4,-)}, P_k}}$ if and only if $(G, \sigma) \in \F{\{(K_3,-), (K_4,M), P_k\}}$ and noting that the values of $b_k$ in \Cref{conj:K4FreeNieghborhood} will be at most one more than its value in \Cref{conj:K4FreeNieghborhood-2}.
	\end{proof}
	
	Given vertex-disjoint graphs $G_1$ and $G_2$, their \emph{full join}, denoted $G_1 \bowtie G_2$, is a graph on $V(G_1)\cup V(G_2)$ where each of the two sets induces the corresponding graph and each pair of vertices $x\in V(G_1)$ and $y\in V(G_2)$ is adjacent. The notion of full join is crucial in the study of $P_4$-free graphs.
	
	\begin{lemma}\label{lem:FullJoin}
		Assume that $(G_1 \bowtie G_2, \sigma) \in \F{\{(K_3,-), (K_4,M), P_k\}}$. If one of $\widehat{G}_1$ or $\widehat{G}_2$ contains a negative edge, then the subgraph of the other induced by its negative edges has chromatic number at most 3. 
	\end{lemma}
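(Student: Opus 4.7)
The plan is to exploit the full join to pick up a negative edge from the side that has one, use it to define a three-valued \emph{type} function on the other side, and then verify that this type function is a proper coloring of the negative-edge subgraph. Without loss of generality assume $\widehat{G}_1$ has a negative edge $xy$. Since $G_1 \bowtie G_2$ is a full join, every vertex $v \in V(G_2)$ is adjacent to both $x$ and $y$ in the underlying graph, so I define $t : V(G_2) \to \{+,-\}^2$ by $t(v) = (\sigma(xv), \sigma(yv))$. The two steps of the argument will be (i) show that $t$ avoids the value $(-,-)$, and (ii) show that $t$ is proper with respect to the negative edges of $\widehat{G}_2$.

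Step (i) is immediate: if $t(v) = (-,-)$ for some $v \in V(G_2)$, then the vertices $\{x,y,v\}$ induce a triangle whose three edges $xy, xv, yv$ are all negative, an induced $(K_3,-)$, contradicting the hypothesis. So $t$ takes at most the three values $\{(+,+),(+,-),(-,+)\}$. Step (ii) is the heart of the argument, and I plan to handle it by a case analysis on the common type of the endpoints of a negative edge $uv$ of $G_2$. If $t(u) = t(v) = (+,+)$, then the four vertices $\{x,y,u,v\}$ induce $K_4$ in $G_1 \bowtie G_2$ whose only negative edges are the disjoint pair $\{xy, uv\}$; this is exactly the forbidden $(K_4,M)$. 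If $t(u) = t(v) = (+,-)$, then on $\{y,u,v\}$ the edges $yu, yv$ are negative by the type and $uv$ is negative by assumption, producing an induced $(K_3,-)$; the symmetric case $t(u) = t(v) = (-,+)$ uses the triangle $\{x,u,v\}$. Together these rule out monochromatic negative edges, so $t$ is a proper $3$-coloring of $\widehat{G}_2^-$.

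The only subtlety to keep in mind while writing this out is that $(K_3,-)$ and $(K_4,M)$ are forbidden as $2$-edge-colored (i.e., not switched) induced subgraphs, so the exact signs read off by $t$ really do matter; this is precisely what makes a type-based coloring work, and there is no deeper obstacle to anticipate. Note also that the $P_k$-freeness hypothesis plays no role in this particular lemma.
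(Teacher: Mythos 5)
Your proof is correct and takes essentially the same route as the paper: your type classes $(-,+)$, $(+,-)$, $(+,+)$ are exactly the paper's partition of $V(\widehat{G}_2)$ into $N_{\widehat{G}_2}^-(x)$, $N_{\widehat{G}_2}^-(y)$ and $N_{\widehat{G}_2}^+(x)\cap N_{\widehat{G}_2}^+(y)$, with the same use of $(K_3,-)$-freeness to kill negative edges in the first two classes and $(K_4,M)$-freeness in the third. Your step (i) merely makes explicit what the paper leaves implicit, namely that $(K_3,-)$-freeness forces these three sets to partition $V(G_2)$.
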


	\begin{proof}
		Assume that $uv$ is a negative edge in $\widehat{G}_1$. Consider the partition of $V(\widehat{G}_2)$ into: 
		$$N_{\widehat{G}_2}^-(u),\ N_{\widehat{G}_2}^-(v),\ N_{\widehat{G}_2}^+(u)\cap N_{\widehat{G}_2}^+(v).$$ 
		Since $(G_1 \bowtie G_2, \sigma)$ has no induced $(K_3,-)$, neither of $N_{\widehat{G}_2}^-(u)$ and $N_{\widehat{G}_2}^-(v)$ induces a negative edge. As $(G_1 \bowtie G_2, \sigma)$ does not induce any $(K_4,M)$, there is no negative edge in $N_{\widehat{G}_2}^+(u)\cap N_{\widehat{G}_2}^+(v)$. Thus, we have a 3-coloring of $\widehat{G}_2^-$ as claimed. See \Cref{fig:FullJoin} for an illustration. 
	\end{proof}
	
	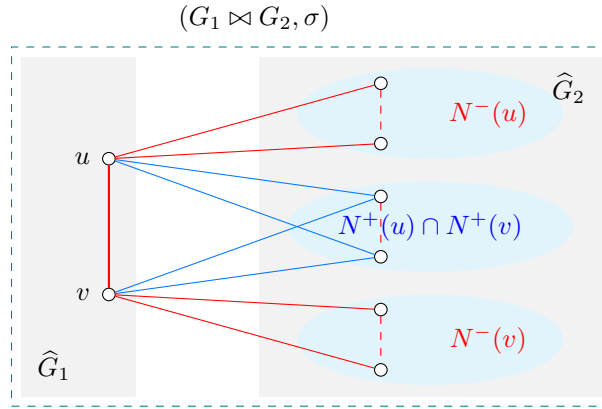
\begin{figure}[h!] 
		\centering 
		\begin{tikzpicture}[scale=0.5, box/.style={draw=none, fill=gray!10, thick}]

     %blocks 
    \node[box, minimum width=1.5cm, minimum height=4.5cm] (G1) {};
    \node[box, minimum width=4.5cm, minimum height=4.5cm, right=1.6cm of G1] (G2) {};
    
    \node[anchor=south west, xshift=0.1cm, yshift=0.1cm] at (G1.south west) {$\widehat{G}_1$};
    \node[anchor=north east, xshift=-0.1cm, yshift=-0.1cm] at (G2.north east) {$\widehat{G}_2$};
    
    \node[small white] (u) at ([xshift=0.8cm,yshift=1.8cm]G1) {};
    \node[left=1pt of u] {$u$};
    \node[small white] (v) at ([xshift=0.8cm,yshift=-1.8cm]G1) {};
    \node[left=1pt of v] {$v$};

    \node[draw=teal,dashed,fit=(G1)(G2)] {}; 
    
    \node[above=0.2cm] at ($(G1.north)!0.5!(G2.north)$) {$(G_1\bowtie G_2, \sigma)$};

    % blocks in G2
    \node[draw=none, fill=cyan!10, thick, ellipse,
      minimum width=3.5cm, minimum height=1.2cm,   yshift=1.5cm] (Nu-) at (G2){};
     \node[xshift=-1cm] at (Nu-.east)  {{\textcolor{red}{$N^-(u)$}}};

    \node[draw=none, fill=cyan!10, thick, ellipse,
      minimum width=3.5cm, minimum height=1.2cm,   yshift=0cm] (Nuv+) at (G2){\textcolor{blue}{$N^+(u)\cap N^+(v)$}};
     %\node[anchor=north, xshift=0.2mm, yshift=1mm] at (Nuv+.north)  {{\textcolor{blue}{$N^+(u)\cap N^+(v)$}}};

     \node[draw=none, fill=cyan!10, thick, ellipse,
      minimum width=3.5cm, minimum height=1.2cm,   yshift=-1.5cm] (Nv-) at (G2){};
         \node[xshift=-1cm] at (Nv-.east)  {{\textcolor{red}{$N^-(v)$}}};

    \node[small white] (x1) at ([xshift=-1.3cm,yshift=8mm]Nu-) {};
    \node[small white](y1) at ([xshift=-1.3cm,yshift=-8mm]Nu-) {};
    \draw[edge red] (x1)--(u)--(y1);
    \draw[edge red, dashed] (x1)--(y1);

  \node[small white] (x2) at ([xshift=-1.3cm,yshift=8mm]Nuv+) {};
    \node[small white] (y2) at ([xshift=-1.3cm,yshift=-8mm]Nuv+) {};
    \draw[blue_edge] (x2)--(u)--(y2);
    \draw[blue_edge] (x2)--(v)--(y2);
    \draw[edge red, dashed] (x2)--(y2);
    
   \node[small white] (x3) at ([xshift=-1.3cm,yshift=8mm]Nv-) {};
    \node[small white] (y3) at ([xshift=-1.3cm,yshift=-8mm]Nv-) {};
    \draw[edge red] (x3)--(v)--(y3);
    \draw[edge red, dashed] (x3)--(y3);

  \draw[red, thick](u)--(v);
    
%\draw[edge, bend left=35] (v) to (c5);

\end{tikzpicture} 
		\caption{Illustration of the proof of \Cref{lem:FullJoin}} 
		\label{fig:FullJoin} 
	\end{figure}
	
	\begin{corollary}\label{cor:FullJoin}
		If $(G_1 \bowtie G_2, \sigma) \in \F{\{(K_3,-), (K_4,M), P_k\}}$ and each of $\widehat{G}_1$ and $\widehat{G}_2$ contains a negative edge, then the negative edges induce a $6$-colorable graph, and hence $\chi_b(G_1 \bowtie G_2, \sigma)\leq 6$.	
	\end{corollary}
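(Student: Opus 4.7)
The plan is to apply \Cref{lem:FullJoin} twice, once in each direction, and then combine with \Cref{lem:no_switch}. Since $\widehat{G}_2$ contains a negative edge, \Cref{lem:FullJoin} (applied with the roles swapped) gives that the negative subgraph $\widehat{G}_1^-$ has chromatic number at most $3$. By symmetry, using a negative edge of $\widehat{G}_1$, the subgraph $\widehat{G}_2^-$ also has chromatic number at most $3$.

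Next I would use disjoint color palettes on the two sides: color $\widehat{G}_1^-$ with colors $\{1,2,3\}$ and color $\widehat{G}_2^-$ with colors $\{4,5,6\}$. Any negative edge of $(G_1\bowtie G_2, \sigma)$ either lies entirely in $\widehat{G}_1$, entirely in $\widehat{G}_2$, or crosses between them; the first two cases are handled by the proper colorings on each side, and the crossing case is automatic because vertices of $\widehat{G}_1$ and $\widehat{G}_2$ receive colors from disjoint sets. Thus the negative subgraph of $(G_1\bowtie G_2, \sigma)$ is $6$-colorable, and by \Cref{lem:no_switch} we obtain $\chi_b(G_1\bowtie G_2, \sigma)\leq \chi((G_1\bowtie G_2, \sigma)^-) \leq 6$.

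There is essentially no obstacle here; the corollary is a direct combinatorial assembly of the lemma applied in both directions, with the key observation being that disjoint palettes on the two sides automatically take care of the cross edges since every pair in $V(G_1)\times V(G_2)$ is adjacent in the join (whether the joining edge is positive or negative, the endpoints receive different colors).
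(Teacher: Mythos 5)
Your proof is correct and follows essentially the same route as the paper: the paper's proof is exactly the bound $\chi((G_1 \bowtie G_2, \sigma)^{-}) \leq \chi(\widehat{G}_1^{-})+\chi(\widehat{G}_2^{-})$ (your disjoint-palette argument, with cross negative edges harmless for the same reason) combined with \Cref{lem:FullJoin} applied in both directions, and then \Cref{lem:no_switch} for the balanced chromatic number. No gaps; your write-up merely makes explicit the palette argument the paper leaves implicit.
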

	
	\begin{proof}
		We have that $\chi((G_1 \bowtie G_2, \sigma)^{-}) \leq \chi(\widehat{G}_1^{-})+\chi(\widehat{G}_2^{-})$.
		By \cref{lem:FullJoin}, the graph induced by the negative edges on each side has chromatic number at most $3$.
	\end{proof}
	
	The following lemma generalises the result of \Cref{prop:K3-ForestToPath} in the perspective of using it with $(K_4,-)$. To better understand its statement, one can think of $\mcal G = \F{\{(K_3,-), (K_4,M)\}}$.

	\begin{lemma}
		\label{lem:magic}
		
		Let $\mcal G$ be a hereditary class of signed graphs, $F_1$ and $F_2$ two forests and $c$ a positive integer such that: Signed graphs in  $\mcal G$ with no induced $F_1$ (respectively $F_2$) have balanced chromatic number at most $c$. Furthermore, assume that for each signed graph $\widehat G \in \mcal G $ and for each induced copy $F_1'$ of $F_1$ in $\widehat G$, the common neighbors of $F_1'$ (with arbitrary signature) induce a $c$-colorable signed graph. Then for some integer $\phi(c)$, any signed graph in $\mcal{G}$ with no induced copy of $F_1+F_2$ is $\phi(c)$-colorable.
	\end{lemma}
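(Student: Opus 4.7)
The plan is to mirror the strategy of \Cref{prop:K3-ForestToPath}, using hypothesis (iii) in the role that the ``balanced neighborhood'' property played there. Let $\widehat{G}\in\mathcal{G}$ contain no induced copy of $F_1+F_2$; I aim to show that $\chi_b(\widehat{G})\le 2c+1$, so that one may take $\phi(c)=2c+1$.

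First I would dispose of the easy case: if $\widehat{G}$ contains no induced copy of $F_1$, the first hypothesis immediately gives $\chi_b(\widehat{G})\le c$. Otherwise, fix an induced copy $F_1'$ of $F_1$ in $\widehat{G}$ and set $W=V(F_1')\cup N_{\widehat{G}}(F_1')$, the closed neighborhood of $F_1'$. The key structural point is that $\widehat{G}-W$ is $F_2$-free: any induced copy of $F_2$ in $\widehat{G}-W$ would by construction be vertex-disjoint from $F_1'$ and have no edges to it, so together with $F_1'$ it would yield an induced $F_1+F_2$ in $\widehat{G}$, a contradiction. The second hypothesis then gives $\chi_b(\widehat{G}-W)\le c$.

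It remains to bound $\chi_b(\widehat{G}[W])$. Since $F_1'$ is acyclic (being a forest), $V(F_1')$ is trivially balanced and contributes only $1$ to $\chi_b$. Applying hypothesis (iii) to $F_1'$, the induced subgraph on its neighborhood satisfies $\chi_b(\widehat{G}[N(F_1')])\le c$. Summing these contributions yields $\chi_b(\widehat{G}[W])\le c+1$, and combining with the previous step gives $\chi_b(\widehat{G})\le \chi_b(\widehat{G}[W])+\chi_b(\widehat{G}-W)\le 2c+1$, as desired.

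The one delicate point in this plan is the reading of hypothesis (iii): if ``common neighbors of $F_1'$'' is meant as the strict intersection $\bigcap_{v\in V(F_1')}N(v)$ rather than the full neighborhood $N(F_1')$, then the bound on $\chi_b(\widehat{G}[N(F_1')])$ is no longer immediate. In that alternative reading, I would partition $N(F_1')=\bigsqcup_{\emptyset\neq S\subseteq V(F_1')}N_S$, where $N_S$ collects the outside vertices whose neighborhood inside $V(F_1')$ is exactly $S$; hypothesis (iii) then handles only the class $N_{V(F_1')}$ directly, and the proper-subset classes $N_S$ would require an induction on $|V(F_1)|$, viewing $F_1'[S]$ as a smaller induced subforest and bootstrapping the lemma for it. Controlling this induction and the resulting blow-up of the constant would be the main obstacle in that variant; the clean bound $\phi(c)=2c+1$ above depends on the more generous interpretation in which hypothesis (iii) already gives a $c$-bound on $\chi_b(\widehat{G}[N(F_1')])$.
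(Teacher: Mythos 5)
Your main argument rests on the ``generous'' reading of the third hypothesis, and that reading is not the correct one. ``Common neighbors of $F_1'$'' means the vertices adjacent to \emph{every} vertex of $F_1'$, i.e.\ $\bigcap_{v\in V(F_1')}N(v)$; this is exactly how the hypothesis is invoked in the paper's proof (``the vertices joined to all of $F_1'$''), and it is the only reading under which the lemma is usable: in the intended application, $\mcal G=\F\{(K_3,-),(K_4,M)\}$ with $F_1$ a path, and bounding the balanced chromatic number of the union $N(F_1')\supseteq N(v)$ for even a single vertex $v$ is essentially \Cref{conj:K4FreeNieghborhood}, which is open — so no such hypothesis could be supplied. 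Consequently your step $\chi_b(\widehat G[W])\le c+1$, hence the bound $\phi(c)=2c+1$, has no support; the entire difficulty of the lemma is precisely that only the common neighborhood is controlled, while the union of neighborhoods of $F_1'$ may have huge balanced chromatic number.

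Your fallback sketch identifies the right obstacle but does not overcome it: the hypothesis says nothing about common neighborhoods of copies of proper subforests $F_1'[S]$, so ``bootstrapping the lemma'' for them is not available as stated. The paper's proof deals with exactly this by a downward induction on induced subforests $T$ of $F_1$: for a $T$-piece $T'$ and a one-vertex extension $S$ of $T$ inside $F_1+K_1$, it bounds $\chi_b$ of the extension set $Ext_{T\to S}(T')$ by $c(|F_1|(|F_1|+1))^{2(|F_1|-|T|)}$. If such a set had larger balanced chromatic number, it would in particular contain an induced copy of $F_1$ (by the first hypothesis), and that copy is used to partition the extension set for a further extension $R$ into at most $|F_1|+1$ parts: one part consists of the non-neighbors of the copy (which is $F_2$-free since $\widehat G$ has no induced $F_1+F_2$, then the $F_2$-hypothesis applies) or, in the other adjacency case, of the common neighbors of the copy (where the third hypothesis applies), while each remaining part is itself an extension set of a strictly larger piece and is handled by the induction hypothesis; the resulting upper bound contradicts the assumed lower bound. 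Taking $T=\emptyset$ at the end gives $\phi(c)=c(|F_1|(|F_1|+1))^{2|F_1|}$. Some quantitative, piece-by-piece argument of this kind is needed; the union-neighborhood shortcut is not available under the lemma's actual hypotheses.
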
 
	
	%The following proof is inspired by \cite{?} from Chudnovsky, Scott and Seymour
	%Excluding pairs of graphs - http://arxiv.org/abs/1302.0812v1
	
	\begin{figure}[h]
		\centering
		\begin{tikzpicture}[scale = .5]
	\begin{pgfonlayer}{nodelayer}
		\node [style=small black] (0) at (-8, 0) {};
		\node [style=small black] (1) at (-6, 0) {};
		\node [style=white vertex] (2) at (-4, 0) {};
		\node [style=small black] (3) at (-2, 0) {};
		\node [style=grey vertex] (4) at (-4, 2) {};
		\node [style=none] (5) at (-4, 2) {r};
		\node [style=none] (6) at (-4, 0) {s};
		\node [style=small black] (7) at (2, 0) {};
		\node [style=small black] (8) at (4, 0) {};
		\node [style=small white] (9) at (6, 0.75) {};
		\node [style=small black] (10) at (8, 0) {};
		\node [style=small gray] (11) at (5, 2) {};
		\node [style=small white] (14) at (6, 0) {};
		\node [style=small white] (15) at (6, -0.75) {};
		\node [style=small white] (16) at (6, -1.5) {};
		\node [style=small gray] (17) at (5, 3.5) {};
		\node [style=small white] (18) at (6, -2.25) {};
		\node [style=small gray] (19) at (4, 2) {};
		\node [style=small gray] (20) at (8, 3.5) {};
		\node [style=small gray] (21) at (8, 2) {};
	\end{pgfonlayer}
	\begin{pgfonlayer}{edgelayer}
		\draw [style=Edge] (0) to (1);
		\draw [style=Edge] (1) to (2);
		\draw [style=Edge] (2) to (3);
		\draw [style=Edge] (7) to (8);
		\draw [style=Edge] (8) to (9);
		\draw [style=Edge] (9) to (10);
		\draw [style=Edge] (8) to (14);
		\draw [style=Edge] (14) to (10);
		\draw [style=Edge] (10) to (15);
		\draw [style=Edge] (15) to (8);
		\draw [style=Edge] (8) to (16);
		\draw [style=Edge] (16) to (10);
		\draw [style=Edge] (8) to (18);
		\draw [style=Edge] (18) to (10);
		\draw [style=Edge] (20) to (9);
		\draw [style=Edge] (20) to (14);
		\draw [style=Edge] (20) to (15);
		\draw [style=Edge] (20) to (16);
		\draw [style=Edge] (21) to (16);
		\draw [style=Edge] (15) to (21);
		\draw [style=Edge] (21) to (14);
		\draw [style=Edge] (9) to (21);
		\draw [style=Edge] (11) to (14);
		\draw [style=Edge] (19) to (15);
		\draw [style=Edge] (11) to (9);
		\draw [style=Edge] (9) to (14);
		\draw [style=Edge] (14) to (15);
		\draw [style=Edge] (15) to (16);
	\end{pgfonlayer}
\end{tikzpicture}
		\caption{Illustration of the proof of \Cref{lem:magic} with $F_1=F_2 = P_4$}
		On the left: $F_1^{*}$ , On the right: $\widehat G$ with $Ext_{T \ra S}(T')$ and $Ext_{T \ra R}(T')$ highlighted.\\
		$T$ (resp $T'$) is in black. $S = T\cup \set{s}, R = T\cup \set{r}$
		\label{fig:magic}
	\end{figure}
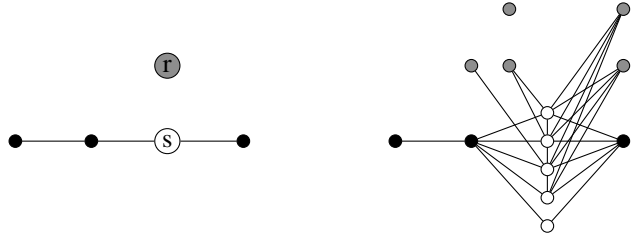

	\begin{proof}
		Throughout the proof $\widehat G$ is an element of $\mcal G \cap \F{\{F_1 + F_2\}}$. Given a subset $X$ of $V(G)$, the subgraph of $\widehat G$ induced by $X$ is denoted by $\widehat G[X]$ (which must also be a member of $\mcal G \cap \F{\{F_1 + F_2\}}$). Note that, in this proof, the graphs we consider are signed. However, since we mostly deal with forests, the signature will not play a big role. We introduce several notations before proceeding with the proof of the lemma.
		\begin{itemize}
			
			\item $F_1^*$ is the forest $F_1 + K_1$ with the added isolated vertex named $h_*$.
			
			\item A $T$-piece in $\widehat G$ is a pair $(T',f )$ where $T'$ is an induced subgraph of $\widehat G$ isomorphic to $T$ and $f$ is an isomorphism mapping $T$ to $T'$.
			
			\item For an induced subforest $T$ of $F_1$, a $1$-extension of $T$ (in $F_1$) is an induced subgraph $S$ of $F_1$, such that $V(S) = V(T) \cup \set{s}$ with $s \notin V(T)$.
			
			\item Given a $T$-piece $(T',f)$ of $\widehat G$ and a $1$-extension $S$ of $T$ in $F_1$, with $V(S) = V(T) \cup \set{s}$, the set $Ext_{T\ra S}(T',f)$, consists of the vertices $s'$ of $\widehat{G}$ such that $(V(T')\cup \set{s'}, f')$ is an $S$-piece, where $f'$ is the extension of $f$ mapping $s$ to $s'$.            
			When it is obvious from the context, we write $Ext_{T\ra S}(T')$.
			Formally, $$Ext_{T\ra S}(T',f) = \bigcap_{\substack {u \in T \\ u \adj[] s}} N(f(u)) \cap  \bigcap_{\substack { v \in T \\ u \not \adj[] s}} \overline{N(f(v))}. \ \ \ (*)$$
			
		\end{itemize}
		See \Cref{fig:magic} for an illustration showcasing two different extenders. 
		
		We prove the following statement by induction: 
		
		\begin{quote}
			\textbf{Claim:} For any $T$-piece $(T',f)$ of $F_1$, there exists a 1-extension $S$ of $T$ in $F_1^{*}$, such that $\chi_b(\widehat G[Ext_{T \ra S}(T')]) \leq c(|F_1|(|F_1|+1))^{2(|F_1|-|T|)}$.
		\end{quote}
		
		The base of induction will be $T=F_1$, then assuming it is true for $T$-pieces on $i$ vertices, we prove that the statement is true for $T$-pieces on $i-1$ vertices. At the end of the inductive process we conclude with $T$ being the empty piece. In this case the set $S$ has a single vertex and the set of $S$-extenders is all the vertices of $\widehat G$. This will give the desired result.
		
		For the base of induction, assume $T=F_1$ and fix an $F_1$-piece $(T',f)$. The only extension of $F_1$ inside $F_1^{*}$ is $F_1^{*}$ itself. The $F_1^{*}$-extenders of $(T',f)$ is the set of vertices disjoint from $F_1'$. Since $\widehat G$ doesn't contain an induced copy of $F_1 + F_2$, this set of extenders doesn't contain and induced copy of $F_2$ and, by the assumption on $F_2$, it has balanced chromatic number bounded by $c = c(|F_1|(|F_1|+1))^{2(|F_1|-|F_1|)}$.
		\vspace{5px}
		
		For the inductive part, assume the claim holds for every subforest of $F_1$ on at least $i$ vertices. Consider an induced subforest $T$ of $F_1$ on $i-1$ vertices and let $(T',f)$ be a $T$-piece in $\widehat G$. Consider a 1-extension $S$ of $T$ inside $F_1^{*}$ with $V(S) = V(T)\cup \set s$ and $s\neq h_*$, noting that we have a choice for $s$ because $T$ has less vertices than $F_1$. If $\chi_b(\widehat G[Ext_{T \ra S}(T')]) \leq c(|F_1|(|F_1|+1))^{2(|F_1|-|T|)}$, then we are done. Hence we assume that $\chi_b(\widehat G[Ext_{T \ra S}(T')]) > c(|F_1|(|F_1|+1))^{2(|F_1|-|T|)}$. 
		
		For any $s' \in Ext_{T \ra S}(T')$, we have an $S$-piece $(S_{s'},g)$ for which the claim applies (by the induction hypothesis).
		We get that there exists an $r_{s'}$ such that the extension $U = F_1^{*}[V(S) \cup \set{r_{s'}}] = F_1^{*}[V(T) \cup\set{s,r_{s'}}]$ verifies $\chi_b(\widehat G[Ext_{S \ra U}(S')]) \leq \ c(|F_1|(|F_1|+1))^{2(|F_1|-|S|)}$. 
		
		Note that there are less than $|F_1|$ 1-extensions of $S$ in $F_1^{*}$. Hence, there is an $r$ such that $$\chi_b(\widehat G[a \in Ext_{T\ra S}(T'), r_a = r]) > c(|F_1|(|F_1|+1))^{2(|F_1|-|T|)}/|F_1|.\ \  (**)$$ Call this subgraph $\widehat A$, fix such an $r$, and let $R = F_1^{*}[V(T) \cup\set r]$. Depending on the adjacency between $s$ and $r$, we consider two cases.
		
		\begin{itemize}
			\item {\bf Case $s \adj[] r$.} Since $\chi(\widehat A) > c$, we can find a copy $F_1'$ of $F_1$ inside $\widehat A$.
			Now consider the $R$-extenders of $T$ in $\widehat G$. $Ext_{T \ra R}(f)$ can be partitioned into: 
			\begin{itemize}
				\item The vertices disjoint from $F_1'$: This set has chromatic number bounded by $c$ because it is $F_2$-free (otherwise it would create a copy of $F_1 + F_2$ inside $\widehat G$).
				\item The vertices adjacent to a given vertex $a \in V(F_1')$: This set correspond exactly to $Ext_{S \ra U} (S',f')$. Where $V(S') = V(T) \cup \set a$ and  $f'(u) = \begin{cases}f(u) & \text{if } u \in V(T) \\  a & \text{if } u = s. \end{cases}$
				Indeed, using \text{formula $(*)$}, we have $Ext_{S \ra U}(S',f') = Ext_{T \ra S}(T',f) \cap N(f'(s)) = Ext_{T \ra S}(T',f) \cap N(a)$.
				
				By definition of $r_a = r$ we get that $\chi(\widehat G[Ext_{S \ra U} (S')]) \leq c(|F_1|(|F_1|+1))^{2(|F_1|-|S|)}$.
			\end{itemize}
			We have successfully partitioned $\widehat A$ into $(|F_1|+1)$ parts each with balanced chromatic number at most $c(|F_1|(|F_1|+1))^{2(|F_1|-|S|)}$. Therefore, $\chi_b(\widehat A) \leq c(|F_1|(|F_1|+1))^{2(|F_1|-|S|)}\times(|F_1|+1) \leq c(|F_1|(|F_1|+1))^{2(|F_1|-|T|)}/|F_1|$, contradicting ($**$).

			\item  {\bf Case $s \not\adj[] r$.} The reasoning is very similar, swapping adjacencies for non-adjacencies.
			
			Since $\chi_b(\widehat A) > c$, we can find a copy $F_1'$ of $F_1$ inside $\widehat A$.
			Now consider the $R$-extenders of $T$ in $\widehat G$. $Ext_{T \ra R}(T')$ can be partitioned into:     	
			\begin{itemize}
				\item The vertices joined to all of $F_1'$ : This set has chromatic number bounded by $c$ by the statement of the lemma.
				\item The vertices non-adjacent to a given vertex $a \in V(F_1')$ : This set corresponds exactly to $Ext_{S \ra U}(S',f')$. Where $V(S') = V(T) \cup \set a$ and $V(S')= V(T')\cup\set{a},\ $ 
				
				and $f'(u) = \begin{cases}f(u) & \text{if } u \in V(T) \\  a & \text{if } u = s. \end{cases}$ By definition of $r_a=r$, we get that this set has balanced chromatic number at most $c(|F_1|(|F_1|+1))^{2(|F_1|-|S|)}$.
			\end{itemize}
			We have successfully partitioned $\widehat A$ into $(|F_1|+1)$ parts each with balanced chromatic number at most $c(|F_1|(|F_1|+1))^{2(|F_1|-|S|)}$. Therefore, $\chi_b(\widehat A) \leq c(|F_1|(|F_1|+1))^{2(|F_1|-|S|)}\times(|F_1|+1) = c(|F_1|(|F_1|+1))^{2(|F_1|-|T|)}/|F_1| $ which contradicts ($**$).
		\end{itemize}			
		
		See \Cref{fig:magic} for an illustration of the proof.
	\end{proof}
	
	\begin{theorem}\label{thm:K4_P4_locally_bounded}
		The class $\F\{(K_3,-), (K_4,M), P_4\}$ has balanced chromatic number at most 6. 
	\end{theorem}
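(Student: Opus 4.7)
The plan is to prove $\chi_b(\widehat G)\le 6$ by strong induction on $|V(G)|$, exploiting that $G$ is $P_4$-free and hence a cograph. The base case (a single vertex) is trivial, and the disjoint-union case is immediate: if $G = G_1 \cup G_2$ then $\chi_b(\widehat G) = \max(\chi_b(\widehat G_1), \chi_b(\widehat G_2))$, bounded by $6$ by the inductive hypothesis.

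In the full-join case $G = G_1 \bowtie G_2$, if both $\widehat G_1$ and $\widehat G_2$ contain a negative edge, then \Cref{cor:FullJoin} gives $\chi_b(\widehat G) \leq 6$ immediately. Otherwise we may assume $\widehat G_2$ has no negative edge, so $V(G_2)$ is a single balanced set. If moreover $\widehat G_1$ is entirely positive, then the negative edges of $\widehat G$ all lie between $V(G_1)$ and $V(G_2)$ and so form a bipartite subgraph; thus $\chi(\widehat G^-)\le 2$ and hence $\chi_b(\widehat G)\le 2$.

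If $\widehat G_1$ does contain negative edges, the strategy is to seek a different full-join decomposition of $G$ that places negative edges on both sides so that \Cref{cor:FullJoin} can be re-applied. When $G_1$ itself admits a join decomposition $G_1 = G_1^a \bowtie G_1^b$, we may re-bracket $\widehat G$ as $\widehat G_1^a \bowtie \widehat G[V(G_1^b)\cup V(G_2)]$ and, symmetrically, as $\widehat G_1^b \bowtie \widehat G[V(G_1^a)\cup V(G_2)]$; if in one of these re-bracketings both sides contain a negative edge, \Cref{cor:FullJoin} finishes. When $G_1$ is disconnected, we instead apply the inductive hypothesis to each connected component of $G_1$ and coordinate the resulting colorings along the common all-positive block $V(G_2)$.

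The hardest step is the innermost ``nested'' configuration in which every attempted re-bracketing fails, so that all negative edges of $\widehat G$ are confined to a single branch of the cograph decomposition with everything else being entirely positive. Here \Cref{cor:FullJoin} cannot be triggered, and one must construct the $6$-coloring directly. The construction should exploit the $(K_3,-)$-free hypothesis, which forces, for every $v\in V(G_2)$, the set $N_v \subseteq V(G_1)$ of its negative-crossing neighbors to be independent in $\widehat G_1^-$, together with the $(K_4,M)$-free hypothesis, which restricts the bipartite pattern of negative crossings. Combined with the cograph structure, these constraints make the negative edges of $\widehat G$ sparse enough that a suitable recoloring recycles colors between $V(G_1)$ and $V(G_2)$ within a six-color budget.
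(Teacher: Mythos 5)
Your easy cases match the paper's opening moves (cograph decomposition, \Cref{cor:FullJoin} when both sides of a join carry a negative edge, the bipartite observation when neither does), but the proposal has a genuine gap exactly where the real work lies: the case where one side of the join is negative-edge-free while the other side's negative subgraph needs many colors, with negative \emph{crossing} edges allowed between the two sides. Your treatment of this case ("re-bracket the join", "coordinate colorings along the all-positive block", and finally "the constraints make the negative edges sparse enough that a suitable recoloring recycles colors within a six-color budget") is an aspiration, not an argument. Note that what must be shown is not a sparsity statement at all: the obstruction is a negative crossing edge from the all-positive side into a part whose negative subgraph is already $6$-chromatic, which would force a seventh color. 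The paper rules this out structurally. It takes a maximal set $A'$ containing the positive side that induces no negative edge and such that every component of $G\setminus A'$ is a module; it shows (via maximality and \Cref{lem:FullJoin}) that any component $H_i$ with $\chi(\widehat H_{i}^-)\ge 6$ splits as a join $H_i'\bowtie H_i''$ with both $\widehat H_i'^-$ and $\widehat H_i''^-$ exactly $3$-chromatic; and then, using an odd cycle in one half together with the $(K_4,M)$-free hypothesis (the configuration $x,v,u,w$), it proves there is \emph{no} negative edge at all between $A'$ and such a component. Only then can $A'$ receive a single color reused inside the $6$-chromatic components, while $5$-colorable components avoid that color. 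None of this is present, or replaced by an alternative, in your sketch.

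Two further points. First, your re-bracketing step does not exhaust the hard case: even when every regrouping fails to put internal negative edges on both sides, negative crossing edges from the positive part can still reach arbitrarily deep into the other branch, and handling them is precisely the module/maximality machinery above; "confined to a single branch with everything else entirely positive" mischaracterizes the difficult configuration. Second, your induction is phrased on $\chi_b(\widehat G)\le 6$, but the quantity that composes across joins and across the coordination step is the chromatic number of the negative subgraph; the paper deliberately proves the stronger statement $\chi(\widehat G^-)\le 6$, since a balanced $6$-coloring of one part does not combine with a single class on the all-positive block across negative crossing edges. As written, the proposal would need both the stronger inductive statement and a complete argument for the exclusion of negative edges toward $6$-chromatic components before it could be accepted.
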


	\begin{proof}
		In fact, we will prove a stronger statement that, for each element $\widehat{G}$ of $\F\{(K_3,-), (K_4,M), P_4\}$, the negative edges of $\widehat{G}$ induce a 6-colorable graph.
		
		Consider a signed graph $\widehat{G}$ in $\F\{(K_3,-), (K_4,M), P_4\}$. As all signatures on $P_4$ are switching equivalent, the underlying graph $G$ of $\widehat{G}$ has no induced $P_4$, in other words $G$ is a cograph. A classic characterization of $P_4$-free graphs is that any $P_4$-free graph $G$, except $K_1$, has nontrivial disjoint subgrpahs $G_1$ and $G_2$ such that either $G=G_1 \bowtie G_2$ or $G=G_1+G_2$. 
		
		We may assume that $G$ is connected, as otherwise we may work with each component separately. Hence, there is partition of $V(G)=A\cup B$ such that $G=G[A]\bowtie G[B]$. If each of $\widehat{G}[A]_-$ and $\widehat{G}[B]_-$ is a 3-colorable subgraph of $\widehat{G}_-$, then $\widehat{G}_-$ is 6-colorable and we are done. Thus, we assume one of them, say $\widehat{G}[B]_-$, is not 3-colorable. Then, by \Cref{lem:FullJoin}, $\widehat{G}[A]$ has no negative edge.

		Let $A'$ be a maximal set of vertices containing $A$ satisfying the following conditions: 
		\begin{enumerate}
			\item It induces no negative edge,
			\item Each connected component of $G\setminus A'$ is a module, that is to say: each vertex outside a component $X$ is either adjacent to all of the vertices of $X$ or to none of them.
		\end{enumerate}  Let $\widehat{H}_{1}, \widehat{H}_{2}, \ldots, \widehat{H}_{\ell}$ be the connected components of $\widehat{G}\setminus A'$. 
		Our goal is tho present a $6$-coloring of $\widehat{G}_-$ where all of the vertices in $A'$ are colored with the same color. To achieve this goal, an $\widehat{H}_i$ satisfying $\chi(\widehat{H}_i) \leq 5$ poses no problem. Thus we assume $\chi(\widehat{H}_{i-})\geq 6$ for each $i$.
		Since each $H_i$ is a connected $P_4$-free graph, it is a full join of two subgraphs, say $H_i'$ and $H_i''$.  
		We first claim that each of $\widehat{H}_{i-}'$ and $\widehat{H}_{i}''^-$ is 3-chromatic.
		
		To that end, it is enough to show that each of them is 3-colorable.
		By symmetry, suppose $\widehat{H}_{i}'$ is not 3-colorable. Let $A'_i$ be the subset of vertices in $A'$ which are adjacent to every vertex in $H_i$. Applying \Cref{lem:FullJoin} to the subgraph induced by $H_{i}$ and $A'$, we conclude that the set $A_i'\cup V(H_{i}'')$ induces no negative edge. We now consider $A''=A'\cup V(G_{i}'')$. As we observed, $A''$ induces no negative edge. The components of $G\setminus A''$ are either $\widehat H_j$, $j\neq i$, or the components of $H_{i}'$. Thus each of them is a module in $G$. But this contradicts the maximality of $A'$, which proves our claim.

		Next, we claim that that there is no negative edge connecting $A_i'$ to an $\widehat H_i$.
		Consider an arbitrary vertex $x$ of $A'$. Since $G$ induces no $(K_3,-)$, the negative neighborhood of $x$ in $\widehat H_i$, denoted $N_i^-(x)$, is an independent set of $\widehat{H}_{i-}$. Thus $\widehat{H}_{i-} \setminus N_i^-(x)$ is of chromatic number $5$ or $6$. In either case at least one of the two subgraphs, $\widehat{H}_{i-}'\setminus N_i^-(x)$ or $\widehat{H}_{i-}''\setminus N_i^-(x)$, is of chromatic number 3, that is to say it induces an odd cycle. Suppose $\widehat{H}_{i-}' \setminus N_i^-(x)$ induces an odd cycle $C$ and let $v$ be a vertex in $V(\widehat{H}_{i-}'')\cap N_i^-(x)$ (assuming it exists). Since there is no $(K_3,-)$, $N_i^-(v) \cap C$ is an independent set and since $C$ is odd, there is an edge $uw$ of $C$ which has no negative connection to $v$. Then those edges must be positive because $v$ is fully joined to $C$. But then the vertices $x$, $v$, $u$, and $w$ induce a $(K_4,M)$, we have a contradiction, which is shown in \Cref{fig:K4_P4_locally_bounded}. This implies that $x$ has no negative edge connecting it to $\widehat{H}_{i-}''$, but then noting that $\chi(\widehat{H}_{i-}'')=3$ and taking $C$ to be an odd cycle in $\widehat{H}_{i-}''$, we can repeat the same argument to conclude that $x$ has no negative connection to $\widehat{H}_{i-}'$ either. 
		
		In conclusion, the negative edges of each component of $\widehat{G}\setminus A'$ either induces a 5-colorable graph in which case we use colors $1,2, 3, 4,5$ to color it, or it induces a 6-chromatic graph with no negative edge connecting it to $A'$. In this case we use colors $1,2,3,4,5,6$ to color it. Finally we can use color 6 on the vertices of $A'$, concluding with a 6-coloring of $\widehat{G}^-$.
		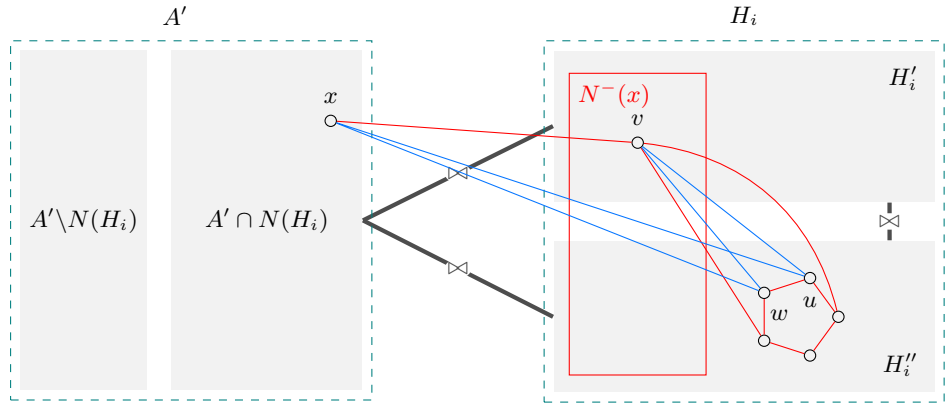
\begin{figure}[h!] \centering 
			\begin{tikzpicture}[scale=0.6, box/.style={draw=none, fill=gray!10, thick}, every node/.style={font=\small} ]

 %left blocks
\node[box, minimum width=1.6cm, minimum height=4.5cm] (A1) { $A'\backslash N(H_i)$ }; 
\node[box, minimum width=2.5cm, minimum height=4.5cm, right=0.3cm of A1] (A2) { $A'\cap N(H_i)$};

% right blocks
\node[box, minimum width=5cm, minimum height=2cm, right=2.5cm of A2.north east, anchor=north west] (H1) {};
\node[box, minimum width=5cm, minimum height=2cm, below=0.5cm of H1] (H2) {};
;

% H'_i
\node[anchor=north east, xshift=-0.1cm, yshift=-0.1cm] at (H1.north east) {$H'_i$};

%H''_i
\node[anchor=south east, xshift=-0.1cm, yshift=0.1cm] at (H2.south east) {$H''_i$};

% thick lines for joint
\draw[ultra thick, black!70!white, rounded corners=3pt] 
    (A2.east) -- (H2.west)
    node[midway, fill=white, inner sep=1pt] {$\bowtie$};
\draw[ultra thick, black!70!white, rounded corners=3pt] 
    (A2.east) -- (H1.west)
    node[midway, fill=white, inner sep=1pt] {$\bowtie$};
   \draw[ultra thick, black!70!white, rounded corners=3pt] 
    ([xshift=-1cm]H1.south east) -- ([xshift=-1cm]H2.north east)
    node[midway, fill=white, inner sep=1pt] {$\bowtie$}; 

\node[draw=teal,dashed,fit=(A1)(A2)] {}; 
\node[draw=teal,dashed,fit=(H1)(H2)] {};

\node[above=0.2cm] at ($(A1.north)!0.5!(A2.north)$) {$A'$};
\node[above=0.2cm of H1] {$H_i$}; 

% red block in H1
\node[draw=red, fill=none, rectangle, minimum width=1.8cm, minimum height=4cm, xshift=-1.4cm, yshift=-1.3cm] (RedBlock) at (H1){};

\node[anchor=north west, xshift=0mm, yshift=0mm] at (RedBlock.north west)  {{\textcolor{red}{$N^-(x)$}}};

\node[circle, draw=black, fill=none, inner sep=1.5pt] (v) at ([xshift=0cm,yshift=1.8cm]RedBlock) {};
\node[above=1pt of v] {$v$};

\node[circle, draw=black, fill=none, inner sep=1.5pt] (x) at ([xshift=1.4cm,yshift=2.2cm]A2) {};
\node[above=1pt of x] {$x$};

% N+(v)
\node[draw=none, fill=none, thick, ellipse,
      minimum width=3.5cm, minimum height=1.5cm,
      xshift=0.7cm] (BlueBlock) at (H2){};
      \node[anchor=south east, xshift=4mm, yshift=-2mm] at (BlueBlock.south east) {\textcolor{blue}{}};

\foreach \i in {1,...,5} {
  \node[circle, draw=black, fill=none, inner sep=1.5pt] (c\i) 
        at ([shift={(72*\i:0.9cm)}] BlueBlock) {};
}
\foreach \i [evaluate=\i as \j using {int(mod(\i,5)+1)}] in {1,...,5} {
  \draw[edge red] (c\i) -- (c\j);
}
\node[below=1pt of c1] {$u$};
%\node[below=0.5pt of c2] {$w$};
\node[below=0pt of c2, xshift=2mm, yshift=-0.2mm] {$w$};

\draw[edge red](x)--(v);
\draw[edge red](c3)--(v);
\draw[blue_edge](c1)--(v)--(c2);
\draw[blue_edge](c1)--(x)--(c2);
\draw[edge red, bend left=35] (v) to (c5);

\end{tikzpicture} 
			\caption{Illustration of the proof of \Cref{thm:K4_P4_locally_bounded}} 
			\label{fig:K4_P4_locally_bounded} 
		\end{figure}		
	\end{proof}

	Applying \Cref{prop:SwitchVSColored} we have the following.
	
	\begin{corollary}\label{cor:K4P4-free}
		The class $\F\{\widehat{(K_4,-)}, P_4\}$ is a GS set.
	\end{corollary}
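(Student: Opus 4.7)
The plan is a direct combination of the three results established just above. The key observation is that \Cref{thm:K4_P4_locally_bounded} resolves \Cref{conj:K4FreeNieghborhood-2} for $k=4$ with constant $6$, and then \Cref{prop:SwitchVSColored} transfers this into \Cref{conj:K4FreeNieghborhood} for $k=4$ with constant at most $7$. Finally \Cref{thm:NeighborhoodImpliesUniversalBound} promotes this local bound on closed neighborhoods to a global bound on the whole graph.

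More concretely, I would take any $\widehat{G} \in \F\{\widehat{(K_4,-)}, P_4\}$ and any vertex $u \in V(\widehat{G})$. After switching at each neighbor of $u$ joined to it by a negative edge, every edge incident to $u$ becomes positive. The open neighborhood $\widehat{G}_u$ is an induced subgraph of $\widehat{G}$, hence contains no induced $P_4$; moreover, it contains no induced $(K_3,-)$ and no induced $(K_4,M)$, since either one, together with $u$, would produce an induced subgraph switching equivalent to $\widehat{(K_4,-)}$ (exactly as discussed before \Cref{conj:K4FreeNieghborhood-2}). Therefore $\widehat{G}_u \in \F\{(K_3,-), (K_4,M), P_4\}$, so by \Cref{thm:K4_P4_locally_bounded} it admits a balanced $6$-coloring. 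Putting $u$ alone into a seventh color class yields a balanced $7$-coloring of the closed neighborhood $N[u]$.

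Thus every vertex of $\widehat{G}$ has a closed neighborhood of balanced chromatic number at most $7$. Applying \Cref{thm:NeighborhoodImpliesUniversalBound} with $b=7$ and $k=4$ then gives
\[
\chi_b(\widehat{G}) \leq 7 \cdot 2^{4-3} = 14,
\]
and since this bound is uniform over the class, $\F\{\widehat{(K_4,-)}, P_4\}$ is a GS set.

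Because all the ingredients are already in place, there is no genuine obstacle here; the proof amounts to stitching them together. The only point worth checking carefully is the $+1$ passing from \Cref{thm:K4_P4_locally_bounded} to the bound on $N[u]$, which is immediate since a singleton set is always balanced and may safely be added as a new color class.
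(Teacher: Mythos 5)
Your proposal is correct and takes essentially the same route as the paper: its one-line proof of this corollary is precisely the chain \Cref{thm:K4_P4_locally_bounded} $\to$ \Cref{prop:SwitchVSColored} (switching at the negative neighbours of $u$ and spending one extra colour on $u$ itself) $\to$ \Cref{thm:NeighborhoodImpliesUniversalBound}, exactly as laid out in the discussion of the equivalence of \Cref{conj:K4FreeNieghborhood} and \Cref{conj:K4FreeNieghborhood-2}. Your explicit bound $7\cdot 2^{4-3}=14$ is a valid consequence of the stated results (the paper only quotes the looser figure $2^7$ in its remarks), so no gap remains.
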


	Combining this corollary with \Cref{lem:magic}, we have the following conclusion.
	
	\begin{theorem}\label{thm:linearforset}
		If each of the connected components of a linear forest $F$ has at most 4 vertices, then $\F\{\widehat{(K_4,-)}, F\}$ is a GS set. 
	\end{theorem}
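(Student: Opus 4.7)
The plan is to prove the theorem by induction on the number of components of $F$, using \Cref{lem:magic} in the inductive step and \Cref{cor:K4P4-free} for the base case. First I would reduce to the case $F = k\cdot P_4$: any linear forest whose components have size at most $4$ embeds as an induced subgraph of $k\cdot P_4$ (where $k$ is the number of components), so $\F\{\widehat{(K_4,-)}, F\} \subseteq \F\{\widehat{(K_4,-)}, k\cdot P_4\}$, and a bound on the latter transfers to the former.

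The induction is on $k$. For $k=1$, the statement is precisely \Cref{cor:K4P4-free}. For the inductive step, I would write $F = F_1+F_2$ with $F_1 = P_4$ and $F_2 = (k-1)\cdot P_4$, and apply \Cref{lem:magic} with $\mcal{G} = \F\{\widehat{(K_4,-)}\}$. Hypothesis~(i), that $\mcal{G}\cap\F\{F_1\} = \F\{\widehat{(K_4,-)},P_4\}$ is uniformly bounded, is exactly \Cref{cor:K4P4-free}. Hypothesis~(ii), that $\mcal{G}\cap\F\{F_2\} = \F\{\widehat{(K_4,-)},(k-1)\cdot P_4\}$ is uniformly bounded, is the inductive assumption. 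Once the common-neighbour hypothesis is verified, \Cref{lem:magic} yields a uniform bound on $\mcal{G}\cap\F\{F_1+F_2\} = \F\{\widehat{(K_4,-)}, k\cdot P_4\}$, closing the induction.

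The main obstacle, and the technical heart of the argument, is verifying the third hypothesis of \Cref{lem:magic}: for every $\widehat{G} \in \mcal{G}$ and every induced copy $F_1'$ of $P_4$ in $\widehat{G}$, the signed subgraph induced by the common neighbours of $F_1'$ should have balanced chromatic number bounded by an absolute constant. I would attempt to leverage the base case by showing that this common-neighbour subgraph lies in $\F\{\widehat{(K_4,-)}, P_4\}$ and is therefore bounded by $6$ via \Cref{cor:K4P4-free}. The argument amounts to a careful case analysis of induced $K_4$'s spanned by two vertices of $F_1'$ and two common neighbours: after normalising the signature so that edges from a fixed common neighbour to $F_1'$ are positive, a hypothetical induced $P_4$ inside the common-neighbour set together with consecutive vertices of $F_1'$ produces $K_4$'s whose signatures are constrained to avoid the switching class of $(K_4,-)$ (equivalently, $(K_4,M)$ in the sense of \Cref{prop:SwitchVSColored}), and chasing these constraints is intended to preclude such a $P_4$. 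This structural step, which reduces the common-neighbour problem back to \Cref{cor:K4P4-free}, is the delicate point on which the whole induction rests.
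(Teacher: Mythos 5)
Your overall architecture --- reduce to $F=k\cdot P_4$, induct on the number of components, use \Cref{cor:K4P4-free} as the base case and \Cref{lem:magic} for the inductive step --- is exactly the combination the paper invokes. But the step you yourself identify as the heart of the matter fails. The common neighbourhood of an induced $P_4$ in a $\widehat{(K_4,-)}$-free signed graph need \emph{not} lie in $\F\{\widehat{(K_4,-)},P_4\}$: take two disjoint paths $p_1p_2p_3p_4$ and $q_1q_2q_3q_4$, join every $p_i$ to every $q_j$, and declare all edges positive. This signed graph is balanced, hence trivially has no induced $\widehat{(K_4,-)}$, and it does not even contain an induced $P_4+P_4$ (the two sides are completely joined); yet the set of common neighbours of the induced path $p_1p_2p_3p_4$ is precisely $\{q_1,q_2,q_3,q_4\}$, which induces a $P_4$. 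So the reduction of the common-neighbour condition back to \Cref{cor:K4P4-free} via ``the common-neighbour subgraph is $P_4$-free'' cannot be repaired by sign-chasing on $K_4$'s: in an all-positive graph there are no negative triangles to chase.

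There is a second, related problem with how you quantify hypothesis~(iii). As stated (for \emph{every} $\widehat G\in\mcal G=\F\{\widehat{(K_4,-)}\}$) the hypothesis is false: let $H$ be triangle-free of arbitrarily large chromatic number, and fully join an all-positive $P_4$ to $(H,-)$ by positive edges. Every $K_4$ of this graph uses two consecutive path vertices and thus contains a positive triangle, so the graph is $\widehat{(K_4,-)}$-free, while the common neighbourhood of the path is $(H,-)$, whose balanced chromatic number is unbounded by \Cref{lem:no_switch}. Any correct verification must therefore exploit the fact that the proof of \Cref{lem:magic} only invokes the common-neighbour condition for graphs that are additionally $(F_1+F_2)$-free (here $k\cdot P_4$-free); your plan never uses this, and even within that restricted class the first example above shows your specific structural claim is false. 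For comparison, the paper intends \Cref{lem:magic} to be applied at the $2$-edge-coloured level, with $\mcal G=\F\{(K_3,-),(K_4,M)\}$ together with \Cref{thm:K4_P4_locally_bounded}, and then lifted through \Cref{prop:SwitchVSColored}, rather than directly with the switching-closed class $\F\{\widehat{(K_4,-)}\}$; the verification of the common-neighbour hypothesis is delicate there as well (and is not spelled out in the text), but in your write-up the argument offered for it is concretely wrong, which leaves the inductive step unproved.
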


	For each of the three results \Cref{thm:K4_P4_locally_bounded}, \Cref{cor:K4P4-free}, \Cref{thm:linearforset}, we do not know what is the best possible upper bound. In \Cref{thm:K4_P4_locally_bounded} we provided an upper bound of 6, but in fact this was the upper bound on the chromatic number of the subgraph induced by the negative edges. While we do not know if 6 is the best bound for the balanced chromatic number of the family, in the following we provide an example for which the negative edges induces a 6-chromatic graph. Applying this upper bound of 6, for  \Cref{cor:K4P4-free} we get an upper bound $2^7$. For \Cref{thm:linearforset}, the upper bound is a function of the number of components of $F$. 
	
	\begin{proposition}
		There is a signed graph in $\F((K_3,-), (K_4,M), P_4)$ whose negative edges induce a graph of chromatic number 6.
	\end{proposition}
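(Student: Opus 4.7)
The plan is to exhibit an explicit signed graph $\widehat{G}$ in $\F((K_3,-),(K_4,M),P_4)$ with $\chi(\widehat{G}^-)=6$, matching the upper bound of \Cref{thm:K4_P4_locally_bounded}. Since $\chi(\widehat{G}^-)\le 6$ is already guaranteed by \Cref{cor:FullJoin}, only a matching construction has to be produced. Note that $\widehat{G}^-$ must be triangle-free (as induced $(K_3,-)$ is forbidden), so by the classical triangle-free chromatic lower bounds the construction must have more than $10$ vertices.

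I would take $\widehat{G}$ to be a full join $\widehat{G}_1 \bowtie \widehat{G}_2$ of two signed cographs, each built around a copy of the rigid piece $(K_5,C_5)$ so that $\chi(\widehat{G}_i^-)\ge 3$. Since the join of cographs is a cograph, $P_4$-freeness is automatic. The sign assignment on the cross edges is then constrained by two conditions: (i) for every negative edge $uv$ the common negative neighborhood $N^-(u)\cap N^-(v)$ must be empty (otherwise an induced $(K_3,-)$ appears), which forces the cross-negative neighborhood of a vertex on one side, viewed inside the $C_5$ of the other side, to be an independent set; and (ii) for every pair of vertex-disjoint negative edges whose four endpoints span a $K_4$ of the underlying graph, at least one of the remaining four edges of this $K_4$ is negative (otherwise an induced $(K_4,M)$ appears), which sharpens (i) by requiring each such cross-neighborhood to be either empty or a distance-$2$ pair $\{m,m+2\}$ in the $C_5$. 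A direct combinatorial check then exhibits a cross-edge assignment meeting both conditions.

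The decisive step is the lower bound $\chi(\widehat{G}^-)\ge 6$: I would argue that the cross-negative edges have been chosen so that in any proper coloring of $\widehat{G}^-$ the color set used on $V(\widehat{G}_1)$ is forced to be disjoint from the one used on $V(\widehat{G}_2)$, giving $3+3=6$ colors. The main obstacle is that a naive construction on $10$ vertices, namely $(K_5,C_5)\bowtie(K_5,C_5)$, cannot work: no triangle-free graph on $10$ vertices has chromatic number above $3$, and indeed one can write down a valid $3$-coloring of such a negative subgraph (each vertex on one side has at most two forbidden colors from its cross-neighbors, leaving a third one available). To overcome this, I would enlarge each $\widehat{G}_i$ by attaching auxiliary vertices whose cross-negative neighborhoods collectively rule out every $5$-coloring of the combined negative subgraph, and close the argument with a case analysis comparing the $3$-colorings of the two base $C_5$'s.
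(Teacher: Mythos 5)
Your preparatory analysis is mostly right: the upper bound does follow from \Cref{thm:K4_P4_locally_bounded}, the negative subgraph must be triangle-free, a join of cographs keeps $P_4$-freeness, and the per-vertex constraint that negative cross-neighborhoods be independent in the opposite $C_5$ is correct, as is the observation that no $10$-vertex construction can work. But already at this level there is a problem with your choice of base piece: in $(K_5,\,C_5$ negative$)$ there are triangles with two positive edges (e.g.\ $v_0v_2$, $v_2v_4$ positive and $v_0v_4$ negative), and any vertex on the other side that is negative to the apex $v_2$ and positive to $v_0,v_4$ creates an induced $(K_4,M)$. So the "direct combinatorial check" of a feasible cross-assignment is not routine; this is precisely why the paper replaces $K_5$ by the \emph{envelope} (the $C_5$ with only the two chords $v_1v_3$, $v_2v_4$ added as positive edges), a cograph with no triangle having two positive edges, which makes the $(K_3,-)$/$(K_4,M)$ verification manageable.

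The genuine gap is the lower bound, which is the whole content of the proposition. "Enlarge each $\widehat{G}_i$ by attaching auxiliary vertices whose cross-negative neighborhoods collectively rule out every $5$-coloring, and close with a case analysis" is a statement of intent, not a proof: no gadget is specified, no argument is given that the added negative cross edges can satisfy the $(K_3,-)$/$(K_4,M)$ constraints, and no reason is offered why finitely many auxiliary vertices, each of whose negative cross-neighborhood inside a $C_5$ has at most two vertices, can exclude all $5$-colorings. Moreover your stated mechanism --- forcing the palettes of the two sides to be disjoint --- is stronger than what is needed (one only needs that no $5$-coloring exists) and you give no way to enforce it, since non-(negatively-)adjacent vertices on opposite sides can always share a color locally. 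The paper's proof handles exactly this difficulty by a diagonalization over colorings: one side $\widehat{R}$ consists of $7$ disjoint envelopes, and for \emph{every} $5$-coloring $\phi$ of $\widehat{R}^-$ a dedicated envelope is fully joined to $\widehat{R}$, its classes $\{v_0\}$, $\{v_1,v_3\}$, $\{v_2,v_4\}$ being joined negatively to three disjoint independent sets $X,Y,Z$ covering $V(\widehat R)$ which share at least three colors under $\phi$; the existence of such $X,Y,Z$ is a separate combinatorial claim about $c$-colorings of at least $2c-3$ disjoint $C_5$'s. Then any $5$-coloring restricted to $\widehat R$ leaves at most two colors available on the corresponding envelope's negative $C_5$, a contradiction. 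Without an analogue of this "one gadget per coloring" scheme (which forces one side to be large, indexed by the $5$-colorings of the other) and the supporting claim, your outline does not establish the existence of the desired signed graph.
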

	
	\begin{proof}
		The basic element of the construction is the signed graph of \Cref{fig:envelope} called \emph{envelope} and denoted $\widehat{EN}$.

		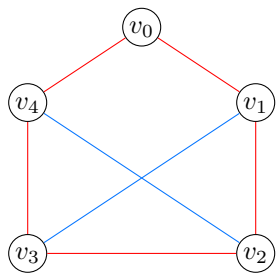
\begin{figure}[h!]
			\centering
			\begin{tikzpicture}[scale=0.5, baseline=(current bounding box.center),
				node style/.style={
					circle, draw,
					inner sep=0pt,
					minimum size=5mm,
					text height=1.2ex, text depth=.2ex % 统一高度深度
				}
				]
				
				\node[node style] (P0) at (3,6){$v_0$};
				\node[node style] (P1) at (6,4){$v_1$};
				\node[node style] (P2) at (6,0){$v_2$};
				\node[node style] (P3) at (0,0){$v_3$};
				\node[node style] (P4) at (0,4){$v_4$};

				\draw[red] (P0) -- (P1) -- (P2) -- (P3)--(P4)--(P0);
				\draw[blue_edge] (P2) -- (P4);
				\draw[blue_edge] (P1) -- (P3);
			\end{tikzpicture}
			\caption{The envelope}
			\label{fig:envelope}
		\end{figure}

		\begin{claim}\label{lem:XYZ_from_coloring}  Given an integer $c\geq 3$, if $k \geq 2c-3$, for any $c$-coloring $\phi$ of the disjoint union of $k$ 5-cycles, there exist three disjoint independent sets $X,Y$ and $Z$ such that $$ |\phi(X)\cap \phi(Y) \cap \phi(Z)| \geq 3.$$
		\end{claim}
		{\em Proof of the claim.}   
		We first claim that there are three colors, say $a$, $b$, $c$, such that each color appears in at least three of the $C_5$'s. If not, after removing the two colors that appear in the most $C_5$'s, each $C_5$, being 3-chromatic,  has a color that is shared with at most one other $C_5$. Thus the number of colors, $c$, is at least $\lceil \frac{k}{2}\rceil+2$, contradicting the assumption that $k \geq 2c-3$.
		
		Let $A$ be a set of three vertices from three distinct $C_5$ all colored $a$. Similarly we choose $B$ and $C$ each consisting of three vertices, colored $b$ and $c$ respectively. We now consider the incidence graph between $\set{A, B, C}$ and the $C_5$'s.
		Observe that this is a bipartite graph of maximum degree 3. Thus it admits a 3-edge-coloring. Let $X$, $Y$, $Z$ be the three color classes in one such coloring. One can view each of $X$, $Y$, and $Z$ as a 3-subset of the vertices of $C_5$'s, spanning the tree colors $a,b,c$ and coming from different $C_5$'s. Thus $X$, $Y$, $Z$ are disjoint independent sets and $|\phi(X)\cap \phi(Y) \cap \phi(Z)| \geq 3$. \hfill $\diamond$ \\    
		
		We want to construct a signed graph $\widehat{G}$ in ${\cal G} = \F((K_3,-), (K_4,M), P_4)$ such that the subgraph $\widehat{G}_{-}$ has chromatic number $6$. To that end we work with red-independent sets that are independent sets of $\widehat{G}_{-}$. 
		First observe that the envelope is in $\cal G$. Start with the signed graph $\widehat{R}$ which is the union of 7 disjoint envelopes. 
		Construct the signed graph $\widehat{L}$ as follows. For every possible $5$-coloring $\phi$ of $\widehat{R}_{-}$, add a disjoint envelope ${\widehat{EN}}_{\phi}$ in $\widehat{L}$ fully joined with $\widehat{R}$.
		Applying \Cref{lem:XYZ_from_coloring} to $\widehat{R}_{-}$, we find disjoint independent sets $X,Y,Z$ (of $\widehat{R}_{-}$). As $\widehat{R}_{-}$ is a union of disjoint $C_5$'s, hence 2-regular, each vertex not in $X\cup Y \cup Z$ can be assigned to one of the sets while they remain independent sets. Thus we may assume the $X\cup Y \cup Z=V(\widehat{R}_{-})$, and note that we still  have the property $|\phi(X) \cap \phi(Y) \cap \phi(Z)| \geq 3$.  
		
		The signs of the edges in the full join of ${\widehat{EN}}_{\phi}$ to $\widehat{R}$ are chosen as follows. The copy of $v_0$ in ${\widehat{EN}}_{\phi}$ is adjacent to every vertex in $X$ by a negative edge, copies of $v_1,v_3$ in ${\widehat{EN}}_{\phi}$ are both adjacent to every vertex in $Y$ by negative edges, and copies of $v_2,v_4$ in ${\widehat{EN}}_{\phi}$ are both adjacent to all vertices in $Z$ by negative edges. All other edges in the full join are positive.
		The resulting signed graph is called $\widehat{LR}$. 
		
		\begin{claim}
			$\widehat{LR} \in \cal G$.
		\end{claim}

		The underlying graph is obtain from a full join of disjoint unions of envelopes which are cographs. Therefore, it is itself a cograph, i.e. $P_4$-free.
		For every vertex $u$ in $V(\widehat{L})$, $N_{R}^-(u)$ does not induce a negative edge because $X$ (resp $Y$,$Z$) is an independent set in $\widehat{R}_{-}$.
		For every pair $u,v$ of vertices in $\widehat{L}$ connected by a negative edge, we have 1. $N_{R}^-(u)\cap N_{R}^-(v)= \emptyset$ 2. $N^+(u)\cap N^+(v)$ contains no negative edge. The former is simply because the three sets $X$, $Y$, and $Z$ are disjoint. The latter is because $N^+(u)\cap N^+(v) = R\setminus(N^-(u)\cup N^-(v))$ which is one of the $X,Y,Z$ and hence is an independent set in $\widehat{R}_{-}$.
		
		Those conditions, together with the fact that there is no triangle with two positive edge in an envelope, we are ensured that there is neither $(K_3,-)$ nor $(K_4,M)$ in $\widehat{LR}$.
		
		\begin{claim}
			The graph $\widehat{LR}_{-}$ has chromatic number 6.
		\end{claim}

		Suppose $\phi$ is a $5$-coloring of $\widehat{LR}_{-}$. Let $\phi_R$ be the coloring induced on $\widehat{R}_{-}$ and consider $\widehat{EN}_{\phi_R} \in \widehat{L}$ and the partition $X_{\phi},Y_{\phi},Z_{\phi}$ of $V(\widehat{R})$ associated to $\phi$.
		Since $|\phi(X) \cap \phi(Y) \cap \phi(Z)| \geq 3$, there are at least 3 colors not used on $\phi(\widehat{EN}_{\phi_R})$. But $\widehat{EN}_{\phi_R-}$ (which is a $C_5$) needs at least three more colors, which is a contradiction.
	\end{proof}
	
	\section{Conclusion}
	
	In this work, we initiated the study of balanced chromatic number on the hereditary classes of signed graphs. 
	
	The notion of balanced chromatic number generalizes the classic chromatic number of graphs. Given a graph $G$, the balanced chromatic number of the signed graph $\widetilde{G}$ is the same as the chromatic number of $G$. The class of all signed graphs of the form $\widetilde{G}$ is the hereditary class $\F{\{\widehat{(K_2,-)}\}}$. That is the class of signed graphs where no pair of vertices induces a simple edge.
	Thus the \Gya-Summer conjecture can be restated in the language of signed graphs as follows.
	
	\begin{GSconjecture}[Restated] Any minimal (finite) GS set containing $\widehat{(K_2,-)}$ has three elements.		
	\end{GSconjecture}	
	
	In this work, then, we studied minimal (finite) GS sets that do not contain $\widetilde{K}_2$. It is observed that any such set must be of order at least 3 (including $\widetilde{K}_2$). Our focus has been to classify those sets of order 3. We showed that any such set must contain either $\widehat{(K_3,-)}$ or $\widehat{(K_4,-)}$ and a linear forest. For $\widehat{(K_3,-)}$ we showed that all such sets are GS sets. In the case of $\widehat{(K_4,-)}$ we showed that, as long as each component of the linear forest is of order at most 4, we have a GS set. We conjectured that for every linear forest $F$ the set $\{\widetilde{K}_2, \widehat{(K_4,-)}, F\}$ is a GS set. 
	  
	Thus, unlike the case where $\widehat{(K_2, -)}$ is forbidden, in the case where $\widetilde{K}_2$ is forbidden, we already have minimal GS sets of order at least 4, and perhaps we can build minimal GS sets of larger size. In a follow-up work we study minimal GS sets of order 4.

	\section*{Acknowledgment} This work has received support under the program ``Investissement d'Avenir" launched by the French Government and implemented by ANR, with the reference ``ANR‐18‐IdEx‐0001" as part of its program ``Emergence".
	
	\bibliographystyle{plain}
	\bibliography{RefencesGS}	
	
\end{document}